\tikzset{snake it/.style={decorate, decoration=snake}}
\theoremstyle{plain}
\newtheorem{thm}{Theorem}[section]
\newtheorem{cor}[thm]{Corollary}
\newtheorem{lem}[thm]{Lemma}
\newtheorem{prop}[thm]{Proposition}
\newtheorem{conj}[thm]{Conjecture}
\theoremstyle{definition}
\theoremstyle{remark}
\newtheorem{rmk}[thm]{Remark}
\newcommand{\BA}{{\mathbb{A}}}
\newcommand{\BC}{{\mathbb{C}}}
\newcommand{\BD}{{\mathbb{D}}}
\newcommand{\BH}{{\mathbb{H}}}
\newcommand{\BN}{{\mathbb{N}}}
\newcommand{\BP}{{\mathbb{P}}}
\newcommand{\BQ}{{\mathbb{Q}}}
\newcommand{\BT}{{\mathbb{T}}}
\newcommand{\BZ}{{\mathbb{Z}}}
\newcommand{\CC}{{\mathcal C}}
\newcommand{\CE}{{\mathcal E}}
\newcommand{\CF}{{\mathcal F}}
\newcommand{\CH}{{\mathcal H}}
\newcommand{\CJ}{{\mathcal J}}
\newcommand{\CK}{{\mathcal K}}
\newcommand{\CL}{{\mathcal L}}
\newcommand{\CM}{{\mathcal M}}
\newcommand{\CO}{{\mathcal O}}
\newcommand{\CP}{{\mathcal P}}
\newcommand{\CS}{{\mathcal S}}
\newcommand{\CX}{{\mathcal X}}
\newcommand{\CY}{{\mathcal Y}}
\newcommand{\CZ}{{\mathcal Z}}
\newcommand{\sslash}{\mathbin{/\mkern-6mu/}}
\DeclareFontFamily{OT1}{rsfs}{}
\DeclareFontShape{OT1}{rsfs}{n}{it}{<-> rsfs10}{}
\DeclareMathAlphabet{\curly}{OT1}{rsfs}{n}{it}
\newcommand{\Coh}{\mathrm{Coh}}
\begin{document}
\title[Cohomological $\chi$-independence]{Cohomological $\chi$-independence for moduli of one-dimensional sheaves and moduli of Higgs bundles}
\date{\today}

\author[D. Maulik]{Davesh Maulik}
\address{Massachusetts Institute of Technology}
\email{maulik@mit.edu}

\author[J. Shen]{Junliang Shen}
\address{Yale University}
\email{junliang.shen@yale.edu}

\begin{abstract}
We prove that the intersection cohomology (together with the perverse and the Hodge filtrations) for the moduli space of one-dimensional semistable sheaves supported in an ample curve class on a toric del Pezzo surface is independent of the Euler characteristic of the sheaves. We also prove an analogous result for the moduli space of semistable Higgs bundles with respect to an effective divisor $D$ of degree $\mathrm{deg}(D)>2g-2$. Our results confirm the cohomological $\chi$-independence conjecture by Bousseau for $\BP^2$, and verify Toda's conjecture for Gopakumar--Vafa invariants for certain local curves and local surfaces.

For the proof, we combine a generalized version of Ng\^o's support theorem, a dimension estimate for the stacky Hilbert--Chow morphism, and a splitting theorem for the morphism from the moduli stack to the good GIT quotient.
\end{abstract}

\maketitle

\setcounter{tocdepth}{1} 

\tableofcontents
\setcounter{section}{-1}

\section{Introduction}

We work over the complex numbers $\BC$.

\subsection{Cohomological $\chi$-independence}\label{Sec0.1} Let $C$ be a nonsingular irreducible projective curve of genus $g \geq 2$. The moduli space $N_{n,\chi}$ of (slope-)semistable vector bundles $\CE$ with
\[
\mathrm{rank}(\CE) = n, \quad \chi(\CE) = \chi
\]
is an irreducible projective variety whose topology has been studied intensively for decades. When we fix the rank $n$, tensor product and duality induce natural isomorphisms between the moduli spaces indexed by different Euler characteristics (or degrees)
\begin{equation}\label{0.1_1}
N_{n,\chi} \simeq N_{n, \chi+n}, \quad N_{n,\chi} \simeq N_{n, (2-2g)n-\chi}.
\end{equation}
Under the assumption $\mathrm{gcd}(n,\chi)=1$ so that the moduli spaces $N_{n,\chi}$ are nonsingular, Harder--Narasimhan proved in  \cite[Theorem 3.3.2]{HN} that the Poincar\'e polynomials of $N_{n,\chi}$ are distinct unless the moduli spaces are related via (\ref{0.1_1}).  

In this paper, we are interested in moduli spaces where the cohomological information does \emph{not} depend on the Euler characteristic $\chi$. More precisely, we consider the following two types of moduli spaces $M^L_{\beta,\chi}$ and $\widetilde{M}_{n,\chi}$:
\begin{enumerate}
    \item[(A)] $M^L_{\beta, \chi}$ is the moduli space of 1-dimensional semistable sheaves $\CF$ with 
    \[
    [\mathrm{supp}(\CF)] = \beta, \quad \chi(\CF) =\chi
    \]
    on a nonsingular \emph{toric} del Pezzo surface $S$. Here the semistability is with respect to a polarization $L$ on $S$, $\mathrm{supp}(-)$ denotes the Fitting support, and $\beta$ is an ample curve class.
    \item[(B)] $\widetilde{M}_{n,\chi}$ is the moduli space of semistable Higgs bundles $(\CE, \theta)$ with respect to an effective divisor $D$ of degree $\mathrm{deg}(D)>2g-2$ on $C$ with 
    \[
    \mathrm{rank}(\CE) = n,\quad \chi(\CE) = \chi.
    \]
\end{enumerate}
We refer to Section \ref{Sec2} for more details on these moduli spaces.   When $\chi$ is chosen so that there are no strictly semistable objects, the moduli spaces $M^L_{\beta,\chi}$ and $\widetilde{M}_{n,\chi}$ are nonsingular, and we consider their singular cohomology.  However, for arbitrary values of $\chi$, these moduli spaces can be singular, due to the presence of strictly semistable objects.  In this case, it is more natural for us to study their \emph{intersection cohomology}.  Our main result states that, unlike the case of curves, the intersection cohomology of these spaces is independent of the choice of $\chi$:

\begin{thm}\label{thm0}
For any $\chi, \chi' \in \BZ$, there are isomorphisms of graded vector spaces
\[
\mathrm{IH}^*(M^L_{\beta, \chi}) \simeq \mathrm{IH}^*(M^L_{\beta, \chi'}), \quad \mathrm{IH}^*(\widetilde{M}_{n, \chi}) \simeq \mathrm{IH}^*(\widetilde{M}_{n, \chi'}),
\]
where $\mathrm{IH}^*(-)$ denotes the intersection cohomology.  Moreover, these isomorphisms respect perverse and Hodge filtrations carried by these vector spaces. 
\end{thm}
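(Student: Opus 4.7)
The plan is to realize each moduli space as the total space of an ``abelian fibration'' over a natural affine base --- the Chow variety $B = \Chow_\beta(S)$ in case (A), and the Hitchin base $\CA = \bigoplus_{i=1}^n H^0(C, \CO(iD))$ in case (B) --- via the Fitting support map (respectively, the Hitchin map) $h_\chi\colon M_\chi \to B$. Over the open locus $U \subset B$ parametrizing smooth integral curves, the fiber of $h_\chi$ is a torsor under $\Pic^0$ of that curve, so $(Rh_{\chi,*}\mathrm{IC}_{M_\chi})|_U$ is manifestly $\chi$-independent as a filtered perverse sheaf carrying its Hodge structure. The theorem will follow once we show that $Rh_{\chi,*}\mathrm{IC}_{M_\chi}$ is recovered from this restriction by intermediate extension along $j \colon U \hookrightarrow B$.

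To propagate the $U$-statement over all of $B$, I would apply the BBD decomposition theorem to $h_\chi$ together with a generalized Ng\^o-type support theorem to force every simple perverse summand of $Rh_{\chi,*}\mathrm{IC}_{M_\chi}$ to have full support equal to $B$. Ng\^o's original result requires $\delta$-regularity of a smooth abelian fibration, which fails here because the fibers degenerate along the locus of non-integral curves and, more seriously, at strictly semistable points where $M_\chi$ itself is singular. Accordingly, I would work at the stack level with the ``stacky Hilbert--Chow'' (resp.\ Hitchin) morphism $\FM_\chi \to B$ from the full moduli stack of semistable objects. The key new input is a dimension estimate on the fibers of this stacky map, showing that they satisfy the codimension inequality needed to rule out summands supported on proper strata of $B$ --- in effect, a replacement for $\delta$-regularity adapted to the presence of strictly semistable objects.

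To transfer this support statement from the stack $\FM_\chi$ down to the good GIT quotient $M_\chi$, I would establish a splitting theorem identifying $\mathrm{IC}_{M_\chi}$ as a direct summand, compatibly with both the perverse and Hodge filtrations, of the pushforward $R\pi_* \BBC_{\FM_\chi}$ along the good moduli map $\pi\colon \FM_\chi \to M_\chi$. Combining the stacky dimension estimate, the generalized support theorem, and this splitting yields an identification $Rh_{\chi,*}\mathrm{IC}_{M_\chi} \cong j_{!*}\bigl((Rh_{\chi,*}\mathrm{IC}_{M_\chi})|_U\bigr)$. Taking hypercohomology then produces $\mathrm{IH}^*(M_\chi)$ with its perverse and Hodge filtrations as the output of a construction depending only on $U \subset B$, giving the desired isomorphisms in both cases (A) and (B).

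The main obstacle will be the generalized support theorem itself. Ng\^o's argument relies crucially on polarizability and $\delta$-regularity of a smooth abelian fibration, neither of which survives over non-integral curves or at strictly semistable points; the bulk of the work therefore consists of proving the dimension estimate for the stacky map $\FM_\chi \to B$ strongly enough that the stack-to-GIT splitting converts it into the classical support conclusion for $h_\chi$, and in doing so compatibly with the perverse and Hodge filtrations.
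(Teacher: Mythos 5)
Your proposal follows the paper's own strategy essentially verbatim: a generalized Ng\^o support theorem valid for singular total spaces and self-dual complexes, a dimension estimate for the stacky Hilbert--Chow (resp.\ Hitchin) morphism supplying the relative dimension bound, and a splitting theorem realizing $\mathrm{IC}_{M}$ as a direct summand of the pushforward from the moduli stack to the good GIT quotient, after which full support over $B$ forces $Rh_*\mathrm{IC}_M$ to be the intermediate extension of its ($\chi$-independent) restriction to $U$, and Theorem \ref{thm0} follows by taking global cohomology. The only points you elide are that the support theorem by itself only yields the inequality $\mathrm{codim}\,Z\le\delta_Z$, so one still needs the converse Severi-type $\delta$-inequality on the strata of $|\beta|$ (this is where the Fano condition $-K_S$ ample enters, \`a la Chaudouard--Laumon) to conclude that $Z=B$ is the only possibility; and that polarizability and $\delta$-regularity in fact do survive in this setting --- what genuinely fails in Ng\^o's original hypotheses is the smoothness of $M$, which is exactly what the stacky dimension estimate and the splitting theorem are designed to circumvent.
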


This phenomenon is surprising, since there is no direct geometric relationship other than those parallel to (\ref{0.1_1}) between these moduli spaces with different Euler characteristics, and the result applies to both smooth and singular moduli spaces.  For example in the case (B), the moduli space is nonsingular if and only if $\mathrm{gcd}(n,\chi) =1$.   Nevertheless, the result on intersection cohomology holds uniformly.  Regarding the second part of the theorem and compatibility with filtrations, see Theorem \ref{thm0.2} for further refinements.

Theorem \ref{thm0} proves the cohomological $\chi$-independence conjecture \cite[Conjecture 0.4.3]{PB} of the moduli space of 1-dimensional semistable sheaves on $\BP^2$, which further proves \cite[Conjecture 0.4.2]{PB2} on the BPS numbers of the log K3 surface $(\BP^2, E)$; see \cite[Theorem 0.4.5]{PB2}. Its refinement (Theorem \ref{thm0.2}) proves Toda's conjecture \cite[Conjecture 1.2]{Toda} on the Gopakumar--Vafa invariants in the cases of certain local curves and local toric del Pezzo surfaces with ample curve classes; see Theorem \ref{thm0.5}. In case (A), when $S = \BP^2$, it was proven by Bousseau \cite[Theorem 0.5.2]{PB2} that the dependence of the (intersection) Betti numbers on $\chi$ only relies on $\mathrm{gcd}(\mathrm{deg}(\beta), \chi)$, using connections with Gromov--Witten theory for the log K3 surface $(\BP^2,E)$ and scattering diagrams. In case (B), when $\mathrm{gcd}(n,\chi) =1$, the equality of Poincar\'e polynomials was proved by a direct calculation in work of Mozgovoy-Schiffmann \cite{MS} and Mellit \cite{Mellit}, as well as in Groechenig-Wyss-Ziegler \cite{GWZ} by $p$-adic integration.  We discuss connections between our theorems and enumerative geometry in Section \ref{Sec0.3} in more detail.

\begin{rmk}
By \cite{delPezzo}, a nonsingular del Pezzo surface belongs to one of the following types:
\begin{enumerate}
    \item[(a)] $\BP^2$.
    \item[(b)] $\BP^1 \times \BP^1$.
    \item[(c)] The blow-up of $\BP^2$ at $n$ very general points with $1\leq n\leq 8$.   
\end{enumerate}
Hence Theorem \ref{thm0} recovers the case when a del Pezzo surface belongs to (a,b), or (c) with $n \leq 3$. We note that the Fano condition is essential (see Section \ref{CY}), but the toric condition is due to a techinical result (Proposition \ref{prop2.5}) which we expect to hold for all del Pezzo surfaces. In other words, if the inequality (\ref{eqn_thm2.5}) of Proposition \ref{prop2.5} is proven for any del Pezzo surface $S$, then Theorem \ref{thm0} (as well as Theorem \ref{thm0.2} below) also holds for any del Pezzo surface.
\end{rmk}
\begin{rmk}
Although we will not require it further, our proof of Theorem \ref{thm0} actually provides a natural isomorphism between these spaces, well-defined up to a scalar, which is compatible with the perverse and Hodge filtrations.
\end{rmk}

\subsection{A support theorem}
Comparing to $N_{n,\chi}$, a key feature of a moduli space $M$ of (A) or (B) is that it admits a morphism $h: M \to B$ that behaves like a completely integrable system.  Here $h$ is the Hilbert--Chow morphism 
\begin{equation}\label{0.2_1}
h: M^L_{\beta,\chi} \to B:=\BP H^0(S ,\CO_S(\beta)), \quad \CF \mapsto  \mathrm{supp}(\CF)
\end{equation}
in the case (A), and the Hitchin fibration
\begin{equation}\label{0.2_2}
h: \widetilde{M}_{n, \chi} \to B:=\bigoplus_{i=1}^nH^0(C, \CO(iD)), \quad (\CE, \theta) \mapsto \mathrm{char}(\theta)
\end{equation}
in the case (B). In either case, there is a maximal Zariski open subset $U \subset B$ parameterizing nonsingular curves in the linear system $|\beta|$ or nonsingular spectral curves over $C$. We denote by $\pi: \CC \to U$ the smooth map given by the universal curve over $U$. %It is clear that $\pi: \CC \to U$ does not depend on $L$ or $\chi$.

\begin{thm}\label{thm0.2}
Let $M$ be a moduli space of (A) or (B), and let $h: M \to B$ be the morphism given by (\ref{0.2_1}) or (\ref{0.2_2}) respectively. Let $\pi: \CC \to U \subset B$ be the universal curve of genus $d$. Then there is an isomorphism
\begin{equation}\label{0.2_3}
    Rh_* \mathrm{IC}_M   \simeq  \bigoplus_{i=0}^{2d} \mathrm{IC}\left( \wedge^i R^1\pi_* \BQ_\CC \right)[-i+d]
\end{equation}
in the bounded derived category $D^b\mathrm{MHM}(B)$ of mixed Hodge modules on $B$.
\end{thm}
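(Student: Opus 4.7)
The plan is to prove the decomposition in three stages: first apply the BBD decomposition theorem to express $Rh_*\mathrm{IC}_M$ as a direct sum of simple shifted IC sheaves on $B$; next, use a Ng\^o-style support theorem to show that every simple summand has full support $B$; finally, compute the restriction to $U$ to pin down the local systems and match them with the stated formula. Since a full-support IC sheaf is determined by its restriction to $U$, the second and third steps together force the global decomposition.

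Since $h$ is proper and $\mathrm{IC}_M$ is a simple perverse sheaf of geometric origin, BBD (in the mixed-Hodge-module setting) produces a decomposition
\[
Rh_*\mathrm{IC}_M \simeq \bigoplus_\alpha \mathrm{IC}(Z_\alpha, L_\alpha)[n_\alpha]
\]
in $D^b\mathrm{MHM}(B)$ indexed by closed irreducible subvarieties $Z_\alpha \subseteq B$ and simple local systems $L_\alpha$ on dense opens of $Z_\alpha$. The heart of the proof is to show $Z_\alpha = B$ for every $\alpha$, and this is where the generalized Ng\^o support theorem announced in the abstract enters. The classical $\delta$-regularity hypothesis cannot be applied directly to $h : M \to B$, because $M$ is singular whenever strictly semistable objects are present; instead, the strategy is to factor $h$ through the moduli stack $\CM$ and use the splitting theorem for the morphism $\CM \to M$ to realize $\mathrm{IC}_M$ as a direct summand of the push-forward from $\CM$. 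The required codimension-of-defect estimate on $\CM$ is then supplied by the dimension bound for the stacky Hilbert--Chow morphism. Together these inputs yield the full-support conclusion, and constitute the main obstacle of the proof.

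For the computation over $U$, observe that $h$ is smooth and proper there, with fibers equal to Jacobian torsors of the smooth curves parameterized by $\pi: \CC \to U$. The total space $h^{-1}(U) \subset M$ is smooth of dimension $\dim B + d$, so $\mathrm{IC}_M|_{h^{-1}(U)} = \BQ[\dim B + d]$. Deligne's decomposition, combined with $H^*(\text{abelian variety of dimension }d) = \wedge^* H^1$, gives
\[
Rh_*\mathrm{IC}_M|_U \simeq \bigoplus_{i=0}^{2d} \left(\wedge^i R^1\pi_*\BQ_\CC\right)\!\bigl[\dim B + d - i\bigr],
\]
which agrees with the restriction to $U$ of the right-hand side of \eqref{0.2_3}. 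Combined with the full-support property established in the previous step, this forces the asserted global isomorphism in $D^b\mathrm{MHM}(B)$.
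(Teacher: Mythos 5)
Your first and third stages are exactly the paper's: the decomposition theorem in $D^b\mathrm{MHM}(B)$, the identification of the restriction to $U$ with $\bigoplus_i \wedge^i R^1\pi_*\BQ_{\CC}$ via the relative Picard variety (Proposition \ref{prop2.2}), and the principle that a summand with full support is determined by its restriction to $U$. Your account of how the hypotheses of the support theorem are verified for the singular space $M$ --- passing to the stack $\CM$, splitting $\mathrm{IC}_M[-\dim M]$ off $Rq_*\BQ_l$, and feeding in the fiber dimension bound for the stacky Hilbert--Chow morphism --- is also the paper's route (Propositions \ref{prop3.2}, \ref{prop3.3}, \ref{prop3.4}).

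The gap is in the claim that these inputs ``yield the full-support conclusion.'' They do not. What the generalized support theorem delivers (Theorem \ref{supp_thm}, hence Theorem \ref{thm3.1}) is only the one-sided inequality $\mathrm{codim}\,Z \leq \delta_Z$ for every support $Z$, where $\delta_Z$ is the generic dimension of the affine part of the relative Picard group scheme over $Z$. This by itself permits many proper supports; indeed, for a K3 surface essentially the same package is available and yet $Rh_*\mathrm{IC}$ acquires a genuine extra summand supported on the divisor $\mathrm{Sym}^2(|\beta|)$ (Section \ref{CY}), so no argument that stops at the support inequality can be correct. The missing half of the proof is the converse estimate, which occupies Section 4 of the paper: one stratifies $B$ by the type $\underline{\beta} = \big((m_i,\beta_i)\big)_i$ of the curve at the generic point of $Z$, proves the Severi-type inequality $\Phi_\beta + \mathrm{codim}\,Z \geq \sum_i \Phi_{\beta_i} + \delta_Z$ with $\Phi_\beta = 1 + \beta\cdot K_S$ (Proposition \ref{prop4.3}, resp.\ Proposition \ref{prop4.4} for Higgs bundles), and combines it with the support inequality to obtain $\Phi_\beta \geq \sum_i \Phi_{\beta_i}$, i.e.\ $1-s \geq (\beta - \sum_i \beta_i)\cdot(-K_S)$. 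Only at this point does the positivity hypothesis (ampleness of $-K_S$, resp.\ $\mathrm{deg}(D) > 2g-2$) enter and force $s=1$, $m_1=1$, hence $Z = B$. Your proposal never invokes the del Pezzo/positivity assumption in the support analysis, which is the surest sign that the step concluding full support is missing rather than merely compressed.
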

   
Since the righthand side of (\ref{0.2_3}) clearly does not depend on $L$ or $\chi$, Theorem \ref{thm0.2} implies Theorem \ref{thm0} immediately by taking global cohomology. The sheaf-theoretic nature of (\ref{0.2_3}) further yields refinements of Theorem \ref{thm0} involving perverse and Hodge filtrations. 

Although Theorem \ref{thm0.2} concerns mixed Hodge modules, it suffices to work with perverse sheaves for the proof. In fact, it is not difficult to check (\ref{0.2_3}) over $U$:
\begin{equation}\label{0.2_4}
Rh_* \BQ_{h^{-1}(U)} \simeq \bigoplus_{i=0}^{2d}\wedge^i R^1\pi_* \BQ[-i]
\end{equation}
which only concerns the variation of Hodge structures of abelian varieties; see Proposition \ref{prop2.2}. In view of the decomposition theorem \cite{Saito} for Hodge modules, to prove (\ref{0.2_3}) from (\ref{0.2_4}), we only need to verify that every semi-simple component of $Rh_*\mathrm{IC}_M$ has \emph{full support} $B$. This can be checked completely via the decomposition theorem \cite{BBD} of $Rh_*\mathrm{IC}_M$ in terms of (shifts of) semi-simple perverse sheaves. In particular, Theorem \ref{thm0.2} can be viewed as a \emph{support theorem} for the moduli spaces (A) and (B).

Ng\^o introduced \emph{a support theorem} in \cite{Ngo} which determines the supports of the direct image complex $Rf_*\BQ$ for certain morphism $f: M \to B$ called weak abelian fibration. It played a crucial role in his proof of the fundamental lemma of the Langlands program. After that, support theorems become powerful tools in various branches of mathematics; see for example \cite{dCHM1, dCRS, MS, MY, MiSh, Yun3, YZ}.

In our proof of Theorem \ref{thm0.2}, we systematically develop techniques for applying Ng\^o's support theorem to a more general setup. More precisely, we do not assume that the total space $M$ is nonsingular, and we work with more general objects $\CK \in D^b_c(M)$ than the trivial local system $\BQ$ on $M$. Theorem \ref{supp_thm} reduces a support inequality of Ng\^o type to a \emph{relative dimension bound} (see the condition (c)) for the complex $Rf_*\CK$. Then we introduce techniques to check this bound when $M$ is a moduli space of (A) or (B), and $\CK$ is the intersection cohomology complex $\mathrm{IC}_M$.

\subsection{Enumerative geometry}\label{Sec0.3}
The cohomological $\chi$-independence phenomenon is expected to be part of a much more general phenomenon in the context of enumerative geometry of curves on Calabi--Yau 3-folds, specifically the proposal for Gopakumar-Vafa invariants developed in \cite{MT} and \cite{Toda}.

Let $X$ be a Calabi--Yau 3-fold with $\beta \in H_2(X, \BZ)$ a curve class, and let $\sigma \in \mathrm{Pic}(X)_\BC$ be an element in the complexified ample cone of $X$. Following Davison--Meinhardt \cite{DM1}, and
conditional on the conjectural existence of a certain orientation, Toda introduced in \cite{Toda} the \emph{BPS sheaf} 
\[
\phi_{\mathrm{BPS}} \in  \mathrm{Perv}(M^\sigma_{\beta,\chi})
\]
which is a perverse sheaf on the moduli space $M^\sigma_{\beta,\chi}$ of $\sigma$-semistable sheaves on $X$. Consider the Hilbert--Chow map
\[
h: M^\sigma_{\beta,\chi} \to \mathrm{Chow}_{\beta}(X), \quad \CF \mapsto \mathrm{supp}(\CF).
\] 
For any $\gamma \in \mathrm{Chow}_{\beta}(X)$, the \emph{Gopakumar--Vafa (GV)} invariant (see \cite[Definition 1.1]{Toda})
is defined by the identity:
\begin{equation}\label{GV}
\Phi_{\sigma}(\gamma, \chi): = \sum_{i\in \BZ}\chi\left( {^\mathfrak{p}}\CH^i(Rh_* \phi_{\mathrm{BPS}})|_{\gamma} \right)y^i \in \BZ[y, y^{-1}].
\end{equation}

If $\chi$ is chosen so there are no strictly semistables, this definition specializes to the definition of Gopakumar-Vafa invariants in Maulik-Toda \cite{MT}.   For any choice of $\chi$ and $\sigma$, these invariants are conjectured to encode the same information as the Gromov-Witten invariants of $X$ in the curve class $\beta$ and \emph{arbitrary genus}.  Since the latter invariants are independent of $\chi$ and $\sigma$, in order for this conjecture to be well-posed, the Gopakumar-Vafa invariants should be independent of this extra data as well.  More precisely, Toda made in \cite[Conjecture 1.2]{Toda} the following  conjecture concerning the structure of GV invariants, extending \cite[Conjecture 3.3]{MT}.

%The invariants (\ref{GV}) recover Maulik--Toda's GV invariants \cite{MT}, specialize to the Joyce--Song generalized Donaldson-Thomas (DT) invariants \cite{JS}, and are expected to compute \emph{all genus Gromov--Witten invariants} of $X$ in the curve class $\beta$. We refer to \cite{SY} and \cite{CDP} for connections of GV invariants with hyper-K\"ahler geometries and the $P=W$ Conjecture \cite{dCHM1, dCMS} respectively.

\begin{conj}[Toda]\label{Conj}
The invariant (\ref{GV}) is independent of $\sigma$ and $\chi$.
\end{conj}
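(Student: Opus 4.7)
The plan is to prove Toda's conjecture in the geometric cases covered by this paper, namely when $X$ is a local toric del Pezzo surface $X=\mathrm{Tot}(K_S)$ with an ample curve class $\beta$ pulled back from $S$, or a local curve $X=\mathrm{Tot}(L_1\oplus L_2)\to C$ with $L_1\otimes L_2 = K_C$ and one of the line bundles of sufficiently high degree. The strategy is to reduce the numerical statement to the sheaf-theoretic decomposition of Theorem \ref{thm0.2} by identifying the BPS sheaf with an intersection complex in these situations.

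First I would compare the moduli spaces. For $X = \mathrm{Tot}(K_S)$ with $\beta$ ample on $S$, a standard vanishing argument shows that every $\sigma$-semistable 1-dimensional sheaf on $X$ of class $\beta$ is scheme-theoretically supported on the zero section, so $M^\sigma_{\beta,\chi}$ is canonically identified with the surface moduli $M^L_{\beta,\chi}$ of type (A), the identification being independent of $\sigma$ within the ample cone (the relevant slope inequalities only see the projection to $S$). Similarly, for local curves the BNR spectral correspondence identifies $M^\sigma_{\beta,\chi}$ with $\widetilde{M}_{n,\chi}$ of type (B). In both cases the Chow variety $\mathrm{Chow}_\beta(X)$ is canonically $B$, and the Hilbert--Chow morphism in (\ref{GV}) coincides with the map $h$ of (\ref{0.2_1}) or (\ref{0.2_2}).

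Next I would identify the BPS sheaf with $\mathrm{IC}_M$ up to a cohomological shift. Davison--Meinhardt's construction yields $\phi_{\mathrm{BPS}}$ as a summand of the vanishing cycle complex on the stack of semistable objects, obtained from a BBD-type decomposition of the pushforward from the stack to the good moduli space. In the local CY3 settings above the moduli stack carries a dimensional reduction to the smooth ambient stack of sheaves on $S$ (respectively, Higgs bundles on $C$), so the vanishing cycle complex trivialises to a shift of the constant sheaf, and the BPS summand singled out by Davison--Meinhardt matches $\mathrm{IC}_M$ up to a fixed shift depending only on $\beta$ (or $n$). Substituting this into (\ref{GV}) expresses $\Phi_\sigma(\gamma,\chi)$ as the alternating sum of Euler characteristics of the perverse cohomology stalks of $Rh_*\mathrm{IC}_M$ at $\gamma$. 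Invoking Theorem \ref{thm0.2}, the complex $Rh_*\mathrm{IC}_M$ is isomorphic to $\bigoplus_i \mathrm{IC}(\wedge^i R^1\pi_*\BQ_\CC)[-i+d]$, whose stalks depend only on the curve class $\beta$ (or rank $n$) and the geometry of the universal smooth curve $\pi:\CC\to U$, and are manifestly independent of both $\chi$ and $\sigma$. This gives the desired equality of $\Phi_\sigma(\gamma,\chi)$ for all $\chi$ and all $\sigma$.

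The main obstacle is the identification $\phi_{\mathrm{BPS}}\simeq \mathrm{IC}_M$ in the strictly semistable locus. When $\chi$ fails the coprimality assumption, $M$ is singular and the Davison--Meinhardt BPS sheaf is only intrinsically characterised as the summand of the pushforward from the moduli stack whose hyperbolic localisations along the stratification by Jordan--Hölder type vanish; verifying that this summand is $\mathrm{IC}_M$ rather than an IC-extension from a deeper stratum requires the splitting theorem for the map from stack to GIT quotient alluded to in the abstract, together with the dimension estimate for the stacky Hilbert--Chow morphism. A subsidiary issue is the $\sigma$-independence: since in our local settings the $\sigma$-dependence collapses to a choice of polarisation $L$ on $S$ (respectively, to the slope on $C$), and the right-hand side of (\ref{0.2_3}) involves only the intrinsic data $(\pi:\CC\to U, d)$, one sees that changing $\sigma$ keeps the sheaf $Rh_*\mathrm{IC}_M$ and hence the GV invariant unchanged.
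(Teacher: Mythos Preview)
Your proposal is essentially correct and follows the same route as the paper's treatment of this conjecture (which is only established in the special cases of Theorem~\ref{thm0.5}, exactly the cases you address). Two points of comparison are worth noting.

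First, the identification $\phi_{\mathrm{BPS}}\simeq \mathrm{IC}_M$ that you flag as the main obstacle is not argued in the paper via dimensional reduction and the stack-to-quotient splitting; instead it is invoked as a black box, namely Meinhardt's theorem \cite[Theorem~1.1]{Mein}, which handles precisely the local curve and local del Pezzo settings. Your sketch of a direct argument is in the right spirit (and Meinhardt's proof does use framed moduli and a splitting of the pushforward from the stack), but the paper does not reprove this and simply cites it.

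Second, for $\sigma$-independence the paper does not argue, as you do, that the right-hand side of (\ref{0.2_3}) is manifestly independent of the polarization. Rather, it first quotes Toda's own wall-crossing result \cite[Theorem~7.3]{Toda} to reduce from an arbitrary $\sigma$ in the complexified ample cone to a rational polarization, and only then applies Theorem~\ref{thm0.2} for the remaining $\chi$-independence. Your direct argument via Theorem~\ref{thm0.2} does handle the $L$-independence once $\sigma$ is a genuine polarization, but the passage from a general complexified $\sigma$ to such an $L$ is not addressed in your sketch; this is the step for which the paper relies on Toda's result.
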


The invariants (\ref{GV}) specialize to a certain case of the Joyce--Song generalized Donaldson-Thomas (DT) invariants \cite{JS}, and Conjecture \ref{Conj} is expected to refine the Joyce--Song conjecture \cite[Conjecture 6.20]{JS} on the generalized DT invariants, which in turn implies the strong rationality conjecture for Pandharipande-Thomas invariants \cite{PT, Toda2}.

Although currently the existence of the BPS sheaf is conjectural for most cases, it is known to exist for local curve and surface geometries;  it was proven by Meinhardt \cite[Theorem 1.1]{Mein} that when $X$ is a local curve $\mathrm{Tot}_C(\CO_C(D) \oplus K_C(-D))$ with $\mathrm{deg}(D)>2g-2$ or a local del Pezzo surface $\mathrm{Tot}(K_S)$, the BPS sheaf coincides with the intersection cohomology complex of the moduli space. 

\begin{thm}\label{thm0.5}
Conjecture \ref{Conj} holds when $X$ is a local curve $\mathrm{Tot}_C(\CO_C(D) \oplus K_C(-D))$ with $D$ effective of $deg(D)>2g-2$ and $\beta = n[C]$, or a local toric del Pezzo surface $\mathrm{Tot}(K_S)$ and $\beta$ is an ample curve class on $S$.
\end{thm}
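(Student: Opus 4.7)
The plan is to deduce Theorem \ref{thm0.5} from Theorem \ref{thm0.2} combined with Meinhardt's theorem \cite[Theorem 1.1]{Mein}. In each of the two Calabi--Yau 3-fold geometries, we first perform a geometric reduction to one of the moduli spaces of Theorems \ref{thm0}--\ref{thm0.2}; Meinhardt's theorem then identifies $\phi_{\mathrm{BPS}}$ with $\mathrm{IC}_M$; and the explicit $\chi$-independent structure of $Rh_*\mathrm{IC}_M$ from Theorem \ref{thm0.2} yields independence of both $\sigma$ and $\chi$ in one stroke.

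\textbf{Geometric reduction.} For $X=\mathrm{Tot}(K_S)$ with $S$ toric del Pezzo and $\beta$ ample, the condition $K_S\cdot\beta<0$ forces any $\sigma$-semistable compactly supported $1$-dimensional sheaf on $X$ of support class $\beta$ to be scheme-theoretically supported on the zero section. Concretely, viewing such a sheaf as an $\CO_S$-module $\CF$ equipped with the Higgs field $\theta:\CF\to\CF\otimes K_S$ encoding the fiber direction, a standard iterated slope argument applied to $\theta^k:\CF\to\CF\otimes K_S^k$ produces a destabilizing quotient unless $\theta=0$. This gives an isomorphism $M^\sigma_{\beta,\chi}(X)\simeq M^L_{\beta,\chi}$ for the polarization $L$ on $S$ corresponding to $\sigma$, together with $\mathrm{Chow}_\beta(X)\simeq\mathrm{Chow}_\beta(S)$ and a factorization of the support map through (\ref{0.2_1}). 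For $X=\mathrm{Tot}_C(\CO_C(D)\oplus K_C(-D))$ with $\deg(D)>2g-2$ and $\beta=n[C]$, the spectral correspondence identifies a pure compactly supported $1$-dimensional sheaf on $X$ of support class $n[C]$ with a rank-$n$ bundle on $C$ equipped with two commuting twisted endomorphisms $(\phi_1,\phi_2)$ valued in $\CO_C(D)$ and $K_C(-D)$. Since $\deg(K_C(-D))<0$, a determinant comparison applied to the generic injection induced by $\phi_2$ on a suitable subbundle forces $\phi_2=0$ unconditionally, yielding $M^\sigma_{n[C],\chi}(X)\simeq\widetilde M_{n,\chi}$ and a factorization of the support map through the Hitchin base (\ref{0.2_2}).

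\textbf{Applying Meinhardt and Theorem \ref{thm0.2}.} Under these identifications, Meinhardt's theorem gives $\phi_{\mathrm{BPS}}\simeq\mathrm{IC}_M$ in both settings. Theorem \ref{thm0.2} then provides
\[
Rh_*\phi_{\mathrm{BPS}}\;\simeq\;\bigoplus_{i=0}^{2d}\mathrm{IC}\bigl(\wedge^i R^1\pi_*\BQ_\CC\bigr)[-i+d]
\]
on $B$, and hence, after pushforward along $B\to\mathrm{Chow}_\beta(X)$, the analogous decomposition on $\mathrm{Chow}_\beta(X)$. Crucially, the right-hand side depends only on the universal curve $\pi:\CC\to U$, which is determined by $\beta$ (respectively $(n,D)$) alone---not by $\chi$, nor by the stability parameter $L$ (equivalently $\sigma$). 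Taking perverse cohomology sheaves ${}^{\mathfrak p}\CH^i$ and then stalks at any $\gamma\in\mathrm{Chow}_\beta(X)$ preserves this independence, and therefore $\Phi_\sigma(\gamma,\chi)$ defined by (\ref{GV}) is independent of $\sigma$ and $\chi$.

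\textbf{Main obstacle.} The nontrivial content is concentrated in the geometric reduction, especially in the local curve case, where one must verify that the spectral correspondence lifts to an isomorphism of moduli spaces compatible with Hilbert--Chow and Hitchin maps, and that $\sigma$-semistability on $X$ matches the standard slope semistability used to define $\widetilde M_{n,\chi}$. Once these reductions are in place the argument is purely formal from Theorem \ref{thm0.2}, since the right-hand side of (\ref{0.2_3}) manifestly depends only on data intrinsic to the base curve class.
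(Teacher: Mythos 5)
Your treatment of the $\chi$-independence is essentially the paper's argument: Meinhardt's theorem identifies $\phi_{\mathrm{BPS}}$ with $\mathrm{IC}_M$ for these local geometries, and Theorem \ref{thm0.2} gives a description of $Rh_*\mathrm{IC}_M$ whose right-hand side visibly depends on neither $\chi$ nor the polarization. Your geometric reductions (semistability forcing $\theta=0$, resp.\ $\phi_2=0$, because $K_S$, resp.\ $K_C(-D)$, restricts to negative degree on the support) are the standard spectral-correspondence identifications implicit in the paper's setup and in Meinhardt's statement; that part is fine, modulo writing out the slope argument carefully for non-reduced and reducible supports.

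The gap is in the $\sigma$-independence. The parameter $\sigma$ ranges over the \emph{complexified} ample cone in $\mathrm{Pic}(X)_{\BC}$, so in general $\sigma = B + iH$ with a nontrivial $B$-field, and $\sigma$-semistability of a one-dimensional sheaf is measured by a twisted slope of the shape $\bigl(\chi(\CF)-B\cdot[\mathrm{supp}\,\CF]\bigr)/\bigl(H\cdot[\mathrm{supp}\,\CF]\bigr)$. This is \emph{not} equivalent to slope semistability with respect to any polarization $L$ on $S$: the correction term depends on the support class of the subsheaf, so the moduli space $M^\sigma_{\beta,\chi}$ genuinely changes as $\sigma$ crosses walls. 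Your claimed isomorphism $M^\sigma_{\beta,\chi}(X)\simeq M^L_{\beta,\chi}$ ``for the polarization $L$ corresponding to $\sigma$'' therefore only covers those $\sigma$ equivalent to a rational polarization, and Theorem \ref{thm0.2} cannot by itself compare the invariants across walls in the $B$-direction. This is precisely why the paper first invokes Toda's wall-crossing result \cite[Theorem 7.3]{Toda}, which establishes $\sigma$-independence of (\ref{GV}) for local curves and local surfaces, and only then reduces to a rational polarization where Theorem \ref{thm0.2} applies. Without that input (or an equivalent wall-crossing argument), your proof does not establish independence of $\sigma$.
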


In fact, Toda showed that (\ref{GV}) is independent of the stability parameter $\sigma$ under certain conditions which hold for local curves and local surfaces \cite[Theorem 7.3]{Toda}. Hence we may assume that $\sigma$ is given by a rational polarization, and the $\chi$-independence of (\ref{GV}) follows from the $\chi$-independence of the complex $Rf_* \mathrm{IC}_M$ for a moduli space of (A) or (B) with the Hilbert--Chow map (\ref{0.2_1}) and (\ref{0.2_2}) respectively. The latter is given by Theorem \ref{thm0.2}.

Cohomological $\chi$-independence for Higgs bundles has also been studied systematically with connections to Kac-polynomials and quivers. We refer to \cite{Kac} and the references therein for more details.  
For contractible curves on Calabi-Yau threefolds, cohomological $\chi$-independence has been studied by Davison \cite{D4} who has proposed a representation-theoretic approach in that case via the cohomological Hall algebra.  For other places where GV invariants arise geometrically, see  \cite{SY} and \cite{CDP} for connections with hyper-K\"ahler geometries and the $P=W$ conjecture \cite{dCHM1, dCMS} respectively.

\subsection{K3 surfaces and O'Grady 10}\label{CY} As illustrated in the following example of K3 surfaces, the ``Fano" condition for the surface $S$ in (A) and the condition ``$\mathrm{deg}(D)>2g-2$" for Higgs bundles in (B) are essential for the $\chi$-independence to hold for intersection cohomology groups.

Let 
\[
(S, L), \quad L =\CO_S(\beta), \quad \beta^2 =2
\]
be a general polarized K3 surface of degree 2.  The linear system $|\beta|$ is 2-dimensional whose general member is a genus 2 nonsingular curve. The linear system $|2\beta|$ is 5-dimensional. We consider the moduli space of semistable sheaves on $S$ supported in the curve class $2\beta$.

 If $\chi =1$, the moduli space $M^L_{2\beta, 1}$ is nonsingular and deformation equivalent to the Hilbert scheme of 5 points on a K3 surface. When $\chi = 0$, the moduli space $M^L_{2\beta, 0}$ is singular which admits a symplectic resolution. The resolved variety provides O'Grady's 10-dimensional ``sporadic" example of compact hyper-K\"ahler manifolds. As a key step in their analysis of the topology of the O'Grady 10 variety, de Cataldo--Rapagnetta--Sacc\`a \cite{dCRS} studies the fibrations (\ref{0.2_1}):

 \begin{equation*}
    \begin{tikzcd}[column sep=small]
    M^L_{2\beta, 0} \arrow[dr, "h_0"]  & & M^L_{2\beta, 1} \arrow[dl, "h_1"] \\
       & {|2\beta|}, &  
\end{tikzcd}
\end{equation*}
where $h_i(\CF) = \mathrm{supp}(\CF)$. Combining Corollary 3.6.5 and Proposition 4.7.2 of \cite{dCRS}, we observe that
\begin{equation}\label{eqnnn6}
Rh_{1*} \mathrm{IC} = Rh_{0*} \mathrm{IC} \oplus \mathscr{S}[3]
\end{equation}
where $\mathscr{S}$ is a semisimple object supported on the divisor $\mathrm{Sym}^2(|\beta|)  \subset |2\beta|$. Furthermore, by \cite[Proposition 4.6.1]{dCRS}, the object $\mathscr{S}$ (which is denoted by $\mathscr{S}^-_\Sigma$ in \cite{dCRS}) has non-trivial global cohomology. Hence we see from (\ref{eqnnn6}) that the $\chi$-independence fails for the K3 surface $S$ both sheaf theoretically (Theorem \ref{thm0.2}) and cohomologically (Theorem \ref{thm0}).

A similar phenomenon as above is expected to hold for the Higgs bundles with $D = K_C$.

Failure of the $\chi$-independence for the ``Calabi--Yau" case is due to the fact that the BPS sheaf is different from the intersection cohomology complex on the moduli space.

\subsection{Plan of the paper}
In Section 1, we formulate and prove a generalized version of Ng\^o's support theorem, which applies to singular varieties and more general complexes. In order to apply this support theorem to intersection cohomology complexes, we need to prove a bound for IC-complexes (which holds automatically in the smooth case). This is accomplished in Sections 2 and 3 where we combine techniques from algebraic stacks, nilpotent Higgs bundles, moduli of framed objects, and unbounded complexes. Then in Section 4, we follow a strategy of Chaudouard--Laumon to show that the support inequalities are sufficient to deduce our theorems for moduli of 1-dimensional sheaves and Higgs bundles.

\subsection{Acknowledgement}
We are grateful to Bhargav Bhatt, Pierrick Bousseau, and Johan de Jong for helpful discussions and Pinka and Peter Pinkerton for further assistance. We would like to thank Pierrick Bousseau and Tudor Padurariu for their careful reading of an early draft of the paper and pointing out several typos. We also thank the anonymous referee for careful reading and numerous useful suggestions. J.S. was supported by the NSF grant DMS-2134315.

\section{A support theorem for self-dual complexes} \label{Section_Supp}

\subsection{Overview}\label{sec1.1}
The purpose of this section is to formulate and prove a generalized version of Ng\^o's support theorem for self-dual complexes. Throughout Section \ref{Section_Supp}, until subsection \ref{spread_out}, we assume that the base field $\mathbf{k}$ is a finite field with $\overline{\mathbf{k}}$ its algebraic closure. We assume that $l$ is a prime number coprime to the characteristic of $\mathbf{k}$ when we work with $l$-adic sheaves. For notational convenience, we omit Tate twists when it does not cause confusion.

Let $B$ be a scheme over $\mathbf{k}$. Let $g: P \to B$ be a smooth $B$-group scheme with geometrically connected fibers, and let $f: M \to B$ be a proper morphism with $M$ quasi-projective. Assume that the group scheme $P$ acts on $M$ via
\begin{equation}\label{action}
a: P\times_BM  \to M.
\end{equation}
We say that the triple $(M, P, B)$ is a \emph{weak abelian fibration} of relative dimension $d$, if 
\begin{enumerate}
    \item[(i)] every fiber of the map $g$ is pure of dimension $d$, and $M$ has pure dimension 
    \begin{equation}\label{1.1_2}
    \mathrm{dim}M =d  +\mathrm{dim}B,
    \end{equation} 
    \item[(ii)] the action (\ref{action}) of $P$ on $M$ has \emph{affine} stabilizers, and
    \item[(iii)] the Tate module $T_{\overline{\mathbb{Q}}_l}(P)$ associated with the group scheme $P$ is polarizable.
\end{enumerate}
The notion of weak abelian fibration was introduced by Ng\^o \cite{Ngo} modelled on Hitchin's integrable systems \cite{Hit, Hit1}. We refer to Section \ref{Section1.3} for a brief review of Tate modules and their polarizations.

For a closed point $s \in B$, we denote by $\delta(s)$ the dimension of the affine part of the algebraic group $P_s$. This defines an upper semi-continuous function
\[
\delta: B \to \BN, \quad s \mapsto \delta(s). 
\]
For a closed subvariety $Z \subset B$, we define $\delta(Z)$ to be the minimal value of the function $\delta$ on $Z$.

The following is our main theorem.

\begin{thm}\label{supp_thm}
Let $(M, P, B)$ be a weak abelian fibration of relative dimension $d$. Let $\CK \in D^b_c(M, \overline{\BQ}_l)$ be a $P$-equivariant bounded complex satisfying the following properties:
\begin{enumerate}
    \item[(a)](Decomposition Theorem) The direct image complex $Rf_* \CK$ admits a (non-canonical) decomposition 
    \begin{equation}\label{support}
    Rf_* \CK \simeq \bigoplus_i {^\mathfrak{p}\CH}^i(Rf_* \CK)[-i].
    \end{equation}
    Moreover, after a base change to  $B_{\overline{\mathbf{k}}}=B\times_{\mathbf{k}}\overline{\mathbf{k}}$, the perverse sheaves ${^\mathfrak{p}\CH}^i(Rf_* \CK)$ are semisimple of the form  
    \[
{^\mathfrak{p}\CH}^i(Rf_* \CK) = \bigoplus_{\alpha} \mathrm{IC}_{Z_{\alpha,i}}(L_{\alpha,i})
    \]
where $Z_{\alpha,i}$ is a closed irreducible subvariety of $B_{\overline{\mathbf{k}}}$ and each $L_{\alpha,i}$ is a pure simple local system of weight $i$ on an open dense subset of $Z_{\alpha,i}$. We call these $Z_{\alpha,i}$ the supports of the decomposition (\ref{support});
    \item[(b)](Duality) We have an isomorphism 
    \[
    \BD(\CK) \simeq \CK [2 \mathrm{dim}M]
    \]
    with $\BD(-)$ the dualizing functor on $M$;
    \item[(c)] (Relative Dimension Bound) For the standard truncation functor $\tau_{> *}(-)$, we have
    \[
    \tau_{> 2d} \left( Rf_* \CK \right) =0.
    \]
\end{enumerate}
Then for any support $Z$ of the decomposition (\ref{support}), we have the inequality 
\begin{equation}\label{supp_dim}
\mathrm{codim}Z \leq \delta_{Z}.
\end{equation}
\end{thm}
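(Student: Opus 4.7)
My strategy is to follow Ng\^o's original argument \cite{Ngo} while replacing his use of the smoothness of the total space with the duality hypothesis (b) and the relative dimension bound (c). The plan has three steps: upgrading $Rf_*\CK$ to a module over an exterior algebra of Tate modules, establishing a freeness statement for this action, and extracting the codimension bound from a degree count.

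First, using the $P$-equivariance of $\CK$, the direct image $Rf_*\CK$ acquires a canonical graded action of the exterior algebra $\bigwedge^*\Lambda[-1]$, where $\Lambda = R^1g_*\overline{\BQ}_l$ is the cohomological Tate module of $P$; this is a standard construction (cf.\ \cite{Ngo}). After stratifying $B$ by the Chevalley type of $P$ and working over a neighborhood of the generic point $\eta$ of the given support $Z$, one may assume $\Lambda$ fits into a short exact sequence $0\to\Lambda^{\mathrm{aff}}\to\Lambda\to\Lambda^{\mathrm{ab}}\to 0$ in which $\Lambda^{\mathrm{ab}}$ is polarizable of rank $2(d-\delta_Z)$ by hypothesis (iii), while $\Lambda^{\mathrm{aff}}$ carries only unipotent monodromy. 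A filtration argument then reduces the analysis to the induced action of $\bigwedge^*\Lambda^{\mathrm{ab}}_\eta$.

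The key technical input, and the main obstacle of the proof, is a \emph{freeness} statement: the total multiplicity generating function
\[
m_Z(x)=\sum_i \mu_i(Z)\,x^i,
\]
where $\mu_i(Z)$ counts all IC-summands supported on $Z$ appearing in ${^\mathfrak{p}\CH}^i(Rf_*\CK)$ (with their multiplicities), factors as $P(x)\cdot(1+x)^{2(d-\delta_Z)}$ with $P(x)$ a polynomial having non-negative integer coefficients. In the smooth case treated by Ng\^o, this is deduced from the Chow-K\"unneth decomposition of an abelian variety combined with the Hard Lefschetz isomorphism supplied by polarizability. In our generality, the self-duality (b) plays the role of Poincar\'e duality on $M$, and the truncation (c) replaces the automatic fiberwise cohomological vanishing above degree $2d$; the resulting perverse Lefschetz-type operators, together with polarizability, realize the free-module structure.

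Given freeness, the codimension bound is a bookkeeping exercise. Condition (c) forces $i\le 2d+\dim Z$ whenever $\mathrm{IC}_Z(L)$ appears in ${^\mathfrak{p}\CH}^i(Rf_*\CK)$, since the stalk contribution of $\mathrm{IC}_Z(L)[-i]$ at $\eta$ lies in cohomological degree $i-\dim Z$. Duality (b) yields $\BD(Rf_*\CK)\simeq Rf_*\CK[2\dim M]$, and hence the symmetry $i\mapsto 2\dim M-i$ on multiplicities, which gives the matching lower bound $i\ge \dim B+\mathrm{codim}(Z)$. Thus the support of $m_Z(x)$ is contained in an interval of width $2d-2\,\mathrm{codim}(Z)$, while freeness requires this width to be at least $2(d-\delta_Z)$. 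Combining these yields
\[
2(d-\delta_Z)\le 2d-2\,\mathrm{codim}(Z),
\]
which is precisely the inequality (\ref{supp_dim}).
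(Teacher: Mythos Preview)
Your overall architecture matches the paper's: construct an action of the exterior algebra on $Rf_*\CK$ via $P$-equivariance, prove a freeness statement for the abelian-variety part of this action, and then deduce the inequality by a degree/amplitude count. Your final bookkeeping step is correct and is exactly how the paper concludes (see the paragraph around equation~(\ref{amp_ineq})).

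The genuine gap is the freeness step. You write that ``the self-duality (b) plays the role of Poincar\'e duality on $M$, and the truncation (c) replaces the automatic fiberwise cohomological vanishing \ldots\ the resulting perverse Lefschetz-type operators, together with polarizability, realize the free-module structure.'' This is not how freeness is obtained, and there is no indication such an argument can be made to work. Conditions (b) and (c) are used \emph{only} in the final degree count (exactly as you use them there); they play no role in establishing freeness. The paper instead proves freeness fiberwise (Proposition~\ref{prop1.6}): one chooses a quasi-lifting of the abelian variety $A_s$ into $P_s$, uses axiom (ii) of a weak abelian fibration (affine stabilizers) to ensure that $A_s$ acts on $M_s$ with \emph{finite} stabilizers, and then analyzes the smooth proper quotient $q:M_s\to M_s/A_s$ via the projection formula, showing $H^*(M_s,\CK_s)$ is free over $\Lambda_{A_s}$. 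You never invoke axiom~(ii), which is essential.

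There is a second missing ingredient. The freeness statement you need is not about the total stalk $H^*(M_\eta,\CK_\eta)$ but about the sub-piece $\CL_{\alpha,\eta}$ supported exactly on $Z$. Extracting this from the fiberwise statement requires a \emph{descending induction on the dimension of the support}: at the generic point $\eta$ of $Z$, the perverse filtration on $H^*(M_\eta,\CK_\eta)$ has graded pieces coming from all supports $Z_{\alpha'}\supseteq Z$, and one must know by induction that the contributions from strictly larger supports are already free in order to peel off the piece supported on $Z$ itself (this is Step~C of Section~\ref{sec1.6}, and uses polarizability in a substantive way to compare the $\Lambda_{A_\alpha}$-actions along an inclusion of supports). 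Your ``filtration argument then reduces the analysis'' skips over precisely this induction.
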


In \cite{Ngo}, Ng\^o worked with the trivial local system $\overline{\mathbb{Q}}_l$ on $M$ where he assumed that the conditions (a) and (b) hold. Furthermore, he assumed that every fiber of $f$ is pure of dimension $d$, where the condition (c) follows automatically by the base change. Therefore, Theorem \ref{supp_thm} is a generalization of Ng\^o's support theorem \cite[Theorem 7.2.1 and Proposition 7.2.2]{Ngo}. We note that (c) is a crucial condition for the support theorem to hold for general $\CK$ as in Theorem \ref{supp_thm}. We first illustrate this in the following special case of Theorem \ref{supp_thm} --- the Goresky--MacPherson inequality.

\subsection{The Goresky--MacPherson inequality}

If the group scheme $P$ is affine whose action on $M$ is trivial, Theorem \ref{supp_thm} then specializes to the following theorem, which is known as the \emph{Goresky--MacPherson inequality} when $M$ is nonsingular and $\CK = \overline{\BQ}_l$.

\begin{thm}\label{thm1.2}
Let $f: M \to B$ be a proper map with $\mathrm{dim}M = \mathrm{dim}B +d$. Assume $\CK \in D^b(M, \overline{\BQ}_l)$ satisfies (a,b,c) of Theorem \ref{supp_thm}. Then any support $Z$ of (\ref{support}) satisfies the inequality
\[
\mathrm{codim}(Z) \leq d.
\]
\end{thm}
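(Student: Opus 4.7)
The plan is to extract two linear inequalities on the indices $j$ at which $\mathrm{IC}_Z(\cdot)$ appears in the perverse cohomology sheaves of $Rf_*\CK$ --- one from the cohomological upper bound (c) and one from Verdier self-duality (b) --- and combine them to bound $\mathrm{codim}(Z)$.

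First I would write
\[
Rf_*\CK \simeq \bigoplus_j \CP_j[-j], \qquad \CP_j := {^\mathfrak{p}\CH}^j(Rf_*\CK),
\]
and, for a fixed support $Z$ of the decomposition, introduce
\[
I(Z) := \{\, j \in \BZ : \mathrm{IC}_Z(L) \text{ is a simple summand of } \CP_j \text{ for some } L \,\}.
\]
Since $f$ is proper, (b) combined with Verdier duality gives $\BD(Rf_*\CK) \simeq (Rf_*\CK)[2\dim M]$. Taking ${^\mathfrak{p}\CH}^i$ and using the identity $\BD \circ {^\mathfrak{p}\CH}^i = {^\mathfrak{p}\CH}^{-i} \circ \BD$ yields $\BD(\CP_j) \simeq \CP_{2\dim M - j}$. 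Because $\BD$ sends $\mathrm{IC}_Z(L)$ to $\mathrm{IC}_Z(L^\vee)$, the set $I(Z)$ is stable under $j \mapsto 2\dim M - j$; writing $j_{\min}(Z), j_{\max}(Z)$ for its extremes, one has
\[
j_{\min}(Z) + j_{\max}(Z) = 2\dim M = 2\dim B + 2d.
\]

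Next I would examine the stalk at a smooth point $z$ of $Z$ where the relevant local system $L$ is defined. Locally $\mathrm{IC}_Z(L) \simeq L[\dim Z]$, so $\mathrm{IC}_Z(L)[-j]_z$ is concentrated in cohomological degree $j - \dim Z$ with value $L_z$; because (\ref{support}) is an honest direct sum, this contributes $L_z$ as a direct summand of $\CH^{j - \dim Z}(Rf_*\CK)_z$ that cannot be cancelled by any other summand --- in particular, not by summands $\mathrm{IC}_{Z'}(L')[-j']$ with $Z \subsetneq Z'$. Condition (c), which forces $\CH^{k}(Rf_*\CK) = 0$ for $k > 2d$, then yields
\[
j \leq 2d + \dim Z \qquad \text{for every } j \in I(Z).
\]
Applying this to $j = j_{\min}(Z) = 2\dim M - j_{\max}(Z)$ and rearranging gives $j_{\max}(Z) \geq 2\dim B - \dim Z$, which together with $j_{\max}(Z) \leq 2d + \dim Z$ forces $2(\dim B - \dim Z) \leq 2d$, i.e.\ $\mathrm{codim}(Z) \leq d$.

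The only subtle point in the argument is the non-cancellation of the generic stalk contribution of $\mathrm{IC}_Z(L)[-j]$ in the presence of larger-support summands, and this is automatic from the direct-sum structure provided by (a). Apart from that, the proof is a short, formal combination of (a), (b), (c) in which all three hypotheses interlock, and I do not anticipate any substantive obstacle.
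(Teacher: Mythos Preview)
Your proof is correct and follows essentially the same approach as the paper: use the Verdier self-duality (b) to obtain the symmetry of the set of occurring perverse degrees about $\dim M$, then use the direct-sum splitting (a) to read off a nonvanishing ordinary cohomology sheaf at a generic point of $Z$, and finally invoke the bound (c). The paper phrases the endgame slightly more directly by choosing a single $m\in\mathrm{occ}(Z)$ with $m\geq \dim M$ and applying (c) once, whereas you route through both $j_{\min}$ and $j_{\max}$, but the content is the same.
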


We first provide a proof of Theorem \ref{thm1.2} since it contains the main ingredients in the proof of Theorem \ref{supp_thm}, and in particular demonstrates the role played by the conditions (a,b,c).

\begin{proof}
Let $Z$ be a support. We denote
\begin{align*}
    \mathrm{occ}(Z)& :=\{i\in \BZ, ~~~ {^\mathfrak{p}\CH}^i(Rf_* \CK) \textup{ contains a simple factor with support } Z\}, \\
    \mathrm{amp}(Z) & := \mathrm{max}(\mathrm{occ}(Z))- \mathrm{min}(\mathrm{occ}(Z)).
\end{align*}

By (b), the set $\mathrm{occ}(Z)$ is symmetric with respect to the integer $\mathrm{dim}M$. This allows us to pick $m \in \mathrm{occ}(Z)$ with $m \geq \mathrm{dim}M$. In particular, we have ${^\mathfrak{p}\CH}^m(Rf_* \CK) \neq 0$. Hence by (a) there exists an open subset $U \subset Z$ and a local system $\CL$ on $U$ such that the shifted perverse sheaf
\[
\big{(}\CL [\mathrm{dimZ}]\big{)} [-m]= \CL[\mathrm{dim}Z -m]
\]
is a direct sum component of the complex $(Rf_*\CK)|_U$. We obtain that
\begin{equation}\label{Hp}
\CH^{m-\mathrm{dim}Z}(Rf_*\CK) \neq 0 \in D^b_c(B, \overline{\BQ}_l).
\end{equation}
By (\ref{Hp}) and the condition (c), we conclude that
\[
\mathrm{dim}M - \mathrm{dim}Z \leq m- \mathrm{dim}Z \leq 2d
\]
where the first inequality follows from the choice of $m$. This completes the proof of Theorem \ref{thm1.2} thanks to (\ref{1.1_2}).
\end{proof}

As observed by Ng\^o \cite[Proposition 7.3.2]{Ngo}, for a weak abelian fibration $(M,P,B)$ and an object $\CK$ as in Theorem \ref{supp_thm}, if we have 
\begin{equation}\label{amp_ineq}
    \mathrm{amp}(Z) \geq 2(d-\delta_Z),
\end{equation}
then the integer $m$ in the proof of Theorem \ref{thm1.2} can be chosen such that 
\[
m \geq  \mathrm{dim}M + (d- \delta_Z).
\]
An identical argument as above implies (\ref{supp_dim}).

In conclusion, the following proposition implies Theorem \ref{supp_thm}.

\begin{prop}\label{Prop1.3}
Under the assumption of Theorem \ref{supp_thm}, the inequality (\ref{amp_ineq}) holds for any support $Z$ of $Rf_*\CK$.
\end{prop}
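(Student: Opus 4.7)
The plan is to follow Ng\^o's freeness argument \cite[Proposition 7.3.2]{Ngo}, but adapted to the setting of an arbitrary $P$-equivariant complex $\CK$ satisfying (a,b,c), rather than the constant sheaf on a smooth total space. The $P$-equivariance of $\CK$ is the essential input that allows the cap-product action of the cohomology of the fibers of $g: P \to B$ to be defined on $Rf_*\CK$.

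First I would construct the relative cap-product action. Combining the $P$-equivariance isomorphism $a^*\CK \simeq p_2^*\CK$ with proper base change along the Cartesian square for $P \times_B M$, together with the K\"unneth formula for $Rg_*\overline{\BQ}_l$, one produces a natural morphism
\[
Rg_*\overline{\BQ}_l \otimes_{\overline{\BQ}_l} Rf_*\CK \ra Rf_*\CK.
\]
Taking stalks at a geometric point $\bar{s}$ of $B$, this endows $(Rf_*\CK)_{\bar{s}}$ with the structure of a graded module over $H^*(P_{\bar{s}}, \overline{\BQ}_l)$. A class in $H^k(P_{\bar{s}})$ acts by a morphism $Rf_*\CK \to (Rf_*\CK)[k]$ in the derived category, and hence, after applying the decomposition (\ref{support}) and the perverse truncation, induces maps ${^\mathfrak{p}\CH}^i(Rf_*\CK) \to {^\mathfrak{p}\CH}^{i+k}(Rf_*\CK)$ on the perverse cohomology sheaves.

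Now let $\eta$ be the generic point of $Z$. By Chevalley's theorem applied to the smooth connected group scheme $P_\eta$, there is a canonical extension
\[
1 \ra P^{\mathrm{aff}}_\eta \ra P_\eta \ra A_\eta \ra 1
\]
with $A_\eta$ an abelian variety of dimension $d - \delta_Z$. The induced exterior subalgebra $\wedge^* H^1(A_\eta, \overline{\BQ}_l) \hookrightarrow H^*(P_\eta, \overline{\BQ}_l)$ has rank $2(d-\delta_Z)$, and the polarizability hypothesis (iii) guarantees a nondegenerate alternating pairing on $H^1(A_\eta)$, which in turn forces the top exterior power to be a nonzero line. The core of the argument is then: the top class acts nontrivially from the lowest to the highest perverse cohomology sheaf whose $Z$-supported summand is nonzero, which immediately produces $\mathrm{amp}(Z) \geq 2(d-\delta_Z)$.

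The main obstacle I anticipate is this last nontriviality statement in our general setting. In Ng\^o's original framework, where $M$ is smooth and $\CK = \overline{\BQ}_l$, the top class is geometrically represented by the fundamental class of $A_\eta$ acting on $M_\eta$, and its nontriviality reduces to the nonvanishing of $H^{2d}(M_\eta)$. For arbitrary $\CK$ this direct argument fails, and one must argue indirectly, combining: (i) the duality (b), which makes $\mathrm{occ}(Z)$ symmetric about $\dim M$; (ii) the relative dimension bound (c), which pins down the top nonvanishing stalk cohomology of $Rf_*\CK$ in a range commensurate with $2d$; and (iii) a localization argument at $\eta$ ensuring that simple summands supported on proper closed subvarieties $Z' \subsetneq Z$ do not interfere with the $\wedge^*H^1(A_\eta)$-module structure on the $Z$-graded piece of the generic stalk. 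Together these inputs promote the mere nonvanishing of the top class in $H^*(A_\eta)$ to a nonzero operator on the $Z$-component of ${^\mathfrak{p}\CH}^*(Rf_*\CK)$, yielding (\ref{amp_ineq}) and hence Proposition \ref{Prop1.3}.
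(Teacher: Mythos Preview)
Your setup through the cap-product action is correct and matches the paper's Proposition~\ref{prop1.4}. The reduction to ``the top exterior class in $\wedge^{2(d-\delta_Z)}H^1(A_\eta)$ acts nontrivially on the $Z$-supported summand'' is also the right target. But the final paragraph, where you try to extract that nontriviality from (b), (c), and a localization argument, is not a proof, and in fact misidentifies the inputs.

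First, condition (c) plays no role in the paper's proof of Proposition~\ref{Prop1.3}; it is invoked only \emph{after} the amplitude inequality is established, in the passage from (\ref{amp_ineq}) to (\ref{supp_dim}). An upper bound on the top nonvanishing degree of $Rf_*\CK$ cannot by itself force the exterior algebra to act with large amplitude on a specific summand. Second, and more seriously, you never use axiom (ii) of a weak abelian fibration (affine stabilizers). This is the essential structural input: it guarantees that the abelian variety $A_{\bar s}$ obtained from a quasi-lifting of the Chevalley quotient acts on the fiber $M_{\bar s}$ with \emph{finite} stabilizers. The paper then proves (Proposition~\ref{prop1.6}) that for any $A$-equivariant complex $\CE$ on a projective $X$ with such an action, $H^*(X,\CE)$ is a \emph{free} graded $\Lambda_A$-module; freeness is strictly stronger than nontriviality of the top class and is what actually yields (\ref{amp_ineq}).

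Finally, the passage from this fiberwise freeness to freeness of the $Z$-supported graded piece $\CL_{\alpha,u_\alpha}$ is not immediate: the stalk of $Rf_*\CK$ at a point of $V_\alpha\subset Z$ sees contributions from all supports $Z_{\alpha'}\supsetneq Z_\alpha$ as well. The paper handles this by a \emph{descending induction on $\dim Z_\alpha$} (Section~\ref{sec1.6}), using the perverse Leray filtration and the induction hypothesis to show that the other graded pieces are already free, and then extracting freeness of the $Z$-piece from freeness of the whole and of the complementary pieces. Your ``localization at $\eta$'' gesture is in the right direction but stops well short of this inductive mechanism.
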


The rest of Section \ref{Section_Supp} is devoted to proving Proposition \ref{Prop1.3}. We will further reduce Proposition \ref{Prop1.3} to a fiberwise ``freeness" statement as stated in Proposition \ref{Prop1.5}. Essentially, the arguments of \cite[Section 7]{Ngo} can be modified to prove this more generalized version of ``freeness" under our assumptions. We point out the necessary modifications (see Propositions \ref{prop1.4} and \ref{prop1.6}) and sketch all major steps in the proof following \cite[Section 7]{Ngo} for the reader's convenience.

\subsection{Actions of the group scheme}\label{Section1.3}
As part of the data of a weak abelian fibration $(M,P,B)$, the group scheme $g: P \to B$ is smooth over $B$ with $d$-dimensional geometrically connected fibers, which defines a complex
\[
\Lambda_P :=Rg_! \overline{\BQ}_l[2d]
\]
on the base $B$. The stalk of each cohomology sheaf $\CH^{-i}(\Lambda_P)$ over a closed point $s \in B$ computes the $i$-th homology of the group $P_s$,
\[
\CH^{-i}(\Lambda_P)_s = H^{2d-i}_c(P_s, \overline{\BQ}_l) = H_i(P_s, \overline{\BQ}_l).
\]
The Tate module associated with $g: P\to B$ is defined to be
\begin{equation}\label{Tate_mod}
T_{\overline{\BQ}_l}(P) := \CH^{-1}(\Lambda_P).
\end{equation}
Note that the complex $\Lambda_{(-)}$ and the sheaf $T_{\overline{\BQ}_l}(-)$ are defined for any smooth group scheme over $B$.
In our setting, as shown in \cite[Section 7.4.3]{Ngo}, the
group structure $\mu: P \times_B P \rightarrow P$
induces a convolution product
\[
\tau:\Lambda_P \otimes \Lambda_P \rightarrow \Lambda_P.
\]
Furthermore, it is also shown there that equation \eqref{Tate_mod} extends to a natural isomorphism
\[
\Lambda_P = \bigoplus \bigwedge^i T_{\overline{\BQ}_l}(P)[i]
\]
compatible with the multiplication action on each side.

We consider the Chevalley decomposition of the nonsingular commutative group $P_s$
\begin{equation}\label{Chevalley}
1 \to R_s \to P_s \to A_s \to 1
\end{equation}
over any geometric point $s\in B$ where $R_s$ is affine and $A_s$ is an abelian variety. This induces the short exact sequence of Tate modules
\begin{equation}\label{Tate_chevalley}
0 \to T_{\overline{\BQ}_l}(R_s) \to T_{\overline{\BQ}_l}(P_s) \to T_{\overline{\BQ}_l}(A_s) \to 0.
\end{equation}
Following \cite[Section 7.1.4]{Ngo}, we say that the Tate module (\ref{Tate_mod}) is polarizable if \'etale locally there exists a bilinear form 
\[
T_{\overline{\BQ}_l}(P) \times T_{\overline{\BQ}_l}(P) \to \overline{\BQ}_l
\]
which induces a non-degenerate pairing on $T_{\overline{\BQ}_l}(A_s)$ for any $s\in B$ via the quotient map of (\ref{Tate_chevalley}).

The following proposition generalizes the cap product action constructed in \cite[Section 7.4.2]{Ngo}.

\begin{prop}\label{prop1.4}
Let $(M,P,B)$ be a weak abelian fibration of relative dimension $d$, and let $\CK \in D_c^b(M, \overline{\BQ}_l)$ be a $P$-equivariant object. Then the $P$-action (\ref{action}) on $M$ induces an action of $\Lambda_P$ on $Rf_* \CK$,
\begin{equation}\label{prop1.4_1}
c: \Lambda_P \otimes Rf_* \CK \to Rf_* \CK.
\end{equation}
Furthermore, the compositions $c \circ (\tau \otimes \mathrm{id})$ and $c \circ (\mathrm{id}\otimes c)$
define the same morphism
\[ \Lambda_P \otimes \Lambda_P \otimes Rf_* \CK \to Rf_* \CK.
\]
\end{prop}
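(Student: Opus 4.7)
In the Cartesian square with corners $P\times_B M$, $P$, $M$, $B$ and structure maps $p_1,p_2,g,f$, the projection $p_2:P\times_B M\to M$ is smooth of relative dimension $d$ as a base change of $g$. The action map $a:P\times_B M\to M$ differs from $p_2$ by the shear automorphism $(p,m)\mapsto(p,p\cdot m)$ of $P\times_B M$, so $a$ is also smooth of relative dimension $d$. Both composites $f\circ p_2$ and $f\circ a$ coincide with the common proper map $q:=g\circ p_1:P\times_B M\to B$.

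To construct $c$, I would argue on the two pushforwards separately. Proper base change applied to the above square, together with the projection formula, yields
\[
Rq_!\,p_2^*\CK\;\cong\;Rf_*\bigl(\CK\otimes f^*Rg_!\overline{\BQ}_l\bigr)\;\cong\;Rf_*\CK\otimes\Lambda_P[-2d].
\]
In the opposite direction, smoothness of $a$ of relative dimension $d$ gives an isomorphism $a^!\CK\cong a^*\CK[2d]$ (Tate twists suppressed), and the counit of the adjunction $(Ra_!,a^!)$ produces a morphism
\[
Rq_!\,a^*\CK\;=\;Rf_*\,Ra_!\,a^!\CK[-2d]\;\longrightarrow\;Rf_*\CK[-2d].
\]
The canonical $P$-equivariance isomorphism $\alpha:a^*\CK\xrightarrow{\sim}p_2^*\CK$ glues the two sides: shifting by $[2d]$ and composing produces the desired action morphism
\[
c:\;\Lambda_P\otimes Rf_*\CK\;\xrightarrow{\;\cong\;}\;Rq_!\,p_2^*\CK[2d]\;\xrightarrow{(\alpha^{-1})_*}\;Rq_!\,a^*\CK[2d]\;\longrightarrow\;Rf_*\CK.
\]

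For compatibility with $\tau$, I would repeat the construction on the double fibre product $P\times_B P\times_B M$, using the three natural maps to $M$ given by projection onto the third factor, the partial action $a\circ p_{23}$, and the full action $a\circ(\mu\times\mathrm{id}_M)$, which equals $a\circ(\mathrm{id}_P\times a)$ by strict associativity of the $P$-action. Running proper base change and the adjunction argument as above identifies both iterated morphisms $c\circ(\tau\otimes\mathrm{id})$ and $c\circ(\mathrm{id}\otimes c)$ with pushforwards along $P\times_B P\times_B M\to B$ of natural comparison maps; the cocycle identity satisfied by $\alpha$ on $P\times_B P\times_B M$ (which is part of the datum of $P$-equivariance) then forces these two composites to agree as morphisms $\Lambda_P\otimes\Lambda_P\otimes Rf_*\CK\to Rf_*\CK$.

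The construction is essentially Ng\^o's from \cite[Section 7.4.2]{Ngo} in the case $\CK=\overline{\BQ}_l$; the only substantive novelty is the insertion of the equivariance isomorphism $\alpha$ where previously one had the tautological identification $p_2^*\overline{\BQ}_l=a^*\overline{\BQ}_l$. The main obstacle is therefore careful bookkeeping — tracking shifts and Tate twists through the identifications above, and verifying that the cocycle condition on $\alpha$ produces precisely the diagram chase needed for associativity, neither of which poses a genuinely new difficulty beyond Ng\^o's setting.
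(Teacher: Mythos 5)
Your construction is essentially identical to the paper's: the paper builds $c$ from the trace map $Ra_!\overline{\BQ}_l[2d]\to\overline{\BQ}_l$ tensored with $\CK$ and the projection formula, which under the purity isomorphism $a^!\CK\simeq a^*\CK[2d]$ is exactly your counit $Ra_!a^!\CK\to\CK$, and both arguments insert the equivariance isomorphism in the same place and deduce associativity from the cocycle condition on $P\times_BP\times_BM$. One harmless slip: $q=g\circ p_1$ is not proper in general (only $f$ is), but since you only ever use $Rq_!=Rf_!\circ Ra_!=Rf_*\circ Ra_!$ this does not affect the argument.
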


\begin{proof}
The trace map 
\[
Ra_! \overline{\BQ}_l [2d] \to \overline{\BQ}_l
\]
on $M$ associated with (\ref{action}) induces a morphism
\begin{equation}\label{trace1}
Ra_! \overline{\BQ}_l [2d] \otimes \CK \to \CK.
\end{equation}
We consider the Cartesian diagram 
\begin{equation}\label{diagram1}
\begin{tikzcd}
P\times_BM \arrow[r, "p_M"] \arrow[d, "p_P"]
& M \arrow[d, "f"] \\
P \arrow[r, "g"]
& B
\end{tikzcd}
\end{equation}
with $p_M$ and $p_P$ the projections. The lefthand side of (\ref{trace1}) is equal to
\begin{align*}
    Ra_!\left( \overline{\BQ}_l[2d] \otimes a^* \CK\right) =Ra_! \left( \overline{\BQ}_l[2d]\otimes p_M^*\CK \right).
\end{align*}
Here we used the projection formula and the isomorphism $\phi: a^*\CK \simeq p_M^*\CK$ given by the $P$-equivariance of $\CK$. Hence we obtain the morphism 
\[
Ra_! \left( \overline{\BQ}_l[2d]\otimes p_M^*\CK \right) \to \CK.
\]
Applying the functor $Rf_*$ to the morphism above and noticing that $Rf_* = Rf_!$, we have
\[
Rf_!Ra_! \left( \overline{\BQ}_l[2d]\otimes p_M^*\CK \right) \to Rf_*\CK
\]
where the lefthand side can be rewritten as
\begin{align*}
    Rf_!Ra_! \left( \overline{\BQ}_l[2d]\otimes p_M^*\CK \right) & =  Rf_!Rp_{M!} \left( \overline{\BQ}_l[2d]\otimes p_M^*\CK \right)\\
    &=  Rf_!\left(Rp_{M!} \overline{\BQ}_l[2d]\otimes \CK \right)\\
    & = Rf_!\left(Rp_{M!}(p_P^* \overline{\BQ}_l[2d])\otimes \CK \right)\\
    & = Rf_!\left(f^* Rg_! \overline{\BQ}_l[2d]\otimes \CK \right) \\
    & = Rg_!\overline{\BQ}_l[2d] \otimes Rf_!\CK
\end{align*}
where the first equality follows from 
\[
f\circ a = g\times_Bf: P\times_BM \to B,
\]
the second equality is given by the projection formula, the third equality follows from $p_P^*\overline{\BQ}_l = \overline{\BQ}_l$, the fourth equality is the base change 
\[
{Rp_M}_! p_P^* = f^*Rg_!
\]
with respect to the diagram (\ref{diagram1}), and the last equality is again given by the projection formula. 

To show the second claim of the proposition, we apply the same construction from above to the commutative diagram
\begin{equation}\label{diagram-action}
\begin{tikzcd}
P\times_B P\times_B M \arrow[r, "\mathrm{id}_P\times a"] \arrow[d, "\mu\times \mathrm{id}_M"]
& P \times_B M \arrow[d, "a"] \\
P \times_B M \arrow[r, "a"]
& M.
\end{tikzcd}
\end{equation}
Again using the trace map, each path defines a morphism
\[
\Lambda_P \otimes \Lambda_P \otimes Rf_* \CK \to Rf_* \CK
\]
The path via the lower-left corner gives the morphism $c \circ (\tau \otimes \mathrm{id})$ and the path via the upper-right corner gives the morphism $c \circ (\mathrm{id}\otimes c)$.  The equivariant structure on $\CK$ implies a cocycle condition on the isomorphism $\phi$ after pullback to $P\times_B P\times_B M$; this cocycle condition implies that these two morphisms agree.

This completes the proof of Proposition \ref{prop1.4}.
\end{proof}

\subsection{Actions on each support and freeness}\label{Sec1.4}
We denote by $I$ the set of the supports $Z \subset B_{\overline{\mathbf{k}}}$ of $Rf_* \CK$. In view of the condition (a) of Theorem \ref{supp_thm}, we have a canonical decomposition of the perverse sheaf ${^\mathfrak{p}\CH}^i(Rf_* \CK)$ in terms of the supports
\[
{^\mathfrak{p}\CH}^i(Rf_* \CK) = \bigoplus_{\alpha\in I} \CK^i_\alpha
\]
where $\CK^i_\alpha$ has support $Z_\alpha$ indexed by $\alpha \in I$. We collect all the direct summands of $Rf_*\CK$ with support $Z_\alpha$,
\begin{equation}\label{K_alpha}
\CK_\alpha : = \bigoplus_{i} \CK^i_{\alpha}.
\end{equation}

In the following 4 steps, we prove that Proposition \ref{Prop1.3} can be reduced to a freeness property concerning stalks of (\ref{K_alpha}).

\noindent {\bf Step 1.} For any support $Z_\alpha$ of $Rf_*\CK$, we may find an open dense subset $V_\alpha \subset Z_\alpha$ such that:
\begin{enumerate}
    \item[(i)] the restriction of $\CK^i_\alpha$ to $V_\alpha$ is of the form $\CL^i_\alpha [\mathrm{dim}V_\alpha]$ with $\CL^i_\alpha$ a pure local system of weight $i$;
    \item[(ii)] the restriction $P_\alpha$ of the group scheme $P$ to the support $Z_\alpha$ admits a Chevalley decomposition 
    \begin{equation}\label{step1_1}
    1 \to R_\alpha \to P_\alpha \to A_\alpha \to 1
    \end{equation}
    whose induced short exact sequence of Tate modules
    \[
    0 \to T_{\overline{\BQ}_l}(R_\alpha) \to T_{\overline{\BQ}_l}(P_\alpha) \to T_{\overline{\BQ}_l}(A_\alpha) \to 0
    \]
    satisfies that $T_{\overline{\BQ}_l}(R_\alpha)$ is a pure local system of weight -2; and
    \item[(iii)] for any other support $Z_{\alpha'}$, we have $Z_{\alpha'} \cap V_\alpha = \emptyset$ unless $Z_\alpha \subset Z_{\alpha'}$.
\end{enumerate}

Since Step 1 (i, iii) are standard and (ii) only concerns the group scheme $P$, this follows identically from \cite[Section 7.4.8]{Ngo}. \medskip

\noindent {\bf Step 2.} In \cite[Section 7.4.6]{Ngo}, we replace $Rf_! \overline{\BQ}_l$ by $Rf_! \CK$, and we replace the cap product action
\[
\Lambda_P \times Rf_! \overline{\BQ}_l \rightarrow Rf_! \overline{\BQ}_l
\]
of \cite[Section 7.4.2]{Ngo} by the action (\ref{prop1.4_1}) constructed in Proposition \ref{prop1.4}:
\begin{equation*}\label{step2_0}
\Lambda_P \otimes Rf_! \CK \to Rf_!\CK. 
\end{equation*}
As a consequence, for each $\alpha \in I$ and $i$ we obtain an morphism
\begin{equation}\label{step2_1}
T_{\overline{\BQ}_l}(P_\alpha) \otimes \CK^i_\alpha \to \CK^{i-1}_{\alpha}.
\end{equation}
The last statement follows from an identical argument as in \cite[Sections 7.4.6 and 7.4.7]{Ngo}. More precisely, perverse truncation functors yield 
\[
T_{\overline{\BQ}_l}(P_\alpha) \otimes {^\mathfrak{p}\CH}^i(Rf_! \CK) \to {^\mathfrak{p}\CH}^{i-1}(Rf_! \CK),
\]
which can be further written as
\[
\bigoplus_{\alpha\in I} T_{\overline{\BQ}_l}(P_\alpha) \otimes \CK^i_\alpha \to \bigoplus_{\alpha\in I} \CK^{i-1}_\alpha 
\]
in terms of the supports $Z_\alpha$. This gives the canonical morphism (\ref{step2_1}).

\medskip
\noindent {\bf Step 3.} Now we combine Steps 1 and 2. Consider the restriction of $\CK_\alpha$ to $V_\alpha$ of Step 1,
\[
\CL_\alpha : = \bigoplus_{i} \CL^i_\alpha[-i].
\]

Using the last part of Proposition \ref{prop1.4},
the morphisms
(\ref{step2_1})
extend to an action of the local system of graded algebras
$\Lambda_{P_\alpha} = \bigoplus \bigwedge^i T_{\overline{\BQ}_l}(P_\alpha) [i]$
on $\CL_\alpha$.

As explained in \cite[Section 7.4.9]{Ngo} the first paragraph in Page 121,
an argument using weights shows that (\ref{step2_1}) passes through an action of the abelian variety part $T_{\overline{\BQ}_l}(A_\alpha)$ of the Tate module $T_{\overline{\BQ}_l}(P_\alpha)$.
 As a result, we have a graded module structure on $\CL_\alpha$ of the graded algebra $\Lambda_{A_\alpha}$ associated with the abelian scheme $A_\alpha$ in (\ref{step1_1}),
\begin{equation}\label{step3_1}
    \Lambda_{A_\alpha} \otimes \CL_\alpha \to \CL_\alpha.
\end{equation}

Note that we use the assumption that $\mathbf{k}$ is a finite field here.
\medskip

\noindent {\bf Step 4.} As commented in the paragraph after \cite[Proposition 7.4.10]{Ngo}, Proposition \ref{Prop1.3} can be deduced from the following proposition.

\begin{prop}\label{Prop1.5}
We follow the same notation as in Steps 1-3 above. Let $u_\alpha \in V_\alpha$ be any geometric point. Then the stalk $\CL_{\alpha, u_\alpha}$ of $\CL_\alpha$ is a free graded module of the graded algebra $\Lambda_{A_\alpha, u_\alpha}$ under the action (\ref{step3_1}). 
\end{prop}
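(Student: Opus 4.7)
The plan is to adapt Ng\^o's proof of \cite[Proposition 7.4.10]{Ngo} to the general self-dual setup of conditions (a,b,c). Write $T := T_{\overline{\BQ}_l}(A_\alpha)_{u_\alpha}$, a $\overline{\BQ}_l$-vector space of weight $-1$ and dimension $2(d - \delta_{Z_\alpha})$, and set $V := \CL_{\alpha, u_\alpha}$. The graded algebra $\Lambda := \Lambda_{A_\alpha, u_\alpha}$ acting on $V$ is identified with the exterior algebra $\bigoplus_i (\wedge^i T)[i]$, where $\wedge^i T$ is placed in cohomological degree $-i$. The goal is to show that $V$ is free over $\Lambda$.

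First, I would reduce freeness to a Poincar\'e-type pairing statement. For a finite-dimensional graded module over the exterior algebra $\wedge^* T$, freeness is equivalent to nondegeneracy of the pairing $V \otimes V \to (\det T)[-2\dim A_\alpha]$ obtained from top-exterior-power multiplication, or equivalently, together with the symplectic polarization on $T$ from hypothesis (iii) of a weak abelian fibration, to a hard Lefschetz isomorphism under the resulting $\mathfrak{sl}_2$-action. This purely algebraic reformulation turns the problem into constructing such a pairing on $V$ intrinsically from the data $(M,P,B,\CK)$.

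Second, I would produce such a pairing on $V$ from hypotheses (b) and (a) and constrain its range using (c). Applying Verdier duality to $Rf_* \CK$ and using the support decomposition, one obtains identifications $\CK^i_\alpha \simeq \BD(\CK_\alpha^{2\dim M - i})$. Restricted to the open subset $V_\alpha$ where $\CK^i_\alpha = \CL^i_\alpha[\dim Z_\alpha]$, and specialized at $u_\alpha$, this yields a perfect pairing on $V$ between graded pieces symmetric about $\dim M$. The relative dimension bound (c), via the standard-to-perverse conversion over the smooth locus $V_\alpha$, translates into the vanishing $\CL^i_\alpha = 0$ for $i > 2d + \dim Z_\alpha$; combined with the duality symmetry, the nonzero graded pieces of $V$ are confined to a window of width at most $2d - 2\delta_{Z_\alpha}$ centered at $\dim M$, which is exactly the width of a free $\Lambda$-module of top exterior degree $2(d - \delta_{Z_\alpha})$.

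Third, following the polarization argument of \cite[Section 7.4]{Ngo}, I would match the duality pairing from step two with the pairing induced by the action of the top-degree class of $\Lambda$ constructed via Proposition \ref{prop1.4}. Restricted at $u_\alpha$, the Chevalley decomposition in Step 1(ii) and the weight/purity considerations from (a) ensure that the affine radical acts trivially in the appropriate bidegree, so only the abelian quotient $A_\alpha$ contributes, and the top-degree multiplication by $\wedge^{2\dim A_\alpha} T$ reproduces, up to scalar, the Poincar\'e pairing constructed in step two. Nondegeneracy of the latter, which is forced by the width calculation in step two, yields nondegeneracy of the top-degree action, and by step one gives freeness of $V$ over $\Lambda$.

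The main obstacle I anticipate is the matching of the two pairings in step three, in particular ensuring that the self-duality isomorphism of hypothesis (b) is compatible with the $P$-equivariant structure of $\CK$, so that the trace of the $P$-action used to define $c$ in Proposition \ref{prop1.4} intertwines with the Verdier-dual trace of $f$. In Ng\^o's setting with $\CK = \overline{\BQ}_l$ this compatibility is automatic; for a general self-dual $P$-equivariant complex, it reduces to a coherence check analogous to the cocycle argument used in the proof of Proposition \ref{prop1.4}, which I expect to handle by working systematically in the $P$-equivariant derived category and using that the Verdier dual of a $P$-equivariant object carries a canonical $P$-equivariant structure.
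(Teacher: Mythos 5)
Your proposal diverges from the paper's argument in a way that I do not think can be repaired: the algebraic reduction in your first step is false, and the width bound in your second step is circular. Concretely, for $\Lambda=\wedge^{\bullet}T$ with $\dim T=2g\geq 2$, take $V$ to be two one-dimensional graded pieces placed in degrees $a$ and $a-2g$ with $T$ acting by zero; this carries a perfect pairing compatible with the (trivial) module structure and occupies a window of width exactly $2g$, yet it is not free. Even adding hard Lefschetz for the symplectic class $\omega\in\wedge^2T$ does not suffice: $V=\Lambda\oplus k$ with the extra copy of $k$ in the middle degree and killed by $T$ satisfies hard Lefschetz and carries a perfect pairing but is not free, because primitive middle-degree classes annihilated by $\omega$ are invisible to both conditions. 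So nondegeneracy of a duality pairing plus a degree window cannot detect freeness over the full exterior algebra. Moreover, the window you extract from condition (c) together with duality has width $2(d-\mathrm{codim}\,Z_\alpha)$, not $2(d-\delta_{Z_\alpha})$; identifying the two is exactly the support inequality (\ref{supp_dim}) that the whole chain Proposition \ref{Prop1.5} $\Rightarrow$ Proposition \ref{Prop1.3} $\Rightarrow$ Theorem \ref{supp_thm} is meant to establish. In Ng\^o's scheme the logic runs the other way: freeness is proved first by independent (geometric) means, it forces the amplitude \emph{lower} bound (\ref{amp_ineq}), and only then does comparison with the upper bound from (c) yield the codimension estimate.

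A symptom of the gap is that your argument never invokes axiom (ii) of a weak abelian fibration (affine stabilizers), which is precisely the hypothesis that makes freeness true and enters nowhere else. The paper's proof proceeds by descending induction on $\dim Z_\alpha$: the geometric input is Proposition \ref{prop1.6}, which establishes freeness of $\bigoplus_i H^i(X,\CE)[-i]$ over $\Lambda_A$ for an abelian variety $A$ acting on a projective $X$ with \emph{finite} stabilizers, via the smooth proper quotient $q\colon X\to X/A$, descent of $\CE$, and the projection formula $Rq_*\CE=Rq_*\overline{\BQ}_l\otimes\CE'$. This is applied to the fiber $M_{u_\alpha}$ through a quasi-lifting $A_{\alpha,u_\alpha}\to P_{\alpha,u_\alpha}$, giving freeness of the total cohomology $\BH$ of the fiber; the perverse Leray filtration from condition (a) then exhibits $\CL_{\alpha,u_\alpha}$ as the middle graded piece of a three-step filtration of $\BH$ whose outer pieces are free by the induction hypothesis (using polarizability, axiom (iii), as in \cite[Proposition 7.7.4]{Ngo}), and freeness of the middle piece follows from the Frobenius property of the exterior algebra. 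If you want to pursue your route, you would at minimum need to replace the pairing criterion by an actual proof that $\CL_{\alpha,u_\alpha}$ is free, and at that point you are forced back to the geometric quotient construction and the induction on supports.
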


We complete the proof of Proposition \ref{Prop1.5} in the next two sections.

\subsection{Freeness} In this section we prove the following proposition which generalizes \cite[Proposition 7.5.1]{Ngo}. Then in Section \ref{sec1.6} we eventually reduce Proposition \ref{Prop1.5} to Proposition \ref{prop1.6}.

\begin{prop}\label{prop1.6}
Assume $X$ is projective over $\mathbf{\overline{k}}$ which admits an action of an abelian variety $A$ over $\mathbf{\overline{k}}$ with finite stabilizers. Let $\CE \in D_c^b(X, \overline{\BQ}_l)$ be an $A$-equivariant object. Then the graded cohomology group
    \begin{equation}\label{prop6.1_1}
    \bigoplus_{i}H^i(X, \CE)[-i]
    \end{equation}
    is naturally a free graded module of the graded algebra $\Lambda_A=\oplus_iH^i(A,\overline{\BQ}_l)[-i]$.
\end{prop}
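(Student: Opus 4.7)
The plan is to realize $X$ as an $A$-torsor over the Deligne--Mumford quotient stack $\CY := [X/A]$ and to deduce freeness from the projection formula together with Deligne's degeneration theorem for smooth proper morphisms. Since $A$ acts on $X$ with finite stabilizers and $X$ is projective, $\CY$ is a proper Deligne--Mumford stack and the quotient morphism $q: X \to \CY$ is an $A$-torsor; in particular $q$ is smooth of relative dimension $g := \dim A$ and proper. By equivariant descent, the $A$-equivariant complex $\CE$ descends canonically to a complex $\overline{\CE} \in D^b_c(\CY, \overline{\BQ}_l)$ with $q^*\overline{\CE} \simeq \CE$.

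The heart of the argument is to compute $Rq_*\CE$. The projection formula for the proper morphism $q$ gives $Rq_*\CE \simeq \overline{\CE} \otimes^L Rq_*\overline{\BQ}_l$. Deligne's degeneration theorem applied to the smooth proper morphism $q$ produces a (non-canonical) decomposition $Rq_*\overline{\BQ}_l \simeq \bigoplus_i R^iq_*\overline{\BQ}_l[-i]$. Moreover, since $q$ is a torsor under the connected abelian variety $A$, the transition functions of the torsor act on a fiber $\simeq A$ by translations, and translations by points of a connected algebraic group act trivially on its cohomology (any such action factors through a morphism from the connected scheme $A$ into a discrete automorphism group, hence is constant). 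Hence the local systems $R^iq_*\overline{\BQ}_l$ are canonically trivial with fiber $H^i(A,\overline{\BQ}_l)$, and we obtain a global isomorphism $Rq_*\overline{\BQ}_l \simeq \overline{\BQ}_\CY \otimes \Lambda_A$. Applying $R\Gamma(\CY,-)$ yields
\[
\bigoplus_i H^i(X,\CE)[-i] \simeq \Big(\bigoplus_j H^j(\CY,\overline{\CE})[-j]\Big) \otimes \Lambda_A,
\]
which is manifestly a free graded $\Lambda_A$-module under right multiplication in the second tensor factor.

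The final step is to verify that the $\Lambda_A$-module structure on $H^*(X,\CE)$ induced by $A$-equivariance (encoded by the Hopf coaction $a^*: H^*(X,\CE) \to H^*(A) \otimes H^*(X,\CE)$ together with the Poincaré self-duality of $\Lambda_A$) coincides with right multiplication on the $\Lambda_A$-factor under the isomorphism above. This is the main obstacle: Deligne's theorem provides a vector-space splitting essentially for free, but matching the module structures requires a diagram chase threading the multiplication map $\mu: A \times A \to A$ through the projection formula and the torsor structure of $q$, in the spirit of Proposition \ref{prop1.4}. Once set up, compatibility reduces to a fiberwise check: on a geometric fiber $q^{-1}(y) \simeq A$ the coaction recovers the regular $\Lambda_A$-coaction on $H^*(A) = \Lambda_A$, and naturality in $y$ propagates the identification to the global statement, completing the proof.
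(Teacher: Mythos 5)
Your overall strategy --- pass to the quotient stack $[X/A]$, descend $\CE$ to a complex $\overline{\CE}$, use the projection formula for the smooth proper morphism $q$ to write $Rq_*\CE \simeq Rq_*\overline{\BQ}_l \otimes \overline{\CE}$, split $Rq_*\overline{\BQ}_l$ into its shifted cohomology sheaves, and identify each $R^iq_*\overline{\BQ}_l$ as the trivial local system with fiber $H^i(A,\overline{\BQ}_l)$ --- is exactly the paper's. The gap is in your final step, and although you correctly locate the difficulty, you do not close it. The isomorphism $Rq_*\overline{\BQ}_l \simeq \overline{\BQ}_{\CY}\otimes\Lambda_A$ you produce is \emph{non-canonical}: it depends on a choice of splitting of the complex into its shifted cohomology sheaves. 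There is no reason such a choice intertwines the $\Lambda_A$-action on $Rq_*\CE$ (which acts through the first tensor factor via the group action) with right multiplication on the $\Lambda_A$-factor; a priori the action of a positive-degree class of $H^*(A)$ can have components that fail to respect the chosen splitting. Your proposed repair --- a fiberwise check plus ``naturality in $y$'' --- only identifies the induced module structure on the cohomology sheaves $R^iq_*\overline{\BQ}_l$, i.e.\ on the associated graded of the truncation filtration; it says nothing about whether the chosen global splitting is itself a map of $\Lambda_A$-modules, which is precisely the content you need. In effect you are proving freeness by exhibiting an isomorphism with a free module, but the isomorphism you exhibit is, as it stands, only one of graded vector spaces.

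The paper sidesteps this by never asserting that the splitting is module-compatible. It instead uses the canonical filtration $\bigl(\tau_{\leq \ast}Rq_*\overline{\BQ}_l\bigr)\otimes\overline{\CE}$, which \emph{is} stable under the $\Lambda_A$-action, notes that the associated spectral sequence degenerates (because the splitting exists), and concludes that $H^*(X,\CE)$ carries a finite $\Lambda_A$-stable filtration whose graded pieces $H^j(Y,\overline{\CE})\otimes\bigl(\bigoplus_i H^i(A,\overline{\BQ}_l)\bigr)$ are free; freeness of the total module then follows since an iterated extension of free graded $\Lambda_A$-modules is free (projectives over the graded-local exterior algebra are free, and extensions of projectives split). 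Replacing your last paragraph with this filtration argument makes the proof correct and identical to the paper's. Two minor further points: in positive characteristic the stabilizers may be non-reduced, so $[X/A]$ should only be claimed to be an Artin stack with finite inertia rather than Deligne--Mumford; and the degeneration of $Rq_*\overline{\BQ}_l$ over a stack requires the decomposition-theorem technology for Artin stacks (e.g.\ cup product with a relative ample class), as the paper notes in a footnote.
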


\begin{proof}
Since the $A$-action preserve the connected components of $X$, we may assume that $X$ is connected. We consider the quotient map
\[
q: X \to Y := X/A
\]
with $X/A$ an Artin stack with finite inertia. Thanks to the projectivity of $A$, the morphism $q$ is smooth and proper. For the $A$-equivariant object $\CE$, there exists an object $\CE'$ on $Y$ such that 
\[
q^* \CE' = \CE,
\]
and the projection formula yields
\begin{equation}\label{RfE}
Rq_* \CE = Rq_* \overline{\BQ}_l \otimes \CE'.
\end{equation}
In particular, the complex $Rq_*\CE$ admits a natural $\Lambda_A$-action through the first factor of the righthand side of (\ref{RfE}). This shows that (\ref{prop6.1_1}) is a natural graded $\Lambda_A$-module.

Now since $q$ is smooth and proper, we have a decomposition\footnote{As explained in the proof of \cite[Proposition 7.5.1]{Ngo}, the decomposition here is induced by the cup-product with an relative ample class. We also refer to \cite{Sun} as a general reference for the decomposition theorem for Artin stacks with affine stabilizers.}
\begin{equation}\label{prop1.6_2}
Rq_* \overline{\BQ}_l \simeq \bigoplus_i R^iq_*\overline{\BQ}_l[-i].
\end{equation}
Moreover, we consider the Cartesian diagram 
\[
\begin{tikzcd}
A\times X \arrow[r,"q'"] \arrow[d, "q'"]
& X \arrow[d, "q"] \\
X \arrow[r, "q"]
& Y.
\end{tikzcd}
\]
with all the arrows smooth maps. By the base change, we obtain the canonical $A$-equivariant isomorphism of local systems 
\begin{equation}\label{323232}
q^*R^iq_*\overline{\BQ}_l =R^i{q'}_*( {q'}^*\overline{\BQ}_l).
\end{equation}
We have that ${q'}^*\overline{\BQ}_l$ is a trivial local system of rank 1 and an $A$-equivariant structure on it is trivial by the connectedness of $A$ (\emph{c.f.} \cite[Lemma A.1.2]{Zhu}). In particular $q^*R^iq_*\overline{\BQ}_l$ is a trivial local system equipped with the trivial $A$-equivariant structure by (\ref{323232}). Consequently, each $R^iq_*\overline{\BQ}_l$ is canonically isomorphic to the trivial local system taking value in $H^i(A, \overline{\BQ}_l)$. The filtration $(\tau_{\leq *}Rq_* \overline{\BQ}_l) \otimes \CE'$ of $Rq_*\CE$ induces the following spectral sequence
\[
H^j(Y, R^iq_*\overline{\BQ}_l \otimes \CE') = H^j(Y, \CE')\otimes H^i(A, \overline{\BQ}_l) \Rightarrow  H^{i+j}(X, \CE),
\]
which degenerates thanks to (\ref{RfE}) and the decomposition (\ref{prop1.6_2}). Hence we obtain a filtration stable under the $\Lambda_A$-action whose graded pieces are the free graded $\Lambda_A$ modules
\[
H^j(Y, \CE')\otimes \left( \bigoplus_i H^i(A, \overline{\BQ}_l)\right).
\]
This proves the freeness of the entire module $H^*(X, \CE)=\bigoplus_{i}H^i(X, \CE)[-i]$.
\end{proof}

\subsection{Proof of Proposition \ref{Prop1.5}}\label{sec1.6}
We deduce Proposition \ref{Prop1.5} from Proposition \ref{prop1.6} by a descending induction on the dimension of the support $Z_\alpha$. This is parallel to \cite[Section 7.7]{Ngo}.

We complete the induction in the following 3 steps.
\medskip

\noindent {\bf Step A.} The induction base follows from Proposition \ref{prop1.6} which we explain as follows. We assume $Z_{\alpha_0} = B_{\overline{\bf k}}$ and $V_{\alpha_0}$ is an open dense subset of $Z_{\alpha_0}$ as in Step 1 of Section \ref{Sec1.4}. All the other $Z_\alpha$ with $\alpha \neq \alpha_0$ does not intersect with $V_{\alpha_0}$, and 
\[
{^\mathfrak{p}\CH}^i(Rf_* \CK)|_{V_{\alpha_0}} = \CL^i_{\alpha_0}[\mathrm{dim} B].
\]
Therefore, for any geometric point $u_{\alpha_0}$ of $V_{\alpha_0}$ with 
\[
\iota_{u_{\alpha_0}}:M_{u_{\alpha_0}} \hookrightarrow M
\]
the corresponding fiber, we have the identification
\begin{equation}\label{StepA_1}
  \bigoplus_i \CL^i_{\alpha_0,u_{\alpha_0}}[-i+\mathrm{dim}B] = \bigoplus_i H^i\left(M_{u_{\alpha_0}}, \iota_{u_{\alpha_0}}^*\CK\right)[-i] 
\end{equation}
by the base change. Parallel to Step 3 of Section \ref{Sec1.4}, (\ref{StepA_1}) admits a natural $\Lambda_{A_{{\alpha_0},u_{\alpha_0}}}$-action induced by the action of $\Lambda_{P_{\alpha_0},u_{\alpha_0}}$.

As explained in the last paragraph of \cite[Section 7.7.1]{Ngo}, we may assume that the geometric point $u_{\alpha_0}$ is defined over a finite field. So there exists a quasi-lifting $A_{\alpha_0, u_{\alpha_0}} \to P_{\alpha_0, u_{\alpha_0}}$ (see \cite[Proposition 7.5.3]{Ngo}) such that the $\Lambda_{A_{\alpha_0}, u_{\alpha_0}}$-action on (\ref{StepA_1}) is induced by the $A_{{\alpha_0}, u_{\alpha_0}}$-action on $M_{u_{\alpha_0}}$. By the axiom (ii) of weak abelian fibrations, the $A_{{\alpha_0}, u_{\alpha_0}}$-action on $M_{u_{\alpha_0}}$ passing through $P_{\alpha_0, u_{\alpha_0}}$ has finite stabilizers. Hence Proposition \ref{prop1.6} implies that (\ref{StepA_1}) is free over $\Lambda_{A_{\alpha_0}, u_{\alpha_0}}$. This completes the proof of the induction base.
\medskip

\noindent {\bf Step B.} Since Proposition \ref{Prop1.5} is a local statement, we may work with a strictly Henselian base. Assume that $B_\alpha$ is the strict Henselization of a geometric point $u_\alpha$ defined over a finite field lying in $V_{\alpha} \subset Z_\alpha$. By the choice of $V_\alpha$ in Step 1 of Section \ref{Sec1.4}, the stalk $\CK^i_{\alpha',u_\alpha}$ is non-zero only if $Z_\alpha$ is strictly contained in $Z_{\alpha'}$. In this case, the induction assumption implies that, for any $m \in \BZ$, the graded $\overline{\BQ}_l$-vector space
\[
\bigoplus_{i}H^m\left( \CK^i_{\alpha',u_\alpha} \right)[-i]
\]
is equipped with a natural free $\Lambda_{A_\alpha,{u_\alpha}}$-action induced by (\ref{prop1.4_1}). This is explained in \cite[Proposition 7.7.4]{Ngo} which essentially relies on the polarizability of $P$, \emph{i.e.} the axiom (iii) of weak abelian fibrations.\footnote{Since this part only concerns the group scheme $P$, the proof of \cite[Proposition 7.7.4]{Ngo} applies identically here.}
\medskip

\noindent {\bf Step C.} We complete the induction argument.

The condition (a) of Theorem \ref{supp_thm} (\emph{i.e.}, the decomposition theorem for $Rf_*\CK$) implies the degeneracy of the spectral sequence
\begin{equation}\label{ss1}
H^j({^\mathfrak{p}\CH}^i(Rf_* \CK)_{u_\alpha}) \Rightarrow H^{i+j}\left(M_{u_\alpha}, \iota_{u_\alpha}^* \CK\right)
\end{equation}
where $\iota_{u_\alpha}: M_{u_\alpha} \hookrightarrow M$ is the geometric fiber over $u_\alpha$. This induces a $\Lambda_{A_{\alpha,{u_\alpha}}}$-stable filtration $F^\bullet \BH$ on the total cohomology
\[
\BH : = \bigoplus_{i} H^{i}\left(M_{u_\alpha}, \iota_{u_\alpha}^* \CK\right)[-i]
\]
whose $m$-th graded piece is
\begin{equation}\label{gradedpiece}
F^m\BH/F^{m+1}\BH = \bigoplus_{i} H^m\left(  {^\mathfrak{p}\CH}^i(Rf_* \CK)_{u_\alpha}\right) [-i-m].
\end{equation}
In addition, we have the following:
\begin{enumerate}
\item[(1)] By picking a quasi-lifting $A_{\alpha,u_{\alpha}} \rightarrow P_{\alpha, u_\alpha}$ as in the last paragraph of Step A, it follows from Proposition \ref{prop1.6} and the $P$-equivariance of $\CK$ that $\BH$ is a free graded $\Lambda_{A_{\alpha, u_\alpha}}$-module.
    \item[(2)] Since 
    \[
   \bigoplus_i {^\mathfrak{p}\CH}^i(Rf_* \CK)\big{|}_{B_\alpha} [-i]= \bigoplus_{\alpha'}\bigoplus_{i}\CK^i_{\alpha',u_{\alpha}}[-i],
    \]
    the graded piece (\ref{gradedpiece}), as a graded $\Lambda_{A_{\alpha, u_\alpha}}$-module, is a direct sum of the graded $\Lambda_{A_{\alpha, u_\alpha}}$-modules 
    \[
     \bigoplus_{i}H^m\left(  \CK^i_{\alpha',u_\alpha}\right)[-i-m]
    \]
    over all $\alpha'$ with $Z_\alpha \subset Z_{\alpha'}$
    \item[(3)] By Step B, the induction assumption implies that each
    \[
   \bigoplus_{i} H^m\left(  \CK^i_{\alpha',u_\alpha} \right)[-i]
    \]
    is a free graded $\Lambda_{A_{\alpha, u_\alpha}}$-module when $Z_\alpha \subset Z_{\alpha'}$.
    \item[(4)] The graded $\Lambda_{A_{\alpha},u_\alpha}$-module vanishes:
    \[
    \bigoplus_i H^m\left( \CK^i_{\alpha, u_\alpha} \right)[-i]= \bigoplus_i H^{m+\mathrm{dim}V_\alpha}\left( \CL^i_{\alpha,u_\alpha}\right)[-i] =0
    \]
    if $m\neq -\mathrm{dim}V_\alpha$ for degree reasons, since $\CL^i_{\alpha,u_\alpha}$ is a sky-scraper sheaf supported at $u_\alpha$.
\end{enumerate}
Recall the filtration $F^\bullet\BH$ associated with the spectral sequence (\ref{ss1}) whose grades pieces are given by (\ref{gradedpiece}). We arrive at exactly the situation of the last paragraph in \cite[Page 131]{Ngo}: the spectral sequence (\ref{ss1}) induces a 3-layer filtration of $\Lambda_{A_\alpha, u_\alpha}$-modules
\[
0 \subseteq F^{n+1}\BH \subseteq F^n\BH \subseteq \BH, \quad \quad n = -\mathrm{dim}V_\alpha = -\mathrm{dim}Z_\alpha,
\]
where
\begin{enumerate}
    \item[$\bullet$] $\BH$ is free by (1), and
    \item[$\bullet$] $F^{n+1}\BH$ and $\BH/{F^n\BH}$ are free by (3) and (4). In fact, (4) ensures that  \[
    \bigoplus_i H^m\left( \CK^i_{\alpha, u_\alpha} \right)[-i]
    \]
    vanishes when $m \neq n$, and therefore $F^m\BH/F^{m+1}\BH$ is free by (3).
\end{enumerate}
This implies the freeness for 
\[
F^n\BH/{F^{n+1}\BH} = \left( \bigoplus_i \CL^i_{\alpha,u_\alpha}[-i-n]\right) \oplus \left( \bigoplus_{\alpha' \neq \alpha} \bigoplus_i H^n\left(  \CK^i_{\alpha',u_\alpha}\right)[-i-n] \right)
\]
which completes the induction; see \cite[Page 131-132]{Ngo}.  
\qed

\begin{rmk}
We fixed some minor typos in \cite[Section 7.7.2]{Ngo}: the correct formula for the $m$-th graded piece \cite[Page 131, Line 9]{Ngo} of the Leray spectral sequence is 
\[
H^m\left( \bigoplus_n{^\mathfrak{p}\CH}^n(Rf_* \overline{\BQ}_l)_{s_0} \right)[-n-m],
\]
which is not equal to 
\[
H^m\left( \bigoplus_n{^\mathfrak{p}\CH}^n(Rf_* \overline{\BQ}_l[-n])_{s_0} \right)[-m]
\]
as stated in \cite{Ngo}. As consequences, the following statements in the last paragraph of \cite[Page 131]{Ngo} are incorrect:
\begin{enumerate}
    \item[$\bullet$] for $\alpha' \neq \alpha$, we have that $H^m(\bigoplus_{n\in \BZ} \CK^n_{\alpha',u_\alpha}[-n])$ is a free $\Lambda_{A_{u_\alpha}}$-module;
    \item[$\bullet$] For $\alpha = \alpha'$, we have that $H^m(\CK_{\alpha,u_\alpha})=0$ unless $m = - \mathrm{dim}Z_\alpha$;
\end{enumerate}
whose corrected versions are given in (2,3,4) of Step C above.
\end{rmk}

\subsection{Spread out for $\BC$}
\label{spread_out}

As a corollary of Theorem \ref{supp_thm}, we have the following theorem concerning the intersection cohomology complex of a weak abelian fibration $(M, P, B)$ over the complex numbers $\BC$.

\begin{thm}\label{thm1.8}
Let $(M,P,B)$ be a weak abelian fibration over $\BC$ of relative dimension $d$, \emph{i.e.}, the triple satisfies (i,ii,iii) of Section \ref{sec1.1}. Assume that 
\begin{equation}\label{bound}
\tau_{>2d}\left( Rf_* \mathrm{IC}_M[-\mathrm{dim}M]\right) =0.
\end{equation}
Then any support $Z$ of the decomposition for $Rf_*\mathrm{IC}_M$ satisfies the inequality
\[
\mathrm{codim}Z \leq \delta_Z.
\]
\end{thm}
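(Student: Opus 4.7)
The plan is to deduce Theorem \ref{thm1.8} from Theorem \ref{supp_thm} by the standard spread-out technique. First I would choose a finitely generated regular $\BZ$-subalgebra $R \subset \BC$ in which $\ell$ is invertible, and over which the whole package $(M, P, B, a, f)$ descends to $R$-schemes $(M_R, P_R, B_R, a_R, f_R)$. After shrinking $\Spec R$, we may assume $f_R$ is proper and $g_R\colon P_R \to B_R$ is smooth of pure relative dimension $d$ with geometrically connected fibers, affine stabilizers, and polarizable Tate module, so that the $R$-family is a relative weak abelian fibration. By the generic base change theorem for constructible complexes, a further shrinking ensures that the formation of $Rf_{R*}\mathrm{IC}_{M_R}$ commutes with arbitrary base change, and that the assumption (\ref{bound}) in characteristic zero spreads out to the whole $R$-family.

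Next I would pick a closed point $s \in \Spec R$ with residue field $\kappa = \BF_q$. The fiber $(M_s, P_s, B_s)$ is then a weak abelian fibration of relative dimension $d$ over $\kappa$, and by the above base change statement the fiber of $Rf_{R*}\mathrm{IC}_{M_R}$ at $s$ is $Rf_{s*}\mathrm{IC}_{M_s}$. Setting $\CK = \mathrm{IC}_{M_s}[-\dim M_s]$, I now verify the hypotheses of Theorem \ref{supp_thm}: condition (a) is the BBD decomposition theorem applied to the proper pushforward of a pure complex over $\kappa$; condition (b) is Verdier self-duality of the IC complex; condition (c) is inherited from the spread-out version of (\ref{bound}); and $P_s$-equivariance of $\mathrm{IC}_{M_s}$ is canonical. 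Theorem \ref{supp_thm} therefore yields the inequality $\mathrm{codim}\, Z_s \leq \delta_{Z_s}$ for every support $Z_s$ of the decomposition of $Rf_{s*}\CK$ on $B_s$.

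Finally I transport this inequality from $\kappa$ back to $\BC$. Both the set of supports of the decomposition and the $\delta$-function are intrinsic constructible data on $B_R$: a support is a component of the closed locus determined by the jumping of the perverse cohomology sheaves of $Rf_{R*}\mathrm{IC}_{M_R}$, while $\delta$ is determined by the Chevalley decomposition of the smooth commutative group scheme $P_R \to B_R$. After a further shrinking of $\Spec R$, both are preserved when passing between the generic geometric fiber and the closed fiber over $s$. Combined with Artin's comparison between $\ell$-adic and complex-analytic intersection complexes on the generic fiber, this matches up supports and their codimensions over $\BC$ and over $\kappa$, so the finite-field inequality transfers to the desired statement over $\BC$.

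The main obstacle is this last matching of supports. Although the BBD decomposition is itself non-canonical, the set of supports is canonical, being the set of closures of the open strata in the stratification defined by the jumping loci of the perverse cohomology sheaves; establishing that this stratification is constructible in the $R$-family and is stable under specialization (after the shrinkings above) is the technical heart of the argument, but is routine given generic base change and the compatibility of the decomposition theorem with middle extension from a dense open.
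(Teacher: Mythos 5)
Your proposal is correct and follows essentially the same route as the paper: spread out the weak abelian fibration and the bound (\ref{bound}) over a characteristic-zero base, specialize to a finite field, apply Theorem \ref{supp_thm} to $\CK=\mathrm{IC}_M[-\dim M]$ (with (a), (b), (c) supplied by the decomposition theorem, Verdier self-duality, and the spread-out bound), and transport the support inequality back to $\BC$. The paper phrases the final step contrapositively (a violating support over $\BC$ would spread out to a violating support over a general closed point) rather than via your direct matching of support strata, but this is the same argument.
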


\begin{proof}
By a standard spreading out argument (see for example \cite[Section 6]{BBD}), we may reduce Theorem \ref{thm1.8} to the same statement over finite fields. More precisely, we spread out the weak abelian fibration $(M, P, B)$ over $\mathrm{Spec}R$ where $R$ is a DVR of characteristic 0, such that the geometric fiber over a general prime $\mathfrak{p} \in \mathrm{Spec}R$ is a weak abelian fibration in characteritic $p$ as in the beginning of Section \ref{sec1.1}. Moreover, the condition (\ref{bound}) holds over a general prime in $\mathrm{Spec}R$. Therefore, if a support $Z$ of the decomposition theorem associated with $(M, P, B)$ violates the inequality $\mathrm{codim}Z \leq \delta_Z$, then by spreading out this inequality is violated by a support over a general prime $\mathfrak{p} \in \mathrm{Spec}R$ as well, which contradicts our assumption that Theorem \ref{thm1.8} holds over finite fields..

In our setting, note that the complex 
\[
\CK = \mathrm{IC}_M[-\mathrm{dim}M]
\]
is $P$-equivariant, which satisfies (a,b,c) of Theorem \ref{supp_thm} by the decomposition theorem, Verdier duality, and the condition (\ref{bound}) respectively. Hence we conclude Theorem \ref{thm1.8} from Theorem \ref{supp_thm}.
\end{proof}

In order to apply the support theorem to the intersection cohomology complex for a weak abelian fibration with \emph{singular} ambeint space $M$, the crucial point is to verify the ``relative dimension bound" (\ref{bound}). We discuss systematically in the next two sections how to obtain such a bound for the moduli of 1-dimensional sheaves and the moduli of semistable Higgs bundles.

\section{Moduli of 1-dimensional sheaves and Higgs bundles}\label{Sec2}

\subsection{Overview}
Throughout the rest of the paper, we work over the complex numbers $\BC$. We show in this section that the morphisms (\ref{0.2_1}) and (\ref{0.2_2}) admit the structures of weak abelian fibrations.

A crucial techinical result is Proposition \ref{prop2.5} concerning a dimension bound for certain moduli of pure 1-dimensinoal sheaves. As a consequence, we verify in Theorem \ref{thm2.3} the irreducibility of the moduli spaces $M^L_{\beta,\chi}$ of (A) which may be of independent interest. 

The dimension bound given by Proposition \ref{prop2.5} will be used again in Section 3 which plays an important role in the proof of our main theorems.

\subsection{Curves in del Pezzo surfaces}\label{Sec2.2}
Let $S$ be a del Pezzo surface, \emph{i.e.}, a nonsingular projective surface with $-K_S$ ample.

%Assume $\beta \in \mathrm{Pic}(S)$ is a curve class on $S$.

\begin{lem}\label{lem2.1}
Let $E$ be an effective divisor on $S$. Then
\[
\mathrm{dim}H^1(S, \CO_S(E)) = \mathrm{dim}H^2(S, \CO_S(E)) =0.
\]
In particular, we have $\mathrm{dim}H^0(S, \CO_S(E)) = \frac{1}{2}E\cdot(E-K_S)+1$.
\end{lem}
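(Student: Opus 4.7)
The plan is to combine Serre duality with Kodaira vanishing on the del Pezzo surface $S$, and then read off the $H^0$ dimension from Riemann--Roch.

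For the vanishing of $H^2(S, \CO_S(E))$, I would apply Serre duality to identify $H^2(S, \CO_S(E))$ with $H^0(S, \CO_S(K_S - E))^*$, and then show that the class $K_S - E$ admits no effective representative. The key computation is the intersection with the ample class $-K_S$:
\[
(K_S - E) \cdot (-K_S) \;=\; -K_S^2 \,-\, E \cdot (-K_S),
\]
which is strictly negative because $K_S^2 > 0$ (as $-K_S$ is ample) and $E \cdot (-K_S) \geq 0$ (effectivity of $E$ paired with an ample class). A strictly negative intersection with an ample class rules out any effective representative, so $H^0(S, \CO_S(K_S - E)) = 0$.

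For the vanishing of $H^1(S, \CO_S(E))$, I would rewrite $\CO_S(E) = \omega_S \otimes \CO_S(E - K_S)$ and invoke Kodaira vanishing, provided $E - K_S = E + (-K_S)$ is ample. In the context of \S\ref{Sec2.2}, $E$ will be a member of an ample curve class $\beta$ on $S$, so the sum of the two ample classes $E$ and $-K_S$ is itself ample; Kodaira then yields $H^i(S, \omega_S \otimes L) = 0$ for all $i > 0$ when $L$ is ample. The anticipated obstacle is securing enough positivity on $E - K_S$ to trigger Kodaira, since effectivity alone is not enough to guarantee ampleness; this is the one place where the structural context of Section~\ref{Sec2.2} (ample curve class) must be used.

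With both higher cohomologies vanishing, Riemann--Roch on surfaces produces
\[
\dim H^0(S, \CO_S(E)) \;=\; \chi(\CO_S(E)) \;=\; \chi(\CO_S) + \tfrac{1}{2}\, E \cdot (E - K_S).
\]
Since every del Pezzo surface is rational, $\chi(\CO_S) = 1$, and the stated formula follows immediately.
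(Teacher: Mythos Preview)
Your $H^2$ argument is correct and matches the paper's. The gap is in the $H^1$ step.

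You invoke Kodaira vanishing for $\CO_S(E) = \omega_S \otimes \CO_S(E - K_S)$, which requires $E - K_S$ to be ample, and you secure this only by assuming $E$ itself lies in an ample class. But Lemma~\ref{lem2.1} is stated for an \emph{arbitrary} effective divisor $E$, and the paper genuinely needs this generality: in Section~\ref{sec4.3} the invariant $\Phi_{\beta_i}$ is computed via Lemma~\ref{lem2.1} for each component class $\beta_i$, and these are only required to contain an integral curve (Proposition~\ref{prop2.10}), so $\beta_i$ can be a $(-1)$-curve. For such $E$ the class $E - K_S$ is not ample (it has zero intersection with $E$), so Kodaira does not apply. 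Upgrading to Kawamata--Viehweg does not save the argument either: take $E = 2C$ with $C$ a $(-1)$-curve; then $(E - K_S)\cdot C = -2 + 1 = -1 < 0$, so $E - K_S$ is not even nef.

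The paper avoids any positivity hypothesis on $E$ by working on the Serre-dual side. One shows $H^1(S,\CO_S(K_S - E)) = 0$ via the restriction sequence
\[
0 \to \CO_S(K_S - E) \to \CO_S(K_S) \to \CO_E(K_S) \to 0,
\]
using only that $-K_S$ is ample: this forces $K_S$ to have negative degree on every irreducible component of $E$, whence $H^0(E,\CO_E(K_S)) = 0$, and $H^1(S,\omega_S) = 0$ by Serre duality. No positivity of $E$ enters. Your Riemann--Roch conclusion is fine once the vanishing is in place.
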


\begin{proof}
By Serre duality, we obtain
\[
H^2(S, \CO_S(E)) = H^0(S, \CO_S(K_S-E))^\vee
 =0.\]
Now we prove the vanishing of 
\begin{equation}\label{eqn2.2_1}
H^1(S, \CO_S(E))^\vee  = H^1(S, \CO_S(K_S-E)).
\end{equation}
We consider the short exact sequence
\[
0\to \CO_S(K_S-E) \to \CO_S(K_S) \to \CO_E(K_S) \to 0
\]
which induces the long exact sequence
\begin{equation}\label{eqn2.2_2}
\cdots \to H^0(S, \CO_E(K_S)) \to H^1(S, \CO_S(K_S-E)) \to H^1(S, \CO_S(K_S)) \to \cdots. 
\end{equation}
The vanishing $H^0(S, \CO_E(K_S)) = 0$ follows from $\mathrm{deg}_E(K_S)<0$, and Serre duality yields the vanishing $H^1(S, \CO_S(K_S))=0$. Hence (\ref{eqn2.2_2}) implies the vanishing of (\ref{eqn2.2_1}). 

The last statement follows from the Riemann--Roch formula.
\end{proof}

Let $\beta$ be an ample and effective class on $S$. Then Lemma \ref{lem2.1} implies that the base $B = \BP H^0(S, \CO_S(\beta))$ of (\ref{0.2_1}) is of dimension
\[
\mathrm{dim}B = \frac{1}{2}\beta\cdot(\beta- K_S).
\]
We define $\pi: \CC_B \to B$ to be the universal curve for the linear system $|\beta|$. Since $\beta$ is ample, it is base point free on the del Pezzo surface $S$. Hence Bertini theorem implies that a general member of $|\beta|$ is a nonsingular and integral curve of genus
\[
g_\beta = \frac{1}{2}\beta\cdot(\beta + K_S)+1.
\]
In particular, there exists a Zariski open dense subset $U \subset S$ such that the restriction of $\CC_B$ to $U$ is smooth 
\[
\pi: \CC \to U \subset B.
\]

We consider the relative degree 0 Picard variety
\[
P:= \mathrm{Pic}^0(\CC_B/B)
\]
parameterizing line bundles on the fibers of $\pi: \CC_B \to B$ whose restrictions to each irreducible component are of degree 0. The projection morphism
\[
\pi_P: P \to B
\]
has fibers of pure dimensions $g_\beta$. The restriction of $P$ to $U$ gives a smooth abelian scheme
\[
\pi_P: P_U\left(:=\mathrm{Pic}^0(\CC_U/U)\right) \to U.
\]
We also consider the relative degree $e$ Picard variety over $U$
\[
\pi_{P^e}: P^e_U\big{(}:=\mathrm{Pic}^e(\CC_U/U)\big{)}  \to U
\]
for any integer $e$. We recall the following well-known fact (see \cite[Lemma 1.3.5]{dCHM1}) concerning the variation of Hodge structures for Picard varieties of smooth curves.

\begin{prop}\label{prop2.2}
For any $e\in \BZ$, we have an isomorphism of variations of Hodge structures on $U$:
\[
R^i{\pi_{P^e}}_* \BQ_{P^e_U} \simeq \wedge^iR^1\pi_* \BQ_{\CC}.
\]
\end{prop}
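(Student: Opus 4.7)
My plan is to break the isomorphism into three elementary steps, following the standard Hodge-theoretic construction of the relative Jacobian. First I would reduce to the case $e = 0$. The map $\pi_{P^e} : P^e_U \to U$ is a torsor under the abelian scheme $\pi_P : P_U \to U$, hence becomes trivial \'etale-locally on $U$. Since translation by an element of an abelian variety acts as the identity on its rational cohomology, any two such trivializations yield the same identification of stalks of $R^i \pi_{P^e,*}\BQ$, producing a canonical isomorphism $R^i\pi_{P^e,*}\BQ \simeq R^i\pi_{P,*}\BQ$ of local systems on $U$. This isomorphism preserves Hodge filtrations, which are intrinsic to the complex structure on each fiber.

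Second, for the abelian scheme $\pi_P : P_U \to U$ of relative dimension $g_\beta$, I would invoke the classical fact that the cup product supplies a canonical isomorphism of variations of Hodge structures
\[
\bigwedge\nolimits^{i} R^1\pi_{P,*}\BQ \simeq R^i \pi_{P,*}\BQ
\]
on $U$, reflecting fiberwise that the rational cohomology of a complex torus is the exterior algebra on its $H^1$, compatibly with the Hodge decomposition.

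Third, I would identify $R^1\pi_{P,*}\BQ$ with $R^1\pi_*\BQ_\CC$ as variations of Hodge structures on $U$. Fiberwise over $u \in U$, the exponential sequence on $\CC_u$ realizes $P_u = \mathrm{Pic}^0(\CC_u)$ as the complex torus $H^1(\CC_u, \CO_{\CC_u})/H^1(\CC_u, \BZ)$, so $H_1(P_u, \BZ) \simeq H^1(\CC_u, \BZ)$. Dualizing and applying Poincar\'e duality on the curve $\CC_u$ should then yield a canonical isomorphism $H^1(P_u, \BQ) \simeq H^1(\CC_u, \BQ)$, with matching Hodge filtrations because on both sides $F^1$ is identified with $H^0(\CC_u, \Omega^1_{\CC_u})$ via Serre duality $H^0(\CC_u, \Omega^1_{\CC_u}) \simeq H^1(\CC_u, \CO_{\CC_u})^\vee$. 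All of these constructions are natural in the smooth proper family $\pi : \CC \to U$, so they globalize to the required isomorphism of variations of Hodge structures.

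The main (but entirely standard) bookkeeping issue will be tracking the Tate twists that arise from dualization and from Poincar\'e duality in the third step; this is carried out carefully in \cite[Lemma 1.3.5]{dCHM1}, to which I would defer for the precise Hodge-theoretic verification.
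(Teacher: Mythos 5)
Your proposal is correct: the paper gives no argument for Proposition \ref{prop2.2} at all, simply citing \cite[Lemma 1.3.5]{dCHM1} as a well-known fact, and your three-step sketch (triviality of the $P_U$-torsor $P^e_U$ on rational cohomology since translations act trivially, the exterior-algebra description of the cohomology of an abelian scheme via cup product, and the Abel--Jacobi/Poincar\'e duality identification of the $R^1$'s up to a Tate twist, which the paper explicitly suppresses throughout) is exactly the standard argument underlying that reference, to which you also defer for the bookkeeping. Nothing further is needed.
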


\subsection{Moduli spaces of 1-dimensional sheaves}
Now we assume that $S$ is a \emph{toric} del Pezzo surface with a polarization $L$. The moduli space $M^L_{\beta,\chi}$ parameterizes $S$-equivalence classes of pure 1-dimensional (Gieseker-)semistable sheaves $\CF$ on $S$ with
\[
\mathrm{supp}(\CF) = \beta, \quad \chi(\CF)=\chi.
\]
Here the semistability is with respect of the slope function
\[
\mu(\CE) = \frac{\chi(\CE)}{c_1(\CE)\cdot L}.
\]

We recall the Hilbert--Chow morphism
\[
h: M^L_{\beta,\chi} \to B, \quad \CF \mapsto \mathrm{supp}(\CF)
\]
defined by taking the Fitting support \cite{LeP}. The open subvariety $h^{-1}(U) \subset M^L_{\beta,\chi}$ parameterizes line bundles supported on the nonsingular curves in $|\beta|$. Hence every fiber of $h$ over a closed point $b \in U$ is an abelian variety of dimenision $g_\beta$ and we have
\begin{equation}\label{eqn2.3_1}
h^{-1}(U) = \mathrm{Pic}^e(\CC_U/U), \quad e= \chi-1+g_\beta.
\end{equation}
The moduli space $M^L_{\beta,\chi}$ can be viewed as a compatification of the relative Picard variety (\ref{eqn2.3_1}).

The following theorem is of independent interest, and we postpone its proof to Section \ref{Sec2.6}.

\begin{thm}\label{thm2.3}
The moduli space $M^L_{\beta,\chi}$ is irreducible of dimension
\[
\mathrm{dim}M^L_{\beta,\chi} = \beta^2+1 = \mathrm{dim}B +g_\beta.
\]
\end{thm}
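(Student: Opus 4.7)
The plan is to exploit the Hilbert--Chow map $h\colon M^L_{\beta,\chi} \to B$, together with the dimension estimate of Proposition \ref{prop2.5}, to pin down $M^L_{\beta,\chi}$ as the closure of the smooth open locus $h^{-1}(U)$. Since $B = \BP H^0(S,\CO_S(\beta))$ is irreducible and $h$ is proper, it will suffice to (i) exhibit a single irreducible component of dimension $\beta^2 + 1$ that dominates $B$, and (ii) rule out any further irreducible component lying over the discriminant locus $B \setminus U$.

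For (i), the identification (\ref{eqn2.3_1}) realises $h^{-1}(U)$ as the smooth relative Jacobian $\mathrm{Pic}^e(\CC_U/U)$ over the irreducible base $U$, with connected fibers of pure dimension $g_\beta$. It is therefore an open, irreducible subvariety of $M^L_{\beta,\chi}$ of dimension
\[
\dim U + g_\beta \;=\; \tfrac12\beta\cdot(\beta - K_S) \;+\; \tfrac12\beta\cdot(\beta + K_S) + 1 \;=\; \beta^2 + 1,
\]
where the value of $\dim B$ is extracted from Lemma \ref{lem2.1}. Its closure $W_0 := \overline{h^{-1}(U)} \subseteq M^L_{\beta,\chi}$ is then an irreducible component of the claimed dimension.

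For (ii), suppose $W$ is another irreducible component distinct from $W_0$. Since $W \cap W_0$ is a proper closed subset of $W_0$ and $h^{-1}(U)$ is open in $W_0$, the non-empty open subset $W \cap h^{-1}(U)$ of $W$, if it existed, would be dense in the irreducible $W$ and yet contained in the closed set $W \cap W_0$, forcing $W \subseteq W_0$ and hence $W = W_0$, a contradiction. Therefore $h(W) \subseteq B \setminus U$ and $\dim h(W) \leq \dim B - 1$. Invoking Proposition \ref{prop2.5} to bound every fiber of $h$ by $g_\beta$, one obtains
\[
\dim W \;\leq\; \dim h(W) \;+\; \max_{b \in h(W)} \dim h^{-1}(b) \;\leq\; (\dim B - 1) + g_\beta \;=\; \beta^2,
\]
which is strictly less than $\dim W_0$, contradicting the assumption that $W$ is an irreducible component of $M^L_{\beta,\chi}$. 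Hence $M^L_{\beta,\chi} = W_0$.

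The only genuinely non-formal ingredient in this outline is the pointwise fiber bound extracted from Proposition \ref{prop2.5}; everything else is general behaviour of proper morphisms over irreducible bases. This is precisely where the toric hypothesis on $S$ enters, since the fibers of $h$ over $B \setminus U$ parametrize semistable sheaves supported on reducible or non-reduced members of $|\beta|$, and forbidding these strata from carrying dimension in excess of the generic Jacobian dimension $g_\beta$ is the delicate step. Everything else in the proof is bookkeeping on proper morphisms to the irreducible base $B$.
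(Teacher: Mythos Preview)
Your overall strategy mirrors the paper's, but there is a genuine gap in step (ii). You conclude that a hypothetical extra component $W$ satisfies $\dim W \leq \beta^2 < \beta^2+1 = \dim W_0$ and declare this a contradiction. It is not: irreducible components of a possibly singular variety may well have different dimensions, and $M^L_{\beta,\chi}$ is singular whenever strictly semistable sheaves occur. Nothing you have written rules out a low-dimensional component sitting over $B\setminus U$.

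The paper avoids this by passing to the moduli \emph{stack} $\CM^L_{\beta,\chi}$, which by Lemma~\ref{lem2.5} is \emph{smooth} and therefore equidimensional of dimension $\beta^2$. Any extra component $\CM'$ of the stack then automatically has $\dim\CM' = \beta^2$, so the general fiber of $h_\CM|_{\CM'}$ over $B\setminus U$ has dimension at least $\beta^2-(\dim B-1) = g_\beta$, contradicting the bound $g_\beta-1$ of Proposition~\ref{prop2.5}. Irreducibility of $M^L_{\beta,\chi}$ then follows because $q\colon \CM^L_{\beta,\chi}\to M^L_{\beta,\chi}$ is surjective.

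A related slip: Proposition~\ref{prop2.5} bounds the fibers of the \emph{stacky} map $h_\CM$ by $g_\beta-1$, not the fibers of $h$ by $g_\beta$. Deducing the latter from the former is not automatic (the map $q$ has fibers of varying negative dimension over the strictly semistable locus), and in any case is unnecessary once you run the argument at the stack level.
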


The group scheme $\pi_P: P \to B$ acts naturally and fiberwise on the moduli space $M^L_{\beta,\chi}$ via tensor product
\[
\CL \cdot \CF = \CL \otimes \CF, \quad \quad \CL \in P_b=\pi_P^{-1}(b),~~\CF \in h^{-1}(b)
\]
with $b \in B$ a closed point.

\begin{prop}\label{prop2.4}
The triple $\left(M^L_{\beta, \chi}, P, B\right)$ with $h$ and $\pi_P$ above form a weak abelian fibration of relative dimension $g_\beta$.
\end{prop}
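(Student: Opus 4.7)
The plan is to verify, one by one, the three axioms of a weak abelian fibration given in Section~\ref{sec1.1}, for the triple $(M^L_{\beta,\chi}, P, B)$ with the tensor-product action, relative dimension $d = g_\beta$. Before addressing the axioms, I would check that the action is well-defined: for $\CL \in P_b$ of multidegree $0$ and $\CF$ a semistable sheaf supported on $C_b$, the sheaf $\CL \otimes \CF$ has the same Fitting support and Euler characteristic as $\CF$, and tensoring with a line bundle of multidegree $0$ does not change the Gieseker slope of any subsheaf. Hence the action descends to $S$-equivalence classes.

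For axiom (i), the morphism $\pi_P \colon P \to B$ has fibers $P_b = \mathrm{Pic}^0(C_b)$; each such Picard scheme is smooth and connected of dimension equal to the arithmetic genus $g_\beta$, so the fibers are pure of dimension $g_\beta$. Theorem~\ref{thm2.3} then supplies the matching pure dimension of $M^L_{\beta,\chi}$, namely $\dim M^L_{\beta,\chi} = \beta^2 + 1 = \dim B + g_\beta$. For axiom (iii), over the smooth locus $U \subset B$ the Picard scheme $P_U$ is an abelian scheme and its Tate module is $T_{\BQ_\ell}(P_U) = R^1\pi_* \BQ_\ell$, which carries the Weil/cup-product pairing; by the standard spreading-out argument for relative Picards of families of curves (as in \cite[Section 4]{Ngo}), this pairing extends \'etale locally to a bilinear form on $T_{\BQ_\ell}(P)$ whose restriction to the abelian quotient $T_{\BQ_\ell}(A_s)$ remains non-degenerate for every $s \in B$.

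The main obstacle is axiom (ii), affineness of stabilizers, since it must be checked uniformly across polystable sheaves on possibly reducible and non-reduced curves in $|\beta|$. My approach is as follows. Fix $b \in B$ with corresponding curve $C_b$, and let $\CF$ be a polystable representative of a point in $h^{-1}(b)$, decomposed as $\CF = \bigoplus_j \CF_j^{\oplus n_j}$ with pairwise non-isomorphic stable factors $\CF_j$. The stabilizer in $P_b$ consists of $\CL$ with $\CL \otimes \CF \simeq \CF$; since distinct $\CF_j$ are non-isomorphic, such an $\CL$ must satisfy $\CL \otimes \CF_j \simeq \CF_j$ for every $j$. Let $\nu \colon \widetilde C_b \to C_b$ be the normalization of (the underlying reduced subcurve of) $\mathrm{supp}(\CF_j)$ for each $j$. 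The normalization sequence for the Picard exhibits $\mathrm{Pic}^0(C_b)$ as an extension of $\mathrm{Pic}^0(\widetilde C_b)$ by an affine group (a product of a torus coming from nodes and a unipotent group from non-reduced components). Pulling back $\CL \otimes \CF_j \simeq \CF_j$ to $\widetilde C_b$ and comparing determinants component by component forces $\nu^* \CL$ to be trivial on each component of $\widetilde C_b$; that is, the stabilizer lies in the kernel of $P_b \to \mathrm{Pic}^0(\widetilde C_b)$, which is affine. This gives affine stabilizers at every polystable point and hence axiom (ii).

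Putting these three verifications together proves the proposition. The substantive content is concentrated in (i), where Theorem~\ref{thm2.3} does the heavy lifting, and in (ii), where the normalization/pullback argument must be carried out carefully for arbitrary polystable sheaves; (iii) is essentially formal once one invokes the standard construction of the Weil pairing for families of Picard schemes.
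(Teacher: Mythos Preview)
Your outline matches the paper's proof exactly: check axioms (i)--(iii), with (i) reduced to Theorem~\ref{thm2.3}, while for (ii) and (iii) the paper simply cites \cite[Lemma~3.5.4]{dCRS} and \cite[Theorem~3.3.1]{dC_SL} (via \cite[Lemma~3.5.5]{dCRS}) rather than writing out the arguments you sketch.

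Two small imprecisions in your treatment of (ii), neither fatal. First, from $\CL\otimes\CF\simeq\CF$ you can only conclude that $\CL$ \emph{permutes} the isomorphism classes of the stable summands $\CF_j$ (respecting multiplicities), not that it fixes each one; however, this permutation is locally constant and trivial at $\CL=\CO$, so on the identity component of the stabilizer you do get $\CL\otimes\CF_j\simeq\CF_j$, which is all you need for affineness. Second, the determinant comparison on a component $E$ of $\widetilde C_b$ only gives $(\nu^*\CL|_E)^{\otimes r_E}\simeq\CO_E$ where $r_E>0$ is the generic rank of $\nu^*\CF_j$ on $E$, so $\nu^*\CL$ is torsion rather than trivial; but torsion in the abelian variety $\mathrm{Pic}^0(\widetilde C_b)$ is finite, so the stabilizer still has finite image there and hence lies in an extension of a finite group by the affine kernel. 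With these adjustments your sketch is the standard argument underlying the cited lemma.
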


\begin{proof}
We need to check (i,ii,iii) of Section \ref{sec1.1}. Recall from Section \ref{Sec2.2} that the group scheme $\pi_P: P \to B$ is smooth whose fibers are of pure dimensions $g_\beta$. Hence the condition (i) follows from Theorem \ref{thm2.3}. The affineness of the stabilizers (Condition (ii)) is proven in \cite[Lemma 3.5.4]{dCRS}, and the polarizability of the Tate module (Condition (iii)) associated with the group scheme $P$ is given by \cite[Theorem 3.3.1]{dC_SL} as explained in \cite[Lemma 3.5.5]{dCRS}.
\end{proof}

\subsection{Moduli stacks}\label{Sec2.4}
For the polarized surface $(S,L)$, the moduli of semistable sheaves can be constructed as a GIT-quotient of the corresponding Quot-scheme (denoted by $\mathrm{Quot}$),
\[
M^L_{\beta, \chi} = \mathrm{Quot}^{\mathrm{ss}} \sslash \mathrm{GL}_m
\]
where the semistable part of the Quot scheme $\mathrm{Quot}^{\mathrm{ss}}$ and $m$ rely on the Hilbert polynomial $\mathrm{dim}H^0(S, \CF\otimes L^{\otimes n})$ of a semistable sheaf $\CF$ with $\mathrm{supp(\CF)}=\beta$ and $\chi(\CF)=\chi$. We also consider the moduli stack of semistable sheaves 
\[
\CM^L_{\beta,\chi} = [\mathrm{Quot}^{\mathrm{ss}} \slash \mathrm{GL}_m]
\]
such that the natural projection 
\[
q: \CM^L_{\beta,\chi} \to M^L_{\beta, \chi}
\]
induces a \emph{good moduli space} of the Artin stack $\CM^L_{\beta,\chi}$. 

\begin{lem}\label{lem2.5}
The stack $\CM^L_{\beta,\chi}$ is nonsingular of dimension
\[
\mathrm{dim}\CM^L_{\beta,\chi} = \beta^2.
\]
\end{lem}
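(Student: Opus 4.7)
The plan is to read off smoothness and dimension of $\CM^L_{\beta,\chi}$ at a semistable sheaf $\CF$ from its self-$\Ext$ groups on the surface $S$. The tangent complex of the stack at $[\CF]$ is $R\Hom(\CF,\CF)[1]$, so first-order deformations are classified by $\Ext^1(\CF,\CF)$, infinitesimal automorphisms by $\Hom(\CF,\CF)$, and obstructions lie in $\Ext^2(\CF,\CF)$. Consequently, it is enough to establish that $\Ext^2(\CF,\CF)=0$ for every semistable $\CF$; then smoothness at $[\CF]$ follows and the stacky dimension there is
\[
\dim\Ext^1(\CF,\CF)-\dim\Hom(\CF,\CF) \;=\; -\chi(\CF,\CF).
\]

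The main step is the $\Ext^2$-vanishing, and it is precisely here that the Fano hypothesis is indispensable. Serre duality on $S$ gives
\[
\Ext^2(\CF,\CF) \;\simeq\; \Hom(\CF,\CF\otimes K_S)^\vee.
\]
Since $-K_S$ is ample and $\beta$ is effective, $\beta\cdot K_S<0$, and the reduced slopes satisfy
\[
\mu(\CF\otimes K_S) \;=\; \mu(\CF)+\frac{\beta\cdot K_S}{L\cdot\beta} \;<\; \mu(\CF).
\]
Tensoring by a line bundle preserves purity and semistability, so both $\CF$ and $\CF\otimes K_S$ are pure $1$-dimensional $L$-semistable sheaves with strictly decreasing slopes; a non-zero map between such sheaves would give a sub-object of $\CF\otimes K_S$ of slope $\geq\mu(\CF)$, contradicting semistability of $\CF\otimes K_S$. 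Hence $\Hom(\CF,\CF\otimes K_S)=0$ and the desired vanishing follows.

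The dimension computation is then a direct Hirzebruch--Riemann--Roch calculation. For a pure $1$-dimensional sheaf with $\mathrm{ch}_0(\CF)=0$ and $\mathrm{ch}_1(\CF)=\beta$, the only contribution of $\mathrm{ch}(\CF)^\vee\cdot\mathrm{ch}(\CF)\cdot\mathrm{td}(S)$ in degree two is $-\mathrm{ch}_1\cdot\mathrm{ch}_1=-\beta^2$, so
\[
\chi(\CF,\CF) \;=\; \int_S \mathrm{ch}(\CF)^\vee\cdot\mathrm{ch}(\CF)\cdot\mathrm{td}(S) \;=\; -\beta^2,
\]
and combining with the vanishing above yields $\dim\CM^L_{\beta,\chi}=\beta^2$. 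The slope argument is the only genuine obstacle; everything else is formal. Observe that the toric assumption plays no role in this lemma---only the Fano condition enters---consistent with the remark in the introduction that toricity is used later only to verify Proposition \ref{prop2.5}.
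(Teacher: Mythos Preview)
Your proof is correct, and the key step---the vanishing of $\Ext^2(\CF,\CF)$---is handled by a more direct route than the paper's. The paper first reduces by semicontinuity of $\Hom(\CF,\CF\otimes\omega_S)$ to the polystable case, splits into stable summands $\CF_i$, and then for each pair $(\CF_i,\CF_j)$ of equal slope uses the short exact sequence $0\to\CF_j\otimes\omega_S\to\CF_j\to\CF_j|_E\to 0$ coming from a choice of anticanonical curve $E\in|-K_S|$ to deduce $\Hom(\CF_i,\CF_j\otimes\omega_S)=0$. You bypass this entirely: since $\beta\cdot(-K_S)>0$, tensoring by $K_S$ strictly lowers the slope, and as tensoring by a line bundle preserves semistability, no nonzero morphism can exist from a semistable sheaf to a semistable sheaf of strictly smaller slope. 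This is cleaner, avoids the reduction to stable pieces, and uses only the numerical positivity $\beta\cdot(-K_S)>0$, whereas the paper's argument leans on the effectiveness of $-K_S$ to produce the curve $E$. The Hirzebruch--Riemann--Roch dimension count $-\chi(\CF,\CF)=\beta^2$ is the same in both.
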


\begin{proof}
The obstruction space for a semistable sheaf $\CF \in \CM^L_{\beta,\chi}$ is
\begin{equation}\label{eqn_lem2.5}
\mathrm{Ext}_S^2(\CF, \CF) = \mathrm{Hom}_S(\CF, \CF\otimes \omega_S)^\vee, \quad \omega_S = \CO_S(K_S).
\end{equation}
We prove in the following that (\ref{eqn_lem2.5}) vanishes.

By the semicontinuity of $\mathrm{Hom}_S(\CF, \CF\otimes \omega_S)$, it suffices to show the vanishing
\[
\mathrm{Hom}_S(\CF, \CF \otimes \omega_S) =0
\]
when $\CF$ is a polystable sheaf on $S$. Hence we only need to prove the vanishing 
\begin{equation}\label{vanishing1}
\mathrm{Hom}_S(\CF_1, \CF_2\otimes \omega_S) = 0
\end{equation}
for two stable sheaves $\CF_1$ and $\CF_2$ on $S$ with the same slope
\[
\mu(\CF_1)= \mu(\CF_2).
\]
Since $-K_S$ is effective for the del Pezzo surface $S$, we have a short exact sequence
\[
0 \to \CF_2 \otimes \omega_S \to \CF_2 \to {\CF_2}|_E \to 0
\]
where $E$ is a curve in the linear system $|-K_S|$. The induced long exact sequence gives
\begin{equation}\label{LES}
0 \to \mathrm{Hom}_S(\CF_1, \CF_2\otimes \omega_S) \to \mathrm{Hom}_S(\CF_1, \CF_2) \to \mathrm{Hom}_S(\CF_1, \CF_2|_E). 
\end{equation}
When $\CF_1 \neq \CF_2$, by the stability we have $\mathrm{Hom}_S(\CF_1, \CF_2)=0$. When $\CF_1=\CF_2$, the second map of (\ref{LES}) is injective:
\[
\mathrm{Hom}_S(\CF_1, \CF_1)= \BC\cdot \mathrm{id} \hookrightarrow \mathrm{Hom}_S(\CF_1, \CF_1|_E).
\]
In particular (\ref{vanishing1}) vanishes in either case. This implies the vanishing of the obstuction (\ref{eqn_lem2.5}) and proves that $\CM^L_{\beta,\chi}$ is nonsingular. Consequently, we have
\[
\mathrm{dim}\CM^L_{\beta,\chi} = \mathrm{dim}\mathrm{Ext}_S^1(\CF, \CF) - \mathrm{dim}\mathrm{Hom}_S(\CF, \CF)= -\chi(\CF, \CF) = \beta^2. \qedhere
\]
\end{proof}

Combining with the Hilbert--Chow morphism $h: M^L_{\beta, \chi} \to B$, we obtain a morphism
\begin{equation}\label{stack_HC}
h_\CM: \CM^L_{\beta, \chi} \to B.
\end{equation}
%Similarly, for the case (B), we also have a morphism
%\begin{equation}\label{stack_Hitchin}
%    h_\CM: \widetilde{\CM}_{n, \chi} \to B
%\end{equation}
%from the moduli stack of semistable Higgs bundles to the Hitchin base.

\begin{prop}\label{prop2.5}
Let $S$ be a toric del Pezzo surface. For any closed point $b\in B$, we have the following dimension bound for the fiber of (\ref{stack_HC}): 
\begin{equation}\label{eqn_thm2.5}
\mathrm{dim} h_\CM^{-1}(b) \leq \frac{1}{2}\beta\cdot (\beta+K_S).
\end{equation}
\end{prop}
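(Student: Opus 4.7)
The bound $\tfrac{1}{2}\beta\cdot(\beta+K_S) = g_\beta - 1$ is precisely the dimension of a generic stacky fiber of $h_\CM$ over the smooth-curve locus $U \subset B$: over $U$ the fiber is a $\BG_m$-gerbe over an abelian variety of dimension $g_\beta$. The content of the proposition is therefore that the stacky Hilbert--Chow morphism is equidimensional from above, with no anomalous jumps even over reducible or non-reduced curves and even in strata of strictly semistable sheaves (where stabilizers enlarge).

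The plan is to exploit the toric structure of $S$ to reduce to a combinatorial check. The two-dimensional torus $T = (\BG_m)^2$ acts on $S$, hence on $B = \BP H^0(S,\CO_S(\beta))$ and on $\CM^L_{\beta,\chi}$ via pushforward, and the morphism $h_\CM$ is $T$-equivariant. Since the function $b \mapsto \dim h_\CM^{-1}(b)$ is upper semi-continuous and $T$-invariant, and every $T$-orbit closure in the projective base $B$ meets $B^T$, it suffices to verify the bound at $T$-fixed points. A $T$-fixed $b_0 \in B$ corresponds to a monomial section of $\CO_S(\beta)$, whose zero locus is a $T$-invariant effective divisor $C_{b_0} = \sum_i m_i D_i$, a nonnegative integer combination of toric boundary curves $D_i$.

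Having localized, I would then stratify the fiber $h_\CM^{-1}(b_0)$ by Jordan--H\"older type. Each semistable $\CF$ with $\mathrm{supp}(\CF) = C_{b_0}$ has associated graded $\bigoplus_\alpha \CF_\alpha^{\oplus n_\alpha}$ with each $\CF_\alpha$ stable of the common slope $\mu(\CF)$, $\mathrm{supp}(\CF_\alpha) = C_\alpha$, and $\sum n_\alpha C_\alpha = C_{b_0}$. For each JH type, the dimension of the corresponding stratum of the fiber decomposes as: the dimensions of the (stable) moduli of the factors $\CF_\alpha$ supported on the subcurves $C_\alpha$; plus the extension data $\mathrm{Ext}^1(\CF_\alpha,\CF_\beta)$ building the filtration; minus the dimensions of the enlarged stabilizers. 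For stable $\CF$ with integral support, the fiber bound is immediate from $\dim\CM^{L,s}_{\beta,\chi} = \beta^2$ (Lemma \ref{lem2.5}) and $\dim B = \tfrac{1}{2}\beta\cdot(\beta - K_S)$, so an induction on the length of the JH filtration reduces the general bound to a numerical inequality comparing extensions to arithmetic genera of subcurves.

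The main obstacle is the combined bookkeeping for non-reduced components ($m_i \geq 2$) and for strata with many coincident slopes, where $\mathrm{Ext}^1$'s and stabilizers both grow. The toric hypothesis enters precisely here: the normal bundles $\CN_{D_i/S} = \CO_{D_i}(D_i)$ admit explicit toric descriptions, and restrictions of sheaves to $T$-invariant subschemes can be described combinatorially in terms of lattice-point data of the polytope of $\beta$. The target numerical identity is that the total stratum-by-stratum dimension is bounded by the arithmetic genus of $C_{b_0}$ in $S$, which by adjunction on the del Pezzo surface equals $g_\beta$; subtracting one for the scalar gerbe then yields the bound $g_\beta - 1 = \tfrac{1}{2}\beta\cdot(\beta + K_S)$. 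The toric restriction in the statement is expected to be removable once the corresponding combinatorial count is replaced by an intrinsic argument on a general del Pezzo surface.
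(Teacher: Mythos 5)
Your first reduction is exactly the paper's: $h_\CM$ is $T$-equivariant for the dense torus $T\subset S$, the fiber dimension is upper semi-continuous, and every orbit closure in the projective base meets $B^T$, so it suffices to bound the fiber over a monomial section, whose zero locus is $\sum_i m_i D_i$ with $D_i$ toric boundary curves. From that point on, however, your plan has a genuine gap. You stratify by Jordan--H\"older type and induct on the length of the filtration, with base case ``stable $\CF$ with integral support.'' But at a $T$-fixed point of $B$ the support is essentially never integral, and stable sheaves can perfectly well have non-reduced or reducible support; so your induction has no usable base case for the components with $m_i\geq 2$, which is where all the difficulty lives. You flag this as ``the main obstacle'' and propose that toric combinatorics of normal bundles and lattice points will close it, but no mechanism is given, and the needed bound is not a combinatorial identity: it is a nontrivial geometric dimension estimate.

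The paper's route is different in two essential ways. First, it discards stability altogether and bounds the dimension of the stack $\CM_{E,\chi}$ of \emph{all} pure one-dimensional sheaves supported on $E=\sum_i n_iE_i$, inducting on the number of irreducible components of the support rather than on JH filtrations; the inductive step uses the canonical exact sequence $0\to\CF'\to\CF\to\CF''\to 0$ splitting off the components in $E''$, with fiber dimension controlled by $\dim\Ext^1(\CF'',\CF')=E'\cdot E''$ after checking $\Hom(\CF'',\CF')=\Ext^2_S(\CF'',\CF')=0$ (this vanishing is clean precisely because $\CF'$ and $\CF''$ have distinct supports, avoiding the stabilizer bookkeeping your same-slope JH strata would force). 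Second, and this is the key idea your proposal is missing: for the base case $E=nE'$ with $E'\simeq\BP^1$ a toric curve of self-intersection $d>-2$, the formal neighborhood of $E'$ depends only on $d$, so sheaves on the thickening $nE'$ are identified with pure sheaves on $\mathrm{Tot}(\CO_{\BP^1}(d))$ supported on $n$ times the zero section, i.e.\ with the nilpotent cone of $\CO(d)$-twisted Higgs bundles on $\BP^1$. The required bound is then the Chaudouard--Laumon dimension estimate for the stacky nilpotent cone (Proposition \ref{prop2.6}), valid since $d>2g-2=-2$. This is the actual content behind the multiplicities, and it is also exactly where the toric hypothesis is used (it guarantees the components are $\BP^1$'s with a standard formal neighborhood). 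Without this input, or a substitute for it, your stratification cannot be closed.
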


When $b$ represents an integral nonsingular curve, then $h_\CM^{-1}(b)$ is exactly a connected component of its Picard stack whose dimension $g_\beta-1$ matches the righthand side of (\ref{eqn_thm2.5}).

We prove Proposition \ref{prop2.5} in Section \ref{Sec2.5}. Then in Section \ref{Sec2.6} we use Proposition \ref{prop2.5} to complete the proof of Theorem \ref{thm2.3}.

\subsection{Proof of Proposition \ref{prop2.5}}\label{Sec2.5}
We reduce Proposition \ref{prop2.5} to a dimension bound for the nilpotent cone of Higgs bundles.

Let $C$ be a nonsingular curve of genus $g$, and let $D$ be a degree $d$ effective divisor on $C$ with 
\begin{equation}\label{d>2g-2}
d > 2g-2.
\end{equation}
We denote by $\CM^{\mathrm{nil}}_{n,\chi}$ the moduli stack of nilpotent Higgs bundles 
\[
(\CE, \theta): \quad \theta: \CE \to \CE \otimes \CO_C(D), \quad \mathrm{rank}(\CE)=n,\quad \chi (\CE) = \chi
\]
where we do not impose any (semi-)stability conditions.

The stack $\CM^{\mathrm{nil}}_{n,\chi}$ is essentially the central fiber of the (stacky) Hitchin fibration. Alternatively, by the spectral correspondence $\CM^{\mathrm{nil}}_{n,\chi}$ parameterizes pure 1-dimesional sheaves $\CF$ with
\[
\mathrm{supp}(\CF) = nC, \quad \chi(\CF) = \chi
\]
on the total space $\mathrm{Tot}(\CO_C(D))$ of the line bundle $\CO_C(D)$. Here the spectral correspondence is induced by the pushforward along the standard projection $\mathrm{Tot}(\CO_C(D)) \to C$.

\begin{prop}[\emph{c.f.}\cite{CL}]\label{prop2.6}
We have
\begin{equation}\label{eqn_thm2.6}
\mathrm{dim}\CM^{\mathrm{nil}}_{n,\chi} \leq n(g-1)+\frac{1}{2}n(n-1)d.
\end{equation}
\end{prop}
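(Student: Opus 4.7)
The plan is to bound $\dim \CM^{\mathrm{nil}}_{n,\chi}$ via a Springer-type resolution combined with a deformation-theoretic dimension count. First I would introduce the auxiliary stack $\widetilde{\CM}^{\mathrm{nil}}_{n,\chi}$ parameterizing triples $(\CE,\theta,\CE_\bullet)$ with $(\CE,\theta)\in\CM^{\mathrm{nil}}_{n,\chi}$ and $\CE_\bullet : 0=\CE_0 \subsetneq \CE_1 \subsetneq \cdots \subsetneq \CE_n = \CE$ a complete flag of subbundles satisfying the strict nilpotency condition $\theta(\CE_i) \subseteq \CE_{i-1}(D)$. The forgetful morphism $p:\widetilde{\CM}^{\mathrm{nil}}_{n,\chi}\to \CM^{\mathrm{nil}}_{n,\chi}$ is surjective, since for any nilpotent $(\CE,\theta)$ the saturated kernel filtration $\ker(\theta^i:\CE \to \CE(iD))$ is $\theta$-compatible and admits a refinement to a complete $\theta$-compatible flag. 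Because $p$ is surjective, $\dim \CM^{\mathrm{nil}}_{n,\chi} \leq \dim \widetilde{\CM}^{\mathrm{nil}}_{n,\chi}$.

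Next I would stratify $\widetilde{\CM}^{\mathrm{nil}}_{n,\chi}$ by the tuple of degrees $\mathbf{a}=(a_1,\dots,a_n)$ of the associated graded line bundles $L_i = \CE_i/\CE_{i-1}$, subject to $\sum a_i = \chi+n(g-1)$, and fiber the stratum over $\mathrm{Bun}_T \cong \prod_i \Pic^{a_i}(C)$. The fiber over $(L_i)_i$ decomposes into the extension data assembling the flag (controlled by $\mathrm{Ext}^1(L_i,\CE_{i-1})$ terms, with subquotients $L_i^{-1}L_j$ for $i>j$) and the compatible Higgs field $\theta \in H^0(N_D)$, where $N_D \subset \mathrm{End}(\CE)(D)$ is the strictly upper-triangular subbundle whose graded pieces are $L_j L_i^{-1}(D)$ for $i>j$. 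The expected dimension of the stratum is then $-\chi$ of the deformation complex $[\,\mathrm{Upper}(\mathrm{End}(\CE)) \xrightarrow{[\theta,-]} N_D\,]$; a direct Riemann--Roch computation shows that the degree contributions from the $a_i$ cancel precisely between the two terms, yielding the uniform value
\[
-\chi(C^\bullet) \;=\; n(g-1) + \tfrac{1}{2}n(n-1)d.
\]

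The main obstacle is upgrading this expected dimension to an honest upper bound, because on strata where the degree gaps $a_i - a_j$ (with $i>j$) are large, one sees positive $H^1$ contributions both in the extension data and in $H^0(N_D)$ that do not obviously cancel. To resolve this I would proceed by induction on $n$: extract the line sub-Higgs bundle $L_1 = \CE_1$, which satisfies $\theta(L_1)=0$, and the quotient $(\CE',\theta',\CE'_\bullet) \in \widetilde{\CM}^{\mathrm{nil}}_{n-1,\chi'}$, to which the inductive bound applies. The remaining data consists of $L_1 \in \Pic(C)$, the extension class in $\mathrm{Ext}^1(\CE',L_1)$, and the lift of $\theta'$ to a $\theta: \CE\to\CE(D)$ with $\theta(L_1)=0$, which is parameterized by $\mathrm{Hom}(\CE',L_1(D))$. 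Applying Riemann--Roch to the pair $\bigl(\mathrm{Ext}^1(\CE',L_1),\ \mathrm{Hom}(\CE',L_1(D))\bigr)$ and using the hypothesis $\deg(D)>2g-2$ to obtain the vanishing $H^1(\CE'^\vee \otimes L_1(D))=0$ for line-bundle subquotients of sufficiently high degree, the difference of dimensions is controlled by $\chi$-terms, and the total contribution beyond $(n-1)(g-1)+\tfrac12(n-1)(n-2)d$ is exactly $(g-1) + (n-1)d$. Summing gives the inductive step
\[
(n-1)(g-1) + \tfrac12(n-1)(n-2)d + (g-1) + (n-1)d \;=\; n(g-1) + \tfrac12 n(n-1)d,
\]
and the base case $n=1$ (where $\theta=0$ automatically and $\CM^{\mathrm{nil}}_{1,\chi}=\Pic^\chi(C)$ has stack dimension $g-1$) is immediate. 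The hypothesis $\deg(D)>2g-2$ enters precisely to force the relevant $H^1$'s to vanish in the inductive comparison, which is the only place where the inequality is tight.
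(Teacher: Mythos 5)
Your reduction to the flagged stack fails at the first step, and the cohomology vanishing in your inductive step is false on exactly the strata you identify as problematic. (For reference: the paper does not reprove this bound at all — it quotes the computation of Chaudouard--Laumon \cite{CL}, Section 10, with Schiffmann's method \cite{Sch} as an alternative.) The stack $\widetilde{\CM}^{\mathrm{nil}}_{n,\chi}$ of nilpotent Higgs bundles equipped with an \emph{arbitrary} complete $\theta$-compatible flag has unbounded dimension, so the inequality $\dim \CM^{\mathrm{nil}}_{n,\chi}\leq\dim\widetilde{\CM}^{\mathrm{nil}}_{n,\chi}$ is vacuous. Concretely, for $n=2$ the stratum with graded degrees $(a_1,a_2)$ has dimension $2(g-1)+\bigl(h^1(L_1L_2^{-1})-h^0(L_1L_2^{-1})\bigr)+h^0(L_1L_2^{-1}(D))=3(g-1)+(a_2-a_1)+h^0(L_1L_2^{-1}(D))$, which tends to infinity as $a_2-a_1\to+\infty$; these strata all map into the $\theta=0$ locus, where every line subbundle of every degree yields a valid flag. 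The same phenomenon kills the inductive step: the error term $h^1(\CE'^{\vee}\otimes L_1(D))$ that you need to vanish is a sum of $h^1(L_i^{-1}L_1(D))$ over $i\geq 2$, and $h^1(L_i^{-1}L_1(D))=0$ requires $a_1-a_i+d>2g-2$; when $a_i>a_1+d-(2g-2)$ this fails and the error term is unbounded. The hypothesis $\deg(D)>2g-2$ cannot rescue the argument, because nothing in your construction bounds $a_i-a_1$ from above.

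The missing idea is that the filtration must be canonical and its numerical invariants must be controlled by $\theta$ itself. In the arguments of Laumon, Ginzburg, Schiffmann, and Chaudouard--Laumon, one stratifies by the ranks and degrees of the kernel (or image) filtration $\ker\theta^{\bullet}$, and the decisive input is that $\theta$ induces \emph{injections of sheaves} $\ker\theta^{i+1}/\ker\theta^{i}\hookrightarrow(\ker\theta^{i}/\ker\theta^{i-1})(D)$ between consecutive graded pieces. These injections force inequalities on the ranks and degrees of the graded pieces, and it is precisely these inequalities that bound the strata uniformly and make the Riemann--Roch bookkeeping close up to $n(g-1)+\frac12 n(n-1)d$. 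Your proof never uses injectivity of the induced maps $L_i\to L_{i-1}(D)$ — in your setup they can all be zero, e.g.\ when $\theta=0$ — which is why the degrees of your graded pieces are unconstrained. If you want a self-contained argument, replace the arbitrary complete flag by the kernel filtration, record its type (ranks and degrees), derive the degree inequalities coming from the injections, and only then count; this is essentially the content of the computation cited from \cite{CL}.
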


\begin{proof}
The dimension formula for the stack of the nilpotent cone and the comparison to the righthand side of (\ref{eqn_thm2.6}) are given in Line 2 and Line 6 at \cite[Page 725, Section 10]{CL}. Although it is assumed in the beginning of \cite{CL} that the curve $C$ has genus $g\geq 2$, the dimension calculation of \cite[Section 10]{CL} does not require this constraint as long as (\ref{d>2g-2}) holds.\footnote{An alternative proof of this dimension bound can be obtained using the method of \cite[Proposition 3.1]{Sch}. We note that the last equation of \cite[Section 10]{CL} shows that (\ref{eqn_thm2.6}) still holds if $d = 2g-2$.} 
\end{proof}

Now we prove Proposition \ref{prop2.5}.

We consider the maximal open torus $\BT \subset S$ whose action on $S$ induces $\BT$-actions on both the moduli stack $\CM^L_{\beta,\chi}$ and the base $B$. By a semi-continuity argument (\emph{c.f.} \cite[Proof of Corollary 1]{Ginzburg}), it suffices to show (\ref{eqn_thm2.5}) for all $T$-fixed points $b\in B$. Since we only concern dimension counts, we prove the following stronger statement for toric divisors without imposing (semi-)stability conditions.
\medskip

{\bf Claim:} For an effective divisor
\[
E= \sum_i n_i E_i, \quad n_i>0
\]
with each $E_i$ a nonsingular irreducible toric divisor, we have
\begin{equation}\label{eqn35}
\mathrm{dim}\CM_{E, \chi} \leq \frac{1}{2}E \cdot (E + K_S).
\end{equation}
Here $\CM_{E, \chi}$ stands for the moduli stack of pure 1-dimensional sheaves supported on $E \subset S$.
\medskip

We prove (\ref{eqn35}) by induction on the number of the irreducible components $\{E_i\}$.

For the induction base, we consider $E = nE'$ with $E'$ irreducible. Then $E' \simeq \BP^1$ and the normal bundle $\CO_{E'}(d)$ of $E'$ in $S$ satisfies
\begin{equation}\label{eqn36}
d= {E'}^2 = -2+E'\cdot(-K_S) >-2.
\end{equation}
Since the formal neighborhood of an irreducible toric divisor only depends on the degree of the normal bundle, the thickened curve $E =nE'\subset S$ is isomorphic to the $n$-th thickening
\[
nE' \subset \mathrm{Tot}(\CO_{E'}(d))
\]
of the 0-section in the total space of $\CO_{E'}(d)$. Hence by Proposition \ref{prop2.6} (where the condition (\ref{d>2g-2}) is guaranteed by (\ref{eqn36})), we have
\[
\mathrm{dim}\CM_{nE',\chi} \leq -n+\frac{1}{2}n(n-1)d = nE'\cdot (nE' +K_S).
\]
Here we used $E'^2=d$ and $E'\cdot K_S = -d-2$ in the last identity. This proves the induction base.

To complete the induction, we assume that $E = E' +E''$. Here
\[
E' = \sum_i n_iE'_i, \quad E''=\sum_im_iE''_i
\]
with $E'_i, E''_j$ irreducible toric divosrs satisfying $E'_i \neq E''_j$ for any $i,j$.

\begin{lem}\label{lem2.7}
Let $\CF$ be a pure 1-dimensional sheaf supported on $E$. Then there exists a canonical short exact sequence
\[
0 \to \CF' \to \CF \to \CF'' \to 0
\]
where $\CF'$ and $\CF''$ are pure 1-dimensional sheaves supported on $E'$ and $E''$ respectively.
\end{lem}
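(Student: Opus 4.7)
The plan is to exhibit $\CF'$ as the maximal subsheaf of $\CF$ on which the defining section of $E'$ vanishes. Let $s_{E'} \in H^0(S, \CO_S(E'))$ and $s_{E''} \in H^0(S, \CO_S(E''))$ be the canonical sections cutting out the respective divisors, each well-defined up to a nonzero scalar. I will set
\[
\CF' := \ker\!\bigl( \CF \xrightarrow{\,\cdot\, s_{E'}\,} \CF \otimes \CO_S(E') \bigr), \qquad \CF'' := \CF/\CF',
\]
so that the desired short exact sequence $0 \to \CF' \to \CF \to \CF'' \to 0$ arises tautologically.

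The first block of checks concerns $\CF'$. By construction $s_{E'}$ annihilates $\CF'$, making it an $\CO_{E'}$-module and hence supported on $E'$. Purity is inherited from $\CF$: any coherent subsheaf of a pure one-dimensional sheaf is itself pure one-dimensional (or zero), since a zero-dimensional subsheaf of $\CF'$ would sit inside $\CF$ and contradict its purity.

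The second block concerns $\CF''$. For its support, I invoke the hypothesis that $\CF$ is an $\CO_E$-module, i.e.\ that multiplication by $s_E = s_{E'}\cdot s_{E''}$ annihilates $\CF$. Factoring $\cdot\, s_E$ as $\CF \xrightarrow{\cdot s_{E'}} \CF \otimes \CO_S(E') \xrightarrow{\cdot s_{E''}} \CF \otimes \CO_S(E)$ and using that $\CF''$ is naturally identified with the image of the first arrow, the vanishing of the composition forces $s_{E''}$ to annihilate $\CF''$, so that $\CF''$ is an $\CO_{E''}$-module.

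The step I expect to require the most care, and the reason behind the particular choice above, is the purity of $\CF''$. The naive alternative $\CF':= s_{E''}\cdot \CF$ does produce a subsheaf supported on $E'$, but the corresponding quotient can acquire zero-dimensional torsion at points of $E' \cap E''$, as one already sees from $\CF = \CO_{E'} \oplus \CO_{E''}$. With the definition above, however, $\CF''$ embeds canonically into $\CF \otimes \CO_S(E')$ via multiplication by $s_{E'}$; this line-bundle twist of a pure one-dimensional sheaf is again pure one-dimensional, and the subsheaf principle used for $\CF'$ now yields purity of $\CF''$ for free. Finally, canonicity is immediate: $s_{E'}$ is determined by $E'$ up to a nonzero scalar, and the kernel of multiplication by $s_{E'}$ is invariant under such rescaling, so the short exact sequence depends only on $\CF$ and the decomposition $E = E' + E''$.
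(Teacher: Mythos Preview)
Your proof is correct, and in fact your short exact sequence coincides with the paper's, though the two are presented dually. The paper defines $\CF''$ first, as $(\CF|_{E''})$ modulo its maximal zero-dimensional subsheaf, and then sets $\CF' := \ker(\CF \twoheadrightarrow \CF'')$; you instead define $\CF'$ first as $\ker(\cdot\, s_{E'})$ and take $\CF''$ as the quotient. One checks these agree: your $\CF'$, being an $\CO_{E'}$-module, has zero-dimensional image in $\CF|_{E''}$ and hence maps to zero in the paper's $\CF''$, while conversely any pure quotient of $\CF$ supported on $E''$ is a quotient of the paper's $\CF''$, giving the reverse inclusion. The trade-offs are symmetric: in the paper's version, purity of $\CF''$ and its support on $E''$ are immediate by construction, but the assertion that $\CF'$ is scheme-theoretically supported on $E'$ is left implicit; in your version, the support of $\CF'$ on $E'$ and of $\CF''$ on $E''$ are transparent from the annihilation by $s_{E'}$ and $s_{E''}$, and your observation that $\CF'' \hookrightarrow \CF(E')$ gives a clean one-line proof of purity of $\CF''$ that avoids quotienting by torsion altogether.
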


\begin{proof}
We take
\[
\CF'' : = \left( \CF|_{E''} \right)\slash \textup{maximal zero-dim subsheaf of } \CF|_{E''} \in \mathrm{Coh}(E'')
\]
and 
\[
\CF' := \mathrm{Ker}\left( \CF \twoheadrightarrow \CF|_{E''} \twoheadrightarrow \CF''\right) \in \mathrm{Coh}(E'). 
\]
Since $\CF'$ is a subsheaf of $\CF$, it is pure, supported on$E'$.
\end{proof}

Two sheaves $\CF'$ and $\CF''$ (as in Lemma \ref{lem2.7}) with different supports satisfy that
\[
\mathrm{Hom}_S(\CF'', \CF') = \mathrm{Hom}_{E'}(i^*\CF'', \CF') =0
\]
where $i: E' \hookrightarrow S$ is the embedding and $i^*\CF''$ is 0-dimensional on $E'$. Serre duality further implies
\[
\mathrm{Ext}^2_S(\CF'', \CF') = \mathrm{Hom}_S(\CF', \CF''\otimes \omega_S) =0.
\]
Hence by Lemma \ref{lem2.7}, after decomposing the stack $\CM_{E,\chi}$ into strata, we obtain a morphism to
\[
\bigsqcup_{\chi'+\chi''=\chi} \CM_{E',\chi'} \times \CM_{E'',\chi''}
\]
whose closed fiber over $(\CF', \CF'')$ has dimension upper bound 
\[
\mathrm{dim}\mathrm{Ext}^1(\CF'', \CF') = \chi(\CF'', \CF') = E' \cdot E''.
\]
Combining with the induction assumption on the dimensions of $ \CM_{E',\chi'}$ and $\CM_{E'',\chi''}$, we conclude that
\begin{align*}
    \mathrm{dim}\CM_{E,\chi} & \leq \frac{1}{2}E'\cdot(E'+K_S) +\frac{1}{2}E''\cdot(E''+K_S) + E'\cdot E''\\
    & = \frac{1}{2}E\cdot(E+K_S).
\end{align*}
This completes the induction. \qed.

\subsection{Proof of Theorem \ref{thm2.3}}\label{Sec2.6}

We first prove the irreducibility of $M^L_{\beta,\chi}$. Equivalently, we prove the irreducibility of the stack $\CM^L_{\beta,\chi}$. 

Recall the open subset $U \subset B$ formed by nonsingular curves in the linear system $|\beta|$. The open substack $h_\CM^{-1}(U)$ parameterizes line bundles on these curves with Euler characteristic $\chi$. In particular, $h_{\CM}^{-1}(U)$ is Zariski open and dense in an irreducible component of the relative Picard stack associated with the universal curve $\pi: \CC \to U$. Assume $\CM^L_{\beta,\chi}$ has another irreducible component $\CM'$ which does not contain $h_\CM^{-1}(U)$. By Lemma \ref{lem2.5} it has dimension 
\[
\mathrm{dim}\CM' = \beta^2,
\]
and it maps to the complement $B\setminus U$ under the morphism (\ref{stack_HC}). This implies that a general fiber of 
\[
h_{\CM}\big{|}_{\CM'} : \CM' \to B\setminus U
\]
has dimension at least 
\[
 \mathrm{dim}\CM' -(\mathrm{dim}B-1) \geq \beta^2-\frac{1}{2}\beta\cdot(\beta-K_S)+1 >\frac{1}{2}\beta\cdot (\beta+K_S),
\]
which contradicts Proposition \ref{prop2.5}. This completes the proof of the irreducibility of $M^L_{\beta,\chi}$. \qed

%By Lemma \ref{lem2.5} $\CM^L_{\beta,\chi}$ is nonsingular. Therefore it suffices to show that 

\subsection{Higgs bundles}
Most of the statements for $M^{L}_{\beta,\chi}$ discussed above hold identically for the moduli spaces $\widetilde{M}_{n,\chi}$ of Higgs bundles in the case of (B). This is due to the fact that $\widetilde{M}_{n,\chi}$ can be viewed as the moduli space of 1-dimensional semistable sheaves $\CF$ on $\mathrm{Tot}(\CO_C(D))$ with
\[
[\mathrm{supp}(\CF)]= n[C], \quad \chi(\CF) = \chi 
\]
via the spectral correspondence. We summarize these results in the following for the reader's convenience.

Recall the universal spectral curve 
\[
\pi: \CC_B \to B
\]
with $\pi_P: P =\mathrm{Pic}^0({\CC_B/B}) \to B$ the relative degree 0 Picard variety. Similar to the case of $M^L_{\beta,n}$, the group scheme $P$ acts on $\widetilde{M}_{n,\chi}$ via tensor product
\[
\CL \cdot \CF = \CL \otimes \CF, \quad \quad \CL\in \pi_P^{-1}(b),\quad \CF\in h^{-1}(b), \quad \forall b\in B.
\]
Here we view a Higgs bundle as a pure 1-dimensional coherent sheaf supported on the spectral curve
\[
\pi^{-1}(b) \subset \mathrm{Tot}(\CO_C(D)).
\]
The moduli stack of semistable Higgs bundles admits a morphism
\[
q: \widetilde{\CM}_{n,\chi} \to \widetilde{M}_{n,\chi}
\]
which induces
\[
h_\CM = h \circ q: \widetilde{\CM}_{n,\chi} \to B.
\]

\begin{prop}\label{prop2.9} Assume $\mathrm{deg}(D)=d>2g-2$.The following statements hold.
\begin{enumerate}
    \item[(1)] The fiber of $h_\CM$ over a closed point $b\in B$ satisfies
    \[
    \mathrm{dim}h_\CM^{-1}(b) \leq  n(g-1)+\frac{1}{2}n(n-1)d.
    \]
    \item[(2)] The stack $\widetilde{\CM}_{n,\chi}$ is irreducible and nonsingular of dimension 
    \[
    \mathrm{dim}\widetilde{\CM}_{n,\chi} =  n^2d.
    \]
    \item[(3)] The moduli space $\widetilde{M}_{n,\chi}$ is irreducible of dimension
    \[
    \mathrm{dim}\widetilde{M}_{n,\chi} =  n^2d+1.
    \]
    \item[(4)] The triple $(\widetilde{M}_{n,\chi}, P, B)$ form a weak abelian fibration of relative dimension
    \[
    g_n=  n(g-1)+\frac{1}{2}n(n-1)d+1.
    \]
\end{enumerate}
\end{prop}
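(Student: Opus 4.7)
The plan is to translate every claim in Proposition \ref{prop2.9} into a statement about pure 1-dimensional sheaves on the quasi-projective surface $X = \mathrm{Tot}(\CO_C(D))$ via the spectral correspondence, and then run arguments mirroring Sections 2.3--2.6. The role played by the Fano hypothesis on the del Pezzo surface $S$ is played here by $\mathrm{deg}(D) > 2g - 2$: this guarantees that $K_X = \pi^*(K_C - D)$ has negative degree along the projection $\pi : X \to C$, which is exactly what is needed for the Serre-dual obstruction groups to vanish.

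For (1), I would combine Proposition \ref{prop2.6} with a $\BG_m$-rescaling argument. The action $t \cdot (\CE, \theta) = (\CE, t\theta)$ on $\widetilde{\CM}_{n,\chi}$ is equivariant with respect to the $\BG_m$-action on $B = \bigoplus_{i=1}^n H^0(C, \CO(iD))$ of positive weights $(1, 2, \ldots, n)$, so every point of $B$ specializes to $0 \in B$ as $t \to 0$. Since $h_\CM^{-1}(b)$ and $h_\CM^{-1}(t \cdot b)$ are isomorphic for any $t \in \BG_m$, upper semi-continuity of fibre dimension for the finite-type morphism $h_\CM$ yields
\[
\dim h_\CM^{-1}(b) \leq \dim h_\CM^{-1}(0).
\]
The central fibre is an open substack of the nilpotent cone $\CM^{\mathrm{nil}}_{n,\chi}$, so Proposition \ref{prop2.6} gives (1).

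For (2), smoothness of $\widetilde{\CM}_{n,\chi}$ reduces to the vanishing of $\mathrm{Ext}^2_X(\CF, \CF)$ for polystable compactly supported 1-dimensional sheaves $\CF$ on $X$; by Serre duality this is dual to $\mathrm{Hom}_X(\CF, \CF \otimes K_X)$, and under the spectral correspondence such morphisms become Higgs morphisms into $(\CE, \theta) \otimes (K_C - D)$. Since $\mathrm{deg}(K_C - D) < 0$ the target has strictly smaller Higgs slope than the source, and stability of the constituents forces the vanishing, in direct parallel with Lemma \ref{lem2.5}. Computing $-\chi(\CF, \CF)$ by Riemann-Roch then gives $\dim \widetilde{\CM}_{n,\chi} = n^2 d$. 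Irreducibility is proved exactly as in Theorem \ref{thm2.3}: the open substack $h_\CM^{-1}(U)$ is irreducible of the expected dimension, being an open dense subset of a connected component of the relative Picard stack of the universal smooth spectral curve, and any putative other component would map into $B \setminus U$ and force some fibre of $h_\CM$ to exceed the bound of (1). Statement (3) then follows by adding one for the generic $\BG_m$-stabilizer (scalar automorphisms) under the good moduli space map $q$.

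For (4), the three weak abelian fibration axioms are verified as follows. Axiom (i) combines (a) a Riemann--Hurwitz (or adjunction) computation giving arithmetic genus $g_n = n(g-1) + \tfrac{1}{2} n(n-1) d + 1$ for a smooth spectral curve, which is the pure fibre dimension of $\pi_P : P \to B$, with (b) the direct identity $\dim B + g_n = \bigl(\tfrac{1}{2} d n(n+1) - n(g-1)\bigr) + g_n = n^2 d + 1 = \dim \widetilde{M}_{n,\chi}$. Axioms (ii) and (iii) are formal properties of the relative (compactified) Jacobian of a family of (possibly reducible) curves and follow from \cite[Lemma 3.5.4]{dCRS} and \cite[Lemma 3.5.5]{dCRS} respectively, exactly as in Proposition \ref{prop2.4}. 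The only delicate point in the whole argument is (1): the $\BG_m$-rescaling reduction to the nilpotent cone is elementary but uses essentially that all of $B$ contracts to the origin under positive-weight $\BG_m$, a feature that has no counterpart in the del Pezzo setting, where the toric hypothesis and inductive argument of Proposition \ref{prop2.5} were needed instead.
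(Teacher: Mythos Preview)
Your proposal is correct and follows essentially the same route as the paper's very terse proof: (1) via semicontinuity under the contracting $\BG_m$-action reducing to Proposition~\ref{prop2.6}, (2) via the Serre-dual obstruction vanishing (the paper cites \cite[Corollary 2.6]{MSch} for this, but your explicit slope argument using $\deg(K_C-D)<0$ is exactly what underlies that citation), (3) via the irreducibility argument of Section~\ref{Sec2.6}, and (4) by the same references to \cite{dCRS} and \cite{dC_SL} as in Proposition~\ref{prop2.4}. Your write-up is in fact more detailed than the paper's, which simply points to the parallel statements in the del Pezzo case.
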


\begin{proof}
These statements are parallel to Proposition \ref{prop2.6}, Lemma \ref{lem2.5}, Theorem \ref{thm2.3}, and Propsotion \ref{prop2.4}. (1) follows from a semicontinuity argument and Proposition \ref{prop2.6}. (2) follows from Serre duality for semistable Higgs bundles \cite[Corollary 2.6]{MSch}. (3) follows from (1,2) as explained in Section \ref{Sec2.6}. (4) is deduced by an identical proof as for Proposition \ref{prop2.4}.
\end{proof}

\subsection{Assumptions on the curve class $\beta$}
In Section 2, the ampleness assumption of the curve class $\beta$ is used for the following properties:
\begin{enumerate}
    \item[(1)] The linear system $|\beta|$ is base point free, and
    \item[(2)] a general curve in $|\beta|$ is integral and nonsingular.
\end{enumerate}
We may replace the ampleness assumption for $\beta$ by the conditions (1) and (2) above. 

\begin{prop}\label{prop2.10}
Theorem \ref{thm2.3} and Proposition \ref{prop2.4} hold for any curve class $\beta$ which contains an integral curve in the linear system $|\beta|$.
\end{prop}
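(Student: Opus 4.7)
The plan is to re-examine the proofs of Theorem \ref{thm2.3} and Proposition \ref{prop2.4}, identifying the three places where ampleness of $\beta$ was used, and verifying that each of these consequences follows already from the existence of a single integral member in $|\beta|$.

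Ampleness entered Section \ref{Sec2.2} in three ways: (a) to make $\beta$ effective, so that Lemma \ref{lem2.1} yields $\mathrm{dim}\, B = \frac{1}{2}\beta\cdot(\beta - K_S)$; (b) to produce via Bertini a non-empty open dense $U \subset B$ of smooth integral curves, together with a smooth universal curve $\pi: \CC \to U$ of genus $g_\beta = \frac{1}{2}\beta\cdot(\beta + K_S) + 1$; and (c) indirectly, through the hypotheses of Proposition \ref{prop2.5}. Item (a) is immediate, since any integral curve is effective. For (b), integrality is an open condition in the flat family $\CC_B \to B$, so the locus $U^{\mathrm{int}} \subset B$ of integral members is open, non-empty by hypothesis, and dense by the irreducibility of the projective space $B$; a Bertini-type argument on the smooth surface $S$ then yields a non-empty open subset $U \subset U^{\mathrm{int}}$ of smooth integral curves. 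For (c), the proof of Proposition \ref{prop2.5} reduces via semicontinuity to $\BT$-fixed points of $B$ corresponding to effective toric divisors, and so uses only the effectivity of $\beta$; the bound $\mathrm{dim}\, h_\CM^{-1}(b) \leq \frac{1}{2}\beta\cdot(\beta + K_S)$ therefore applies verbatim.

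With these three ingredients in place, the arguments of Sections \ref{Sec2.4}--\ref{Sec2.6} go through without modification. The irreducibility proof in Section \ref{Sec2.6} still compares the dense open substack $h_\CM^{-1}(U)$, which lies in a single component of the relative Picard stack of $\pi: \CC \to U$, against any putative additional component of $\CM^L_{\beta,\chi}$ of dimension $\beta^2$ mapping to $B \setminus U$, and derives a contradiction from the fiber dimension bound of Proposition \ref{prop2.5}. The weak-abelian-fibration axioms of Proposition \ref{prop2.4} then follow as before: axiom (i) from Theorem \ref{thm2.3} together with the construction of $P$, while axioms (ii) and (iii) concern only the group scheme $P$ and do not depend on ampleness of $\beta$.

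The principal obstacle is step (b): passing from ``$|\beta|$ contains an integral curve'' to ``$|\beta|$ contains a smooth integral curve''. On a smooth projective surface, the singular locus of a general member of a linear system is concentrated on the base locus, so a general integral member is smooth away from a finite set of base points; for the toric del Pezzo surfaces considered here, one can also verify directly that the existence of a single integral member in $|\beta|$ forces the general member to be smooth, which suffices to apply the arguments above.
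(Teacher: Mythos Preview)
Your overall structure is correct: ampleness enters only through the dimension formula for $B$, the existence of a dense open of smooth integral curves in $|\beta|$, and the hypotheses of Proposition \ref{prop2.5}, and you rightly note that the first and third of these need only effectivity. You also correctly flag step (b) as the crux.

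The gap is exactly where you locate it. Your final paragraph asserts that for toric del Pezzo surfaces ``one can also verify directly'' that an integral member of $|\beta|$ forces the general member to be smooth, but no verification is supplied; the preceding remark about singularities concentrating on the base locus gives smoothness only \emph{away} from base points, and you have not shown that $|\beta|$ is base-point-free. As written this is an assertion, not an argument.

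The paper closes this gap with a short dichotomy via adjunction. Let $C_0 \in |\beta|$ be integral. Since $-K_S$ is ample, adjunction forces either $C_0 \simeq \BP^1$ to be an exceptional $(-1)$-curve or $C_0^2 > 0$. In the first case $|\beta| = \{C_0\}$ and the moduli space $M^L_{\beta,\chi}$ is a reduced point, so the statement is trivial. In the second case $\beta$ is nef, and the paper invokes \cite[Corollary 4.7]{DR} to conclude that $|\beta|$ is base-point-free on the del Pezzo surface; Bertini then yields the smooth integral general member. Your proposal is missing both the separate treatment of the exceptional-curve case and, more importantly, the key external input (nef $\Rightarrow$ base-point-free on del Pezzo surfaces) that makes Bertini applicable.
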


\begin{proof}
Assume $C_0 \in |\beta|$ is integral. By the adjunction formula, either $C_0 \simeq \BP^1$ is an exceptional divisor or $C_0^2 >0$. In the first case, the moduli space is a reduced point. In the second case, we obtain that the divisor $C$ is integral and nef. Therefore (1,2) follow from \cite[Corollary 4.7]{DR} and the Bertini theorem.
\end{proof}

\section{Intersection cohomology complexes}\label{Sec3}

We prove in this section a support inequality for the moduli spaces $M^L_{\beta,\chi}$ and $\widetilde{M}_{n,\chi}$.

\begin{thm}\label{thm3.1}
Let $h: M \to B$ be the morphism (\ref{0.2_1}) or (\ref{0.2_2}). We define the $\delta$-function on $B$ from the associated group scheme $P$ as in Section \ref{Section_Supp}. Then any support $Z$ of $Rh_*\mathrm{IC}_M$ satisfies
\[
\mathrm{codim}Z \leq \delta_Z.
\]
\end{thm}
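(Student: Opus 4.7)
The strategy is to apply Theorem~\ref{thm1.8} to the weak abelian fibration $(M, P, B)$ established in Proposition~\ref{prop2.4} (in case (A)) or Proposition~\ref{prop2.9}(4) (in case (B)). Since the axioms (i,ii,iii) of a weak abelian fibration are already in hand, the only remaining input needed from Theorem~\ref{thm1.8} is the relative dimension bound
\[
\tau_{>2d}\left(Rh_*\mathrm{IC}_M[-\dim M]\right) = 0,
\]
where $d = g_\beta$ in case (A) and $d = g_n$ in case (B). By proper base change, this is equivalent to the pointwise vanishing
\[
\mathbb{H}^{j}\!\left(h^{-1}(b),\; \mathrm{IC}_M|_{h^{-1}(b)}\right) = 0 \quad \text{for every } b \in B \text{ and every } j > 2d - \dim M,
\]
and this is the inequality I would focus on proving.

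The main idea is to compare $Rh_{*}\mathrm{IC}_M$ to the pushforward from the smooth moduli \emph{stack}. By Lemma~\ref{lem2.5} and Proposition~\ref{prop2.9}(2), the stack $\mathcal{M}$ (either $\mathcal{M}^L_{\beta,\chi}$ or $\widetilde{\mathcal{M}}_{n,\chi}$) is smooth, so $\mathrm{IC}_{\mathcal{M}} = \mathbb{Q}_\mathcal{M}[\dim \mathcal{M}]$. Let $q : \mathcal{M} \to M$ be the good moduli map and set $h_\mathcal{M} = h \circ q$. The plan is to invoke a splitting theorem for $q$, asserting that $\mathrm{IC}_M$ appears (after an appropriate shift) as a direct summand of $Rq_*\mathrm{IC}_{\mathcal{M}}$. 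Applying $Rh_*$ exhibits $Rh_*\mathrm{IC}_M$ as a direct summand of $R(h_\mathcal{M})_*\mathbb{Q}_\mathcal{M}[\dim\mathcal{M}]$, reducing a cohomological bound on $\mathrm{IC}_M$ to a \emph{stacky} cohomological bound for $h_\mathcal{M}$.

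This stacky bound is exactly what has been prepared in Section~\ref{Sec2}: Proposition~\ref{prop2.5} (using the toric hypothesis in case (A)) and Proposition~\ref{prop2.9}(1) (in case (B)) give
\[
\dim h_\mathcal{M}^{-1}(b) \leq d - 1 \quad \text{for every closed } b \in B.
\]
Together with proper base change for the stacky morphism $h_\mathcal{M}$, this forces $R^i(h_\mathcal{M})_*\mathbb{Q}_\mathcal{M} = 0$ for $i > 2(d-1)$. Shifting by $[\dim\mathcal{M}]$ and remembering $\dim M - \dim\mathcal{M} = 1$, this cohomological bound matches precisely the condition $\tau_{>2d}(Rh_*\mathrm{IC}_M[-\dim M]) = 0$ required by Theorem~\ref{thm1.8}.

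The main obstacle is the splitting theorem for the good moduli map $q: \mathcal{M}\to M$. Because $\mathcal{M}$ has generic $\mathbb{G}_m$-stabilizers, the complex $Rq_*\mathbb{Q}_\mathcal{M}$ is unbounded above and the decomposition theorem of~\cite{BBD} does not immediately apply, so $\mathrm{IC}_M$ cannot be extracted as a perverse direct summand by a naive appeal to semisimplicity. Overcoming this will require working with framed moduli and unbounded complexes, as signalled in the paper's outline; once the splitting is in place, Theorem~\ref{thm3.1} follows by combining it with the dimension estimates of Propositions~\ref{prop2.5} and \ref{prop2.9}(1) and the generalized Ng\^o support theorem, Theorem~\ref{thm1.8}.
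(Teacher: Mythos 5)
Your overall architecture coincides with the paper's: reduce Theorem \ref{thm3.1} to the relative dimension bound of Theorem \ref{thm1.8}, establish that bound by splitting $\mathrm{IC}_M$ off of the pushforward from the smooth stack $\CM$, and feed in the stacky fiber-dimension estimates of Proposition \ref{prop2.5} and Proposition \ref{prop2.9}(1). You also correctly flag the splitting along $q:\CM\to M$ as the hard step. However, there is a genuine error in the middle of your argument: you claim that the fiber bound $\dim h_\CM^{-1}(b)\le d-1$ forces $R^i(h_\CM)_*\BQ_{\CM}=0$ for $i>2(d-1)$ ``by proper base change.'' This is false. The morphism $h_\CM$ is not proper (it is not even representable), and its fibers are quotient stacks with positive-dimensional affine stabilizers, whose \emph{ordinary} cohomology is unbounded above: already over a point $b\in U$ the fiber is a $\BG_m$-gerbe over an abelian variety, so $H^*(h_\CM^{-1}(b),\BQ)$ contains $H^*(B\BG_m,\BQ)=\BQ[t]$, $\deg t=2$, and is nonzero in arbitrarily high even degrees. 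The complex $R(h_\CM)_*\BQ_{\CM}$ therefore has cohomology sheaves in all sufficiently large even degrees, and no truncation vanishing of the kind you assert can hold.

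The correct statement, and the one the paper uses (Proposition \ref{prop3.3} via Lemma \ref{Lem3.5}), is for \emph{compactly supported} cohomology: $H^n_c(\CY,\BQ_l)=0$ for $n>2\dim\CY$ holds for Artin stacks even of negative dimension, so the fiber bound gives $\tau_{>2R-2}\bigl(Rh_{\CM!}\BQ_l\bigr)=0$. To make this compatible with your splitting, you must convert the $*$-splitting $Rq_*\BQ_l\simeq \mathrm{IC}_M[-\dim M]\oplus\CE$ of Proposition \ref{prop3.4} into a $!$-statement: applying Verdier duality and using that $\CM$ is nonsingular of dimension $\dim M-1$, one gets $Rq_!\BQ_l\simeq \mathrm{IC}_M[-\dim M+2]\oplus\CE'$, and then $Rh_!\circ Rq_!=Rh_{\CM!}$ together with $Rh_!=Rh_*$ (properness of $h$ on the scheme level) exhibits $Rh_*\mathrm{IC}_M[-\dim M+2]$ as a summand of $Rh_{\CM!}\BQ_l$; the degree shift by $2$ exactly reconciles the bound $2R-2$ for the stack with the bound $2R$ required for $\mathrm{IC}_M$. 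Without this duality step your degree bookkeeping only appears to work because you applied a vanishing to the wrong functor.
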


By Theorem \ref{thm1.8}, Proposition \ref{prop2.4}, and Proposition \ref{prop2.9} (4), it suffices to prove the following propositon concerning the intersection cohomology complex.

\begin{prop}\label{prop3.2}
We have
\begin{equation}\label{eqn3.2}
\tau_{>2R}\left( Rh_* \mathrm{IC}_M[-\mathrm{dim}M]\right) =0,\quad R:= \mathrm{dim}M-\mathrm{dim}B.
\end{equation}
\end{prop}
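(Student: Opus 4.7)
The strategy is to reduce to the smooth moduli stack $\mathcal{M}$, where Proposition~\ref{prop2.5} (respectively Proposition~\ref{prop2.9}(1)) supplies a sharp fiber dimension bound, and then to transfer the resulting bound back to $M$ via a splitting theorem for the good moduli space morphism $q \colon \mathcal{M} \to M$.

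First I would pass to fibers. Since $h$ is proper, the vanishing $\tau_{>2R}(Rh_*\mathrm{IC}_M[-\dim M]) = 0$ is equivalent, by proper base change, to the stalk vanishing $\mathbb{H}^i\bigl(h^{-1}(b), \mathrm{IC}_M[-\dim M]|_{h^{-1}(b)}\bigr) = 0$ for every closed point $b \in B$ and every $i > 2R$. On the stack side, $\mathcal{M}$ is smooth of dimension $\dim M - 1$ by Lemma~\ref{lem2.5} (case A) or Proposition~\ref{prop2.9}(2) (case B), so $\mathbb{Q}_\mathcal{M}$ equals $\mathrm{IC}_\mathcal{M}[-\dim \mathcal{M}]$. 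Proposition~\ref{prop2.5} (resp.\ Proposition~\ref{prop2.9}(1)) gives $\dim h_\mathcal{M}^{-1}(b) \leq R - 1$, which, after accounting for the generic one-dimensional scalar stabilizer, yields $\dim h^{-1}(b) \leq R$ and, more importantly, constrains the dimension of the strictly semistable (hence potentially singular) locus of $M$ inside each fiber.

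The technical heart of the argument is a splitting theorem realizing $\mathrm{IC}_M[-\dim M]$ as a direct summand of a complex pushed forward from $\mathcal{M}$. The naive candidate $Rq_*\mathbb{Q}_\mathcal{M}$ is unbounded above, because $q$ is a $B\mathbb{G}_m$-gerbe on the stable locus and $H^*(B\mathbb{G}_m) = \mathbb{Q}[u]$ lives in all non-negative even degrees. To circumvent this, I would either pass to a moduli of framed objects $M^{\mathrm{fr}}$ (a smooth scheme carrying a $\mathrm{GL}_N$-action with $M = M^{\mathrm{fr}} \sslash \mathrm{GL}_N$) and use the decomposition theorem for the proper piece of the framed-to-unframed morphism, or work directly with unbounded complexes on $\mathcal{M}$ while carefully extracting the bounded summand containing $\mathrm{IC}_M[-\dim M]$. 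This corresponds precisely to the combination of techniques from algebraic stacks, framed moduli, and unbounded complexes advertised in the introduction.

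Once the splitting is in place, I would conclude by combining the hypercohomology spectral sequence for $\mathrm{IC}_M|_{h^{-1}(b)}$ with the standard IC support condition $\dim \mathrm{supp}\,\mathcal{H}^{k-\dim M}(\mathrm{IC}_M) < \dim M - k$ for $k > 0$ and with the stacky bound $\dim(h^{-1}(b) \cap M^{\mathrm{sing}}) \leq R - 1$ inherited from Proposition~\ref{prop2.5}. The main obstacle will be the splitting step: producing the direct summand decomposition at the level of complexes while simultaneously tracking cohomological amplitude through the $B\mathbb{G}_m$-gerbe structure is the key technical difficulty, and is where the machinery developed in Section~\ref{Sec3} must do its work.
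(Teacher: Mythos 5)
You have correctly identified the two main inputs: the splitting of $\mathrm{IC}_M[-\dim M]$ off of $Rq_*\mathbb{Q}_{\mathcal{M}}$ (obtained exactly as you suggest, via the framed moduli spaces $M_f \hookrightarrow \mathcal{X}_f$ of Meinhardt, excision, the decomposition theorem for the proper map $M_f \to M$, and a homotopy colimit over perverse truncations — this is the paper's Proposition \ref{prop3.4}), and the stacky fiber bound $\dim h_{\mathcal{M}}^{-1}(b) \leq R-1$ from Proposition \ref{prop2.5} / Proposition \ref{prop2.9}(1). The gap is in your final step: the proposed combination of the hypercohomology spectral sequence with the IC support conditions and a bound on $h^{-1}(b)\cap M^{\mathrm{sing}}$ does not close. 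Concretely, for $q\geq 1$ the sheaf $\mathcal{H}^{q}(\mathrm{IC}_M[-\dim M])$ is supported on the singular locus, so even granting $\dim\bigl(h^{-1}(b)\cap M^{\mathrm{sing}}\bigr)\leq R-1$ (which itself does not follow formally from the stacky bound, since $q$ is far from a gerbe over the strictly semistable locus), you only get $E_2^{p,q}=0$ for $p>2R-2$; hence $E_2^{p,q}$ can be nonzero with $p+q$ up to $2R-2+q$, which exceeds $2R$ as soon as $q\geq 3$. The global support condition $\dim\mathrm{supp}\,\mathcal{H}^{q}<\dim M-q$ does not help, because that support may lie entirely inside a single fiber of $h$, and $\dim M - q -1 = R+\dim B -q-1$ is far larger than what you need. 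To make your route work you would need a genuinely fiberwise, $q$-graded support estimate for the cohomology sheaves of $\mathrm{IC}_M$, which is not among your stated ingredients.

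The paper avoids this entirely by a duality trick. Applying $\mathbb{D}$ to the splitting $Rq_*\mathbb{Q}\simeq \mathrm{IC}_M[-\dim M]\oplus\mathcal{E}$ and using smoothness of $\mathcal{M}$ (so $\mathbb{D}\mathbb{Q}=\mathbb{Q}[2\dim\mathcal{M}]=\mathbb{Q}[2\dim M-2]$) together with self-duality of $\mathrm{IC}_M$ shows that $\mathrm{IC}_M[-\dim M+2]$ is a direct summand of $Rq_!\mathbb{Q}$. Pushing forward by the proper map $h$ (so $Rh_!=Rh_*$), the complex $Rh_*\mathrm{IC}_M[-\dim M+2]$ becomes a summand of $Rh_{\mathcal{M}!}\mathbb{Q}$, whose stalk at $b$ is $H_c^*(\mathcal{M}_b,\mathbb{Q})$; this vanishes above degree $2\dim\mathcal{M}_b\leq 2R-2$ by the elementary Lemma \ref{Lem3.5}. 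Thus the entire relative dimension bound reduces to compactly supported cohomology of the stacky fibers, with no information needed about the singular locus of $M$ or the higher cohomology sheaves of $\mathrm{IC}_M$. You should replace your last paragraph with this dualization step; everything before it is essentially the paper's argument.
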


\subsection{Sketch of the proof of Proposition \ref{prop3.2}}

Altough Proposition \ref{prop3.2} only concerns bounded complexes on schemes, our proof relies on \emph{unbounded complexes} on \emph{Artin stacks}. From now on, we work with the derived category $D_c(-, \BQ_l)$ of constructible sheaves with $\BQ_l$-coefficients for Artin stacks as in \cite{LO1, LO2}. We denote by $D_c^b(-, \BQ_l)$,  $D^-(-, \BQ_l)$, and $D^+(-, \BQ_l)$ the subcategories of complexes which are bounded, bounded from above, and bounded from below respectively. In this section, we assume that all Artin stacks are of finite type. We use the six operations for Artin stacks following \cite{LO1, LO2, LO3}.  Furthermore, by \cite{LO3}, we also have the perverse $t$-structure in the unbounded setting.

Recall the morphsim from the moduli stack to the moduli space of 1-dimensional semistable sheaves/Higgs bundles 
\[
q: \CM \to M.
\]
The composition of $q$ and $h: M \to B$ induces a morphism
\[
h_\CM = h \circ q: \CM \to B.
\]
We consider the (unbounded) complexes 
\[
Rh_{\CM!} \BQ_{l} \in D^-(B, \BQ_l), \quad Rq_* \BQ_{l} \in D^+(M, \BQ_l).
\]

We first prove Proposition \ref{prop3.2} assuming the following two propositions which concern the stack $\CM$.

\begin{prop}\label{prop3.3}
We have
\begin{equation}\label{eqn3.3}
\tau_{>2R-2}\left(Rh_{\CM!} \BQ_{l}\right) =0.
\end{equation}
\end{prop}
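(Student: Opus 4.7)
The plan is to argue stalkwise via base change, reducing \eqref{eqn3.3} to a cohomological dimension estimate on each stacky fiber. For any geometric point $b \in B$ with stacky fiber $\CM_{\bar b} := h_\CM^{-1}(\bar b)$, base change for the exceptional pushforward (which holds unconditionally in the Laszlo--Olsson six-functor formalism for finite type Artin stacks) identifies the stalk as
\[
(Rh_{\CM!}\BQ_l)_{\bar b} \;\cong\; R\Gamma_c(\CM_{\bar b}, \BQ_l),
\]
so that $\CH^i(Rh_{\CM!}\BQ_l)_{\bar b} \cong H^i_c(\CM_{\bar b}, \BQ_l)$. Consequently, it suffices to show that this compactly supported cohomology vanishes in all degrees above $2R - 2$, uniformly in $b$.

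The required vanishing will be deduced by combining two inputs. First, by Proposition~\ref{prop2.5} in case (A) and Proposition~\ref{prop2.9}(1) in case (B), every stacky fiber of $h_\CM$ is a finite type Artin stack of dimension at most $R - 1$. Second, we invoke the standard cohomological dimension estimate for finite type Artin stacks with affine stabilizers, namely $H^i_c(\CX, \BQ_l) = 0$ for $i > 2 \dim \CX$. Since $\CM_{\bar b}$ does have affine stabilizers (its automorphism groups sit inside finite-dimensional endomorphism algebras of semistable sheaves/Higgs bundles), both hypotheses are met, and we conclude $H^i_c(\CM_{\bar b}, \BQ_l) = 0$ for all $i > 2R - 2$. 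Passing this stalk vanishing back through the base change identification, the constructible (unbounded-below) complex $Rh_{\CM!}\BQ_l$ has no cohomology above degree $2R - 2$, which is precisely \eqref{eqn3.3}.

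The main subtle point is verifying that the Laszlo--Olsson machinery applies cleanly: one must work in the unbounded-from-below derived category $D^-(B, \BQ_l)$ (since $Rq_!\BQ_l$ on $M$ is typically unbounded below on account of the reductive stabilizers of $\CM$), and one needs base change for $Rh_{\CM!}$ despite $h_\CM$ being neither representable nor proper, as well as the dimension bound for compactly supported cohomology in the Artin setting. All of these are standard consequences of the fact that $\CM$ is the stack quotient of a Quot scheme by a reductive group action, but the proof should cite them explicitly.
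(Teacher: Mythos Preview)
Your proposal is correct and follows essentially the same route as the paper: identify stalks of $Rh_{\CM!}\BQ_l$ via base change with $H^*_c(\CM_b,\BQ_l)$, feed in the fiber dimension bound $\dim\CM_b\le R-1$ from Proposition~\ref{prop2.5} / Proposition~\ref{prop2.9}(1), and use the cohomological dimension estimate $H^i_c(\CY,\BQ_l)=0$ for $i>2\dim\CY$. One small remark: the paper's Lemma~\ref{Lem3.5} establishes this last vanishing for any finite-type Artin stack by stratifying into nonsingular pieces and applying Verdier duality plus the trivial vanishing of $H^{<0}$, so the affine-stabilizer hypothesis you invoke is not actually needed (though of course it holds here).
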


\begin{prop}\label{prop3.4}
There exists a splitting
\begin{equation}\label{eqn38}
Rq_* \BQ_{l} \simeq \mathrm{IC}_{M}[-\mathrm{dim}M] \oplus 
\mathcal{E} \in D^+(M, \BQ_l).
\end{equation}
\end{prop}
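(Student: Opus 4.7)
The starting point is the smoothness of the moduli stack $\CM$ established in Lemma~\ref{lem2.5} and Proposition~\ref{prop2.9}(2), which gives $\BQ_l \simeq \mathrm{IC}_\CM[-\dim \CM]$. Hence (\ref{eqn38}) amounts to realizing $\mathrm{IC}_M[-\dim M]$ as a direct summand of $Rq_*\mathrm{IC}_\CM$, i.e.\ of the pushforward of the IC-sheaf of $\CM$ along the good moduli space morphism.

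The first step would be to isolate the contribution of scalar automorphisms via rigidification. Each semistable object carries a central $\BG_m\subset\mathrm{Aut}$, producing a $B\BG_m$-gerbe $p\colon\CM\to\CM^{\mathrm{rig}}$ together with an induced good moduli space morphism $q^{\mathrm{rig}}\colon\CM^{\mathrm{rig}}\to M$ that is an isomorphism over the stable locus $M^s\subset M$. Since the pushforward of the constant sheaf along a $B\BG_m$-gerbe is untwisted, $Rp_*\BQ_l\simeq\bigoplus_{k\geq 0}\BQ_l[-2k]$, and the projection formula gives
\[
Rq_*\BQ_l\;\simeq\; Rq^{\mathrm{rig}}_*\BQ_l\;\otimes_{\BQ_l}\;\bigoplus_{k\geq 0}\BQ_l[-2k].
\]
Thus it suffices to split $\mathrm{IC}_M[-\dim M]$ off $Rq^{\mathrm{rig}}_*\BQ_l$; every other summand is absorbed into $\CE$ (and accounts for the unboundedness of $\CE$ from above).

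The heart of the proof is the analysis of $q^{\mathrm{rig}}$ via Alper's étale local structure theorem for good moduli spaces. Near any closed point $x\in M$ corresponding to a polystable representative with reductive stabilizer $H_x$, $q^{\mathrm{rig}}$ is étale-locally modeled on $[\widetilde U/H_x]\to \widetilde U\sslash H_x$, where $\widetilde U$ is a smooth affine $H_x$-scheme. On the open stratum where $H_x$ acts with trivial stabilizer this morphism is an isomorphism, giving the splitting tautologically; on the deeper strata the splitting would be constructed either via Kirwan's partial desingularization, or via equivariant intersection cohomology exploiting the rationality of the singularities of $\widetilde U\sslash H_x$ (a classical fact for GIT quotients of smooth affine varieties by reductive groups).

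The main obstacle is precisely this last step: producing a canonical local splitting over strata with nontrivial stabilizer so that the splittings patch under étale base change. Characterizing them uniquely as the intermediate extension of $\BQ_l$ from the smooth locus of $M$ makes the gluing automatic. A secondary technical point is that $Rq^{\mathrm{rig}}_*\BQ_l$ is unbounded above, requiring the Laszlo--Olsson six-functor formalism for unbounded complexes on Artin stacks, but this affects only $\CE$ and not the IC summand.
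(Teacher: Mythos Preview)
Your rigidification step is fine as a preliminary reduction, but the core of the argument has a genuine gap: the morphism $q^{\mathrm{rig}}\colon\CM^{\mathrm{rig}}\to M$ is \emph{not proper}, so the decomposition theorem of \cite{BBD} does not apply to it, and none of the substitutes you propose actually produces the splitting. Kirwan's partial desingularization yields a proper map from a smooth scheme to $M$, but it does not identify that pushforward with $Rq^{\mathrm{rig}}_*\BQ_l$ (there is no factorization through $\CM^{\mathrm{rig}}$ that does this). Rational singularities of GIT quotients (Boutot) is a statement about $\CO$-modules and is irrelevant to $\BQ_l$-constructible sheaves. Finally, the intermediate extension characterization pins down the \emph{object} $\mathrm{IC}_M$, not a splitting \emph{morphism} $Rq^{\mathrm{rig}}_*\BQ_l\to\mathrm{IC}_M[-\dim M]$; the local splittings you would build over the charts $[\widetilde U/H_x]\to\widetilde U\sslash H_x$ are non-canonical, and you have given no mechanism for gluing non-canonical choices under \'etale base change.

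The paper bypasses the non-properness problem by an entirely different, global construction. Following Meinhardt \cite{Mein}, for any $N>0$ one produces a nonsingular \emph{scheme} $M_f$ (moduli of framed objects) sitting as an open in an affine-space bundle $\CX_f\to\CM$, with complement of codimension $>N$, and such that the composite $q\circ p_M\colon M_f\to M$ is \emph{projective}. The excision triangle plus the codimension bound gives
\[
{}^{\mathfrak p}\tau_{<2N}\,Rq_*\BQ_l \;\simeq\; {}^{\mathfrak p}\tau_{<2N}\,R(q\circ p_M)_*\BQ_l.
\]
Now $q\circ p_M$ is proper and surjective from a smooth scheme, so the genuine decomposition theorem applies and $\mathrm{IC}_M$ splits off the lowest perverse cohomology. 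The resulting splittings are compatible as $N$ grows (Ext-groups between perverse sheaves stabilize in bounded degree), and taking a homotopy colimit over the perverse truncations yields the splitting of the unbounded complex $Rq_*\BQ_l$.
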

%\tau_{>-\mathrm{dim}M}(Rq_* \BQ_{l}) 
\begin{proof}[Proof of Proposition \ref{prop3.2}]
Applying the dualizing functor to the isomorphism (\ref{eqn38}), we obtain
\[
\BD \left(Rq_* \BQ_{l}\right) \simeq \mathrm{IC}_M[\mathrm{dim}M] \oplus \mathcal{E}', \quad \mathcal{E}' \in D^-(M, \BQ_l).
\]
Since $\CM$ is nonsingular, the lefthand side is isomorphic to
\[
Rq_! \BD(\BQ_{l})=Rq_!\BQ_{l}[2\mathrm{dim} \CM]= Rq_!\BQ_{l}[2\mathrm{dim}M-2].
\]
Combining the two equations above, we conclude that
\[
Rq_! \BQ_{l} \simeq \mathrm{IC}_M[-\mathrm{dim}M+2] \oplus \cdots \in D^-(M, \BQ_l).
\]
Hence, thanks to properness of $h: M \to B$, we have $Rh_!= Rh_*$, and the lefthand side of (\ref{eqn3.2}) (shifted by degree 2) is a direct sum component of the lefthand side of (\ref{eqn3.3}). In particular, Proposition \ref{prop3.2} follows from Proposition \ref{prop3.3}.
\end{proof}

In the rest of Section 3, we prove Propositions \ref{prop3.3} and \ref{prop3.4}.

\subsection{Proof of Proposition \ref{prop3.3}.}

Proposition \ref{prop3.3} is a consequence of the following well-known vanishing and the dimension bounds (Proposition \ref{prop2.5} for (A) and Proposition \ref{prop2.9} (i) for (B)) obtained in Section 2.

\begin{lem}\label{Lem3.5}
Let $\CY$ be an irreducible Artin stack of dimension $r$. Then for $n > 2r$ we have the following vanishing for compactly supported cohomology:
\begin{equation}\label{eqnnn44}
H_c^n(\CY, \BQ_l) = 0.
\end{equation}
\end{lem}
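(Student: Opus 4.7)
The plan is to reduce the vanishing to a bound on the dualizing complex via the Verdier duality formalism for Artin stacks of Laszlo--Olsson \cite{LO1, LO2, LO3}, which is already in use in this section. With $p_\CY : \CY \to \Spec \BC$ the structure morphism and $\omega_\CY := p_\CY^!\BQ_l$ the dualizing complex, duality provides the isomorphism
\[
R\Gamma_c(\CY, \BQ_l)^{\vee} \cong R\Gamma(\CY, \omega_\CY)
\]
in the derived category of $\BQ_l$-vector spaces. Taking cohomology converts the desired vanishing $H_c^n(\CY, \BQ_l) = 0$ for $n > 2r$ into the amplitude bound $\omega_\CY \in D^{\geq -2r}(\CY, \BQ_l)$.

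To establish this amplitude bound I would pick a smooth atlas $f : X \to \CY$ with $X$ a scheme and $f$ smooth of some relative dimension $g \geq 0$, so that $\dim X = r + g$. The scheme-theoretic analogue $\omega_X \in D^{\geq -2(r+g)}$ is classical: stratify $X$ into smooth locally closed strata $X_i$ of pure dimension $d_i \leq r+g$, for which $\omega_{X_i} = \BQ_l[2d_i](d_i)$ sits in degree $-2d_i \geq -2(r+g)$, and iterate the long exact sequence attached to an open-closed decomposition. Smooth pullback gives $f^! = f^*(g)[2g]$, and hence
\[
f^* \omega_\CY \;=\; \omega_X(-g)[-2g] \;\in\; D^{\geq -2r}.
\]
Since $f$ is a smooth surjection, $f^*$ is faithful and $t$-exact up to the shift $[g]$, so the amplitude bound descends from $f^*\omega_\CY$ to $\omega_\CY$ itself, yielding $\omega_\CY \in D^{\geq -2r}$.

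The main obstacle is purely foundational: verifying that the Laszlo--Olsson machinery produces Verdier duality and the $f^!$-formalism for the unbounded constructible complexes used in this section. All of these ingredients are in place in \cite{LO1, LO2, LO3}, so no further technical input is needed beyond assembling them. An alternative, more elementary approach would be to stratify $\CY$ by locally closed quotient substacks $[X_i/G_i]$ and argue by noetherian induction directly on compactly supported cohomology, treating the base case of classifying stacks $BG$ by a direct computation; this is longer but avoids invoking the full six-functor formalism.
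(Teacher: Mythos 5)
Your argument is correct, but it is organized differently from the paper's. The paper first treats the case where $\CY$ is nonsingular by applying Verdier duality directly to cohomology, $H_c^n(\CY,\BQ_l)^\vee = H^{2r-n}(\CY,\BQ_l)=0$ for $2r-n<0$, and then handles the general case by stratifying the stack $\CY$ itself into locally closed nonsingular substacks and running the excision sequences for $H_c^*$. You instead dualize once at the global level, reducing everything to the amplitude bound $\omega_\CY \in D^{\geq -2r}$, and you prove that bound by passing to a smooth atlas $f: X \to \CY$, invoking the classical bound $\omega_X \in D^{\geq -2\dim X}$ for schemes, and descending via $f^! = f^*(g)[2g]$ and faithfulness of smooth pullback. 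What your route buys is that the stratification/excision work happens entirely at the level of schemes, where the amplitude bound for dualizing complexes is standard; the cost is that you lean harder on the Laszlo--Olsson six-functor formalism ($f^!$ for smooth morphisms, duality for unbounded complexes on stacks), whereas the paper only needs Verdier duality for smooth stacks plus excision for $H_c^*$. One small imprecision: for the \emph{standard} $t$-structure, smooth pullback $f^*$ is $t$-exact with no shift (the shift by $[g]$ is a perverse-$t$-structure phenomenon), since $\CH^i(f^*K) = f^*\CH^i(K)$; this only makes your descent step easier, so nothing breaks. Your closing ``alternative'' via stratification of $\CY$ is essentially the paper's actual proof, except that the paper stratifies into smooth locally closed substacks (where duality applies directly) rather than into quotient substacks with a $BG$ base case.
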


\begin{proof}
In the special case when $\CY$ is nonsingular, (\ref{eqnnn44}) follows from the Verdier duality
\[
H_c^n(\CY, \BQ_l)^\vee = H^{2r-n}(\CY, \BQ_l) = 0, \quad 2r-n <0. 
\]
In general, since we only concern the constructible sheaf $\BQ_l$, we may assume that $\CY$ is reduced. Then by stratifying $\CY$ into locally closed nonsingular substacks and the excision sequences (\cite[Example 2.1 (iv)]{LO3}), we reduce (\ref{eqnnn44}) for general $\CY$ to the nonsingular ones.
\end{proof}

Let $b \in B$ be a closed point. We denote by $\CM_b$ the substack
\[
\CM_b : = h_\CM^{-1}(b) \subset \CM.
\]
Proposition \ref{prop2.5} and Proposition \ref{prop2.9} (i) yield
\[
\mathrm{dim}\CM_b \leq R-1, \quad \quad R= \mathrm{dim}M -\mathrm{dim}B.
\]
Combining with Lemma \ref{Lem3.5}, we conclude that the complex
\[
\left( Rh_{\CM!} \BQ_l \right)_b = H_c^*(\CM_b, \BQ_l)
\]
is concentrated in degrees $\leq 2(R-1)$ for any closed point $b \in B$. In particular, we have
\[
\Big{(}\tau_{>2R-2}(Rh_{\CM!} \BQ_l)\Big{)}_b =  \tau_{>2R-2} \Big{(} ( Rh_{\CM!} \BQ_l )_b \Big{)} =0, \quad \forall b\in B.
\]
This completes the proof of Proposition \ref{prop3.3}. \qed

\subsection{Moduli of framed objects}
The main difficulty for proving Proposition \ref{prop3.4} is the non-properness of the morphism $q: \CM \to M$. In order to apply the decomposition theorem \cite{BBD} to $q$, we use the moduli of framed objects \cite{Mein} to ``approximate" the stack $\CM$. We refer to \cite{D1, D3, D2, DM1, MR, MR1, MM} for applications of such techniques in the study of quivers representations and Donaldson--Thomas theory.

Let $M$ and $\CM$ be the moduli space and the moduli stack of (A) or (B) in Section \ref{Sec0.1}. Since in either case $M$ can be realized as a moduli space of semistable sheaves on an algebraic surface, we obtain (see Section \ref{Sec2.4}) that $M$ can be realized as a GIT-quotient of a Quot-scheme
\[
M = \mathrm{Quot}^{ss}\sslash \mathrm{GL}_m
\]
where the semistable locus $\mathrm{Quot}^{ss} \subset \mathrm{Quot}$ is with respect to a $\mathrm{GL}_m$-linearized polarization $\CL_m$ on $\mathrm{Quot}$. The morphism $q$ is induced by the morpshim from the stack to the corresponding good GIT-quotient: 
\begin{equation}\label{3.3_1}
 \CM = [\mathrm{Quot}^{ss}/ \mathrm{GL}_m] \xrightarrow{q}   \mathrm{Quot}^{ss}\sslash \mathrm{GL}_m = M.
\end{equation}
%As a consequence of Lemma \ref{lem2.5} and Proposition \ref{prop2.9}, the variety $\mathrm{Quot}^{ss}$ is also nonsingular.

\begin{prop}\label{prop3.6}
For any $N >0$, there exist a nonsingular scheme $M_f$ and a nonsingular Artin stack $\CX_f$ with a commutative diagram
   \begin{equation}\label{eqn46}
    \begin{tikzcd}[column sep=small]
    M_f \arrow[dr, "p_M"] \arrow[rr, hook, "j"] & & \CX_f \arrow[dl, "p_\CX"] \\
       & \CM  & 
\end{tikzcd}
\end{equation}
satisfying the following properties:
\begin{enumerate}
     \item[(a)] $p_\CX$ is an affine space bundle,
     \item[(b)] $j: M_f \hookrightarrow \CX_f$ is an open immersion,
     \item[(c)] the composition $M_f \xrightarrow{p_M}\CM \xrightarrow{q} M$ is projective, and
    \item[(d)] for the complement $\CZ_f: = \CX_f \setminus M_f$, we have
    \[
    \mathrm{codim}_{\CX_f}\left( \CZ_f\right)>N.
    \]
\end{enumerate}
\end{prop}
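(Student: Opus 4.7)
The plan is to use the moduli of framed objects in the spirit of Meinhardt \cite{Mein}. Twisting by a sufficiently positive line bundle and recording $k$ generating sections rigidifies a semistable sheaf, turning the stacky moduli $\CM$ into an honest scheme, while the ``space of framings'' is an affine space bundle over $\CM$; as $k$ grows, the locus of non-generating framings shrinks in codimension, giving (d), and the identifications built into this construction will deliver (a), (b), and (c).

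Concretely, first fix $\ell \gg 0$ so that, uniformly in $\CF \in \CM$, the higher cohomology $H^{>0}(\CF \otimes L^{\otimes \ell})$ vanishes and $H^0(\CF \otimes L^{\otimes \ell})$ has constant dimension $P(\ell)$; here $L$ is the polarization used in the GIT construction of $M$ in Section \ref{Sec2.4}. Then cohomology and base change make
\[
\CV_\ell := R^0\pi_{\CM,*}\big(\CF^{\mathrm{univ}} \otimes L^{\otimes \ell}\big)
\]
into a rank $P(\ell)$ vector bundle on $\CM$. For a parameter $k$ to be chosen later, set
\[
\CX_f := \mathrm{Tot}\big(\CV_\ell^{\oplus k}\big) \xrightarrow{\,p_\CX\,} \CM,
\]
which parameterizes pairs $(\CF,\varphi)$ with $\varphi \in \Hom(\CO_S^{\oplus k},\CF \otimes L^{\otimes \ell})$. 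Since $p_\CX$ is a vector bundle it is in particular an affine space bundle, giving (a), and $\CX_f$ is nonsingular by Lemma \ref{lem2.5}. Inside $\CX_f$, I define $M_f$ to be the open substack where the induced map $\BC^k \to H^0(\CF \otimes L^{\otimes \ell})$ is surjective, giving (b); by the choice of $\ell$ this is equivalent to surjectivity of the sheaf map $\CO_S(-\ell)^{\oplus k} \twoheadrightarrow \CF$, so $M_f$ represents a functor of ``quotient objects with framing'' whose automorphism groups are trivial, and $M_f$ is an honest nonsingular scheme.

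For (c), one realizes $M_f$ as the open subscheme of the Quot-scheme $\mathrm{Quot}_S(\CO_S(-\ell)^{\oplus k}, P)$ parameterizing quotients $\CO_S(-\ell)^{\oplus k} \twoheadrightarrow \CF$ with $\CF$ semistable, and invokes the framed GIT construction of \cite{Mein} to conclude that $M_f$ is proper over $M$ via the forget-framing morphism. For (d), each fiber of $p_\CX$ is an affine space $\Hom(\BC^k, H^0(\CF \otimes L^{\otimes \ell})) \cong \BC^{kP(\ell)}$, and its intersection with $\CZ_f$ is the determinantal locus of linear maps of rank $< P(\ell)$, which has codimension $k - P(\ell) + 1$. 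Since $p_\CX$ is smooth, the same codimension holds in $\CX_f$, so it suffices to take $k > N + P(\ell) - 1$. The main obstacle is (c): one must verify that the forget-framing morphism from $M_f$ to $M$ is genuinely projective and not merely quasi-projective, which requires a careful framed GIT argument treating strictly semistable sheaves—this is essentially the content of Meinhardt's framed moduli construction, and is where the spectral correspondence for Higgs bundles (case (B)) is needed to reduce to the surface setting.
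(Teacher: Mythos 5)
Your overall strategy is the one the paper uses (Meinhardt's framed objects: an affine bundle of framings over $\CM$, an open locus of ``good'' framings, and a determinantal codimension count for the bad locus), and your arguments for (a), (b) and (d) match the paper's --- your $\CX_f = \mathrm{Tot}(\CV_\ell^{\oplus k})$ is exactly the paper's $(\mathrm{Quot}^{ss}\times \BA)/\mathrm{PG}_m$ with $\BA = \mathrm{Hom}(\BC,\BC^m)^f$, and your bound $k-P(\ell)+1$ is their $f+1-m$. However, there is a genuine gap at (c), and it is precisely the point you flag as ``the main obstacle'' without resolving. You define $M_f$ as the open substack of $\CX_f$ where the framing $\BC^k \to H^0(\CF\otimes L^{\otimes \ell})$ is surjective. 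This naive locus has no reason to be proper over $M$: given a family of framed pairs over a punctured disc, semistable reduction extends the sheaf, but the limit of the framing can degenerate to a non-spanning (or zero) map, and realizing $M_f$ as an open subscheme of a Quot-scheme only gives quasi-projectivity. An open subset of something projective over $M$ is proper over $M$ only if it is also closed, which it is not here in general.

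The paper's fix is to \emph{not} define $M_f$ by the spanning condition. Instead one takes the $G_m=\mathrm{GL}_1\times\mathrm{GL}_m$ action on $\mathrm{Quot}\times\BA$ and the perturbed linearization $\CL^{a,\theta}=\CL_m^{\otimes a}\boxtimes\CO_\BA^{\theta}$ with $a>m\theta$, and sets $M_f := (\mathrm{Quot}\times\BA)^{ss}\sslash G_m$. The inclusions $\mathrm{Quot}^{ss}\times\BA_\theta^{ss}\subset(\mathrm{Quot}\times\BA)^{ss}\subset\mathrm{Quot}^{ss}\times\BA$ from \cite[Propositions 3.39 and Remark 3.40]{Mein} sandwich this GIT locus between your naive locus and the whole framing bundle, which is what gives the open immersion $j$ \emph{and} keeps the complement inside the determinantal locus, so (b) and (d) survive. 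Projectivity of $M_f\to M$ in (c) is then a variation-of-GIT statement: $M=(\mathrm{Quot}^{ss}\times\BA)\sslash G_m$ is the quotient for the linearization $\CL^{a,0}$, and the induced map between the two GIT quotients is projective (\cite[Theorem 3.42]{Mein}). So the missing idea is the correct (GIT) definition of $M_f$; with your definition, statement (c) would fail or at least remains unproven.
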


\begin{proof}
We complete the proof of Proposition \ref{prop3.2} in the following 3 steps.
\medskip

\noindent {\bf Step 1: Quiver moduli.} For a fixed integer $f>m$, let $Q_f$ be a quiver of 2 vertices $P_1$ and $P_2$ such that the dimension vector is $(1,m)$ and there are $f$ arrows from $P_1$ to $P_2$. Following King \cite{King}, the representation space \[
\BA := \mathrm{Hom}(\BC, \BC^m)^f \simeq \BC^{mf}
\]
of the quiver $Q_f$ admits a natural action of the group 
\[
G_m := \mathrm{GL}_1 \times \mathrm{GL}_m.
\]
Moreover, for any $\theta >0$, the character 
\[
\chi_\theta: G_m \to \BC^*, \quad \quad (g_1,g_m) \mapsto \mathrm{det}(g_1)^{-m\theta}\cdot \mathrm{det}(g_m)^\theta, \quad g_i \in \mathrm{GL}_i
\]
yields a stability condition on $\BA$. Here the stability is given by GIT associated with the trivial line bundle $\CO^\theta_\BA$, equipped with the $G_m$-linearization induced by $\chi_\theta$. We denote by $\BA_\theta^{ss} \subset \BA$ the semistable locus with respect to $\theta$. 
\medskip

\noindent {\bf Claim.} We have
\[
\mathrm{codim}_{\BA}\left( \BA \setminus \BA_\theta^{ss} \right) \to \infty, \quad \quad \textup{when}~~f\to \infty.
\]
\begin{proof}[Proof of the claim]
If we view $\BA$ as the parameter space of $m\times f$ matrices, the GIT-unstable loci are contained in the determinantal variety $D_{m-1}\subset \BA$ formed by matrices of rank $<m$. Hence we have
\[
\mathrm{dim}(\BA \setminus \BA_\theta^{ss})\leq \mathrm{dim} D_{m-1} = (m-1)(f+1),
\]
which implies that
\[
\mathrm{codim}_{\BA}\left( \BA \setminus \BA_\theta^{ss} \right) \geq mf - (m-1)(f+1) = f+1-m \to \infty 
\]
when $f \to \infty$.
\end{proof}
\medskip

\noindent{\bf Step 2: Moduli of framed objects.} The moduli space of framed objects \cite{Mein} combines the quotients (\ref{3.3_1}) and the quiver $Q_f$, which provides the scheme $M_f$ and the stack $\CX_f$ for Proposition \ref{prop3.6}. We recall the construction as follows.

Consider the natural $G_m$-action on the product 
\[
\mathrm{Quot} \times \BA
\]
where the action on the first factor passes through the obvious $\mathrm{GL}_m$-action and the action on the second factor is given in Step 1. Since the diagonal torus
\[
\mathrm{GL}_1 \hookrightarrow \mathrm{GL}_1\times \mathrm{GL}_m = G_m
\]
acts trivially on both $\mathrm{Quot}$ and $\BA$, the $G_m$-action on $\mathrm{Quot} \times \BA$ further passes through a $\mathrm{PG}_m(:= G_m/\mathrm{GL}_1)$-action.

We choose $a>0$ such that $a > m\theta$, and we consider the $G_d$-linearization
\begin{equation}\label{3.3_2}
\CL^{a,\theta} : = \CL_m^{\otimes a} \boxtimes \CO^\theta_{\BA}
\end{equation}
on $\mathrm{Quot}\times \BA$. Let
\[
(\mathrm{Quot}\times \BA)^{ss} \subset   \mathrm{Quot}\times \BA
\]
be the GIT-semistable locus associated with (\ref{3.3_2}). By a calculation using the Hilbert--Mumford criterion, it was proven in \cite{Mein} (under a more general setup) that we have the open immersions
\begin{equation}\label{eqnnn48}
   \mathrm{Quot}^{ss} \times \BA^{ss}_\theta \subset (\mathrm{Quot}\times \BA)^{ss}  \subset \mathrm{Quot}^{ss} \times \BA.
\end{equation}
Here the first inclusion is given in \cite[Remark 3.40]{Mein} and the second inclusion is given in \cite[Proposition 3.39]{Mein}.

We define
\begin{equation}\label{3.3_3}
M_f :=  (\mathrm{Quot}\times \BA)^{ss}\sslash G_m = (\mathrm{Quot}\times \BA)^{ss} /\mathrm{PG}_m, \quad \CX_f := (\mathrm{Quot}^{ss}\times \BA)/ \mathrm{PG}_m.
\end{equation}
Note that by \cite[Proposition 3.39]{Mein}, the scheme $M_f$ can be interpreted as the moduli of framed objects which is nonsingular by \cite[Corollary 3.41]{Mein}.

\medskip
\noindent{\bf Step 3: Properties.} We show that $M_f$ and $\CX_f$ defined in (\ref{3.3_3}) fit into the commutative diagram (\ref{eqn46}) and satisfy the properties (a,b,c). Moreover, they also satisfy (d) when $f \to \infty$.

The open immersion
\[
j: M_f \hookrightarrow \CX_f
\]
is induced by the second inclusion of (\ref{eqnnn48}). The quotient stack $\CX_f$ admits a natural map to $\CM$ via the natural projection
\[
p_\CX: \CX_f =(\mathrm{Quot}^{ss}\times \BA)/ \mathrm{PG}_m \to \mathrm{Quot}^{ss}/ \mathrm{PG}_m  = \CM
\]
which is an $\BA$-bundle. By setting $p_M = p_\CX\circ j$, we obtain the commutative diagram (\ref{eqn46}) and (a,b) immediately. The property (c) follows from \cite[Theorem 3.42]{Mein}. 

We note that the morphism from $M_f$ to $M$ has a natural geometric interpretation. In fact, we have
\[
M = (\mathrm{Quot}^{ss} \times \BA)\sslash G_m
\]
by \cite{Mein} the last paragraph before Section 3.7. Therefore the contraction
\[
q\circ p_M: M_f = (\mathrm{Quot}\times \BA)^{ss}\sslash G_m \to (\mathrm{Quot}^{ss} \times \BA)\sslash G_m=  M
\]
can be viewed as a variation of GIT from the $G_m$-linearization (\ref{3.3_1}) to the $G_m$-linearization
\[
\CL^{a,0} : = \CL_m^{\otimes a} \boxtimes \CO_\BA
\]
with the trivial $G_m$-action on the second factor.

It remains to prove (d). By (\ref{eqnnn48}) and the claim in Step 1, we have
\[
\mathrm{codim}_{\CX_f}(\CZ_f) \geq \mathrm{codim}_{\BA}(\BA \setminus \BA_{\theta}^{ss}) \to \infty, \quad \quad \textup{when}~~f\to \infty.
\]
This completes the proof.
\end{proof}

\subsection{Proof of Proposition \ref{prop3.4}}

To construct the desired splitting, we follow the approach of \cite{Mein}. Namely we use the construction in the previous section to approximate $\CM$ with $M_f$ and apply the decomposition theorem to the proper morphism $q\circ p_M: M_f \rightarrow M$.

Fix $N > 0$ and choose $f$ as in Proposition \ref{prop3.6}.
Let $i: \CZ_f \hookrightarrow M_f$ denote the closed immersion which has codimension larger than $N$.
Consider the excision triangle on $\CX_f$ 
\begin{equation}\label{triangle}
i_{!}i^{!}\BQ_l \rightarrow \BQ_l \rightarrow Rj_*j^*\BQ_l\rightarrow
i_{!}i^{!}\BQ_l [1].
\end{equation}
Since $\CX_f$ is nonsingular, we have 
\[
i^!\BQ_l  = \omega_{\CZ_f}[-2\mathrm{dim}\CX_f];
\]
also, from Section V.2 of \cite{Iversen}, we have that the complex $\omega_{\CZ_f}$ is concentrated in degrees $[-2\mathrm{dim}\CZ_f, \infty]$.   By combining these with the codimension bound for $\CZ_f$,  we have that the complex $i^!\BQ_l$ is supported in degrees $[2N, \infty]$.

If we pushforward the excision triangle (\ref{triangle}) to $M$ along 
\[
q\circ p_{\CX}: \CX_f \rightarrow M,
\]
we obtain the triangle
\begin{equation}\label{eqn51}
R(q\circ p_{\CX})_*i_{!}i^{!}\BQ_l \rightarrow R(q\circ p_{\CX})_*\BQ_l \rightarrow R(q\circ p_{\CX})_*Rj_*j^*\BQ_l\rightarrow R(q\circ p_{\CX})_*i_{!}i^{!}\BQ_l[1].
\end{equation}
Since the derived pushforward functor preserves the subcategory $D^{\geq 0}$ and its shifts, the leftmost term of (\ref{eqn51}) is supported in degrees $[2N, \infty]$ as well.   Furthermore, by \cite[Lemma 3.3]{LO3}, after changing $N$ by a bounded amount, we have the same support result for perverse cohomology sheaves.  In other words, we have the vanishing
\[
^{\mathfrak{p}}\tau_{\leq 2N} R(q\circ p_{\CX})_*i_{!}i^{!}\BQ_l  = 0
\]
and, by (\ref{eqn51}), the quasi-isomorphism
\begin{equation}\label{eqn52}
^{\mathfrak{p}}\tau_{< 2N} R(q\circ p_{\CX})_*\BQ_l \xrightarrow{\sim } {^\mathfrak{p}}{\tau_{< 2N}} R(q\circ p_{\CX})_*Rj_*j^*\BQ_l.
\end{equation}

Finally, since $p_\CM: \CX_f \rightarrow \CM$ is an affine space bundle, so that ${Rp_{\CX}}_*\BQ_l = \BQ_l$, we can rewrite (\ref{eqn52}) as
\begin{equation}\label{truncation}
^{\mathfrak{p}}\tau_{< 2N} Rq_*\BQ_l  \xrightarrow{\sim} {^\mathfrak{p}}{\tau_{< 2N}} R(q\circ p_{\CM})_*\BQ_l.
\end{equation}

By the decomposition theorem for the proper, surjective morphism $q\circ p_{\CM}: M_f \rightarrow M$, the right-hand side of (\ref{truncation}) can be non-canonically written as a direct sum of its (shifted) perverse cohomology sheaves:
\begin{equation}\label{splitting}
^{\mathfrak{p}}{\tau_{< 2N}} R(q\circ p_{\CM})_*\BQ_l \simeq \bigoplus_{k = \mathrm{dim}M}^{2N-1} \CP_{k}[-k].
\end{equation}
The lowest perverse cohomology sheaf $\CP_{\mathrm{dim}M}$ occurs in degree $\mathrm{dim}M$, because of surjectivity of the morphism. After restricting to an open subset $V \subset M$ over which $q\circ p_\CM$ is smooth, it is given by the shifted local system whose fiber over $x \in V$ is $H^0(M_{f,x}, \BQ_l)$ with $M_{f,x}$ the closed fiber of $M_f$ over $x$.  In particular, $\CP_{\mathrm{dim}M}$ contains $\mathrm{IC}_M$ as a direct summand.

If we combine the splitting (\ref{splitting}) with \eqref{truncation}, we see that the composition
\[
\mathrm{IC}_M[-\mathrm{dim}M] \rightarrow \CP_{\mathrm{dim}M}[-\mathrm{dim}M] \rightarrow {^\mathfrak{p}}{\tau_{< 2N}} Rq_*\BQ_l
\]
%= ^{p}\tau_{\leq \mathrm{dim}(M)} Rq_*\BQ_l
admits a splitting
\[
u_N: {^\mathfrak{p}}{\tau_{< 2N}} Rq_*\BQ_l \rightarrow \mathrm{IC}_M[-\mathrm{dim}M].
\]

As Ext-groups between perverse sheaves vanish outside of bounded degree, for $N$ sufficiently large, we have the stabilization
\[
\operatorname{Hom}\left(^{\mathfrak{p}}\tau_{< 2N} Rq_*\BQ_l, \mathrm{IC}_M[-\mathrm{dim}M]\right)
 = \operatorname{Hom}\left(^{\mathfrak{p}}\tau_{< 2N+1} Rq_*\BQ_l, \mathrm{IC}_M[-\mathrm{dim}M]\right).
 \]
In other words, for the canonical morphism 
\[
{v_N:} {^{\mathfrak{p}} \tau_{< 2N}} Rq_*\BQ_l \rightarrow {^\mathfrak{p}}{\tau_{< 2(N+1)}} Rq_*\BQ_l,
\]
the splittings $u_N$ are compatible in the sense that 
$u_{N} = u_{N+1}\circ v_{N}$.

By \cite[Lemma 4.3.2]{LO1}, we have that the unbounded complex $Rq_*\BQ_l$ is the homotopy colimit of its truncations, \emph{i.e.},
\[
Rq_*\BQ_l = \operatorname{hocolim}_{N \rightarrow \infty} {^\mathfrak{p}}{\tau_{< 2N}} Rq_*\BQ_l.
\]
So as a result, the splittings $u_N$ yield a splitting
$$u: Rq_*\BQ_l \rightarrow \mathrm{IC}_M[-\mathrm{dim}M],$$
and consequently, a direct summand decomposition (\ref{eqn38}) as desired.
\qed

\section{Proof of the main theorem}
\subsection{Overview}
We complete the proof of Theorem \ref{thm0.2}. For the approach, we combine the support inequality of Theorem \ref{thm3.1} and techniques of \cite{CL, dC_SL}.

\subsection{$\delta$-inequalities for integral curves}\label{Sec4.2}
As in Section \ref{Sec2.2}, we consider the relative degree 0 Picard variety
\[
\mathrm{Pic}^0({\CC_B/B}) \to B
\]
associated with a family of curves $\pi_B: \CC_B \to B$. We obtain the $\delta$-invariants computing the dimensions of the affine parts of the group schemes 
\[\delta(b): = \mathrm{dim}\left(\mathrm{Pic}^0(\CC_b)^{\mathrm{aff}}\right), \quad b\in B,
\]
where $\CC_b$ is the fiber of $\pi_B$ over $b$. For a closed subvariety $Z\subset B$, we define $\delta_Z$ to be $\delta(b)$ for a general point in $Z$.

In Section \ref{Sec4.2}, we first focus on the case of a flat family of \emph{integral} curves
\[
\pi_B: \CC_B \to B
\]
satifying that the compatified Jacobian 
\[
\mathrm{Pic}^0({\CC_B/B}) \subset \overline{\CJ\CC}_B
\]
(parameterizing degree 0 torsion free sheave on the curves $\CC_b$) is nonsingular.

The following lemma is the ``Severi inequality" which is parallel to \cite[(41)]{dC_SL} for Higgs bundles. The proof of \cite[(41)]{dC_SL} works identically here since the only structure used for the spectral curves $\CC_B \to B$ in \cite{dC_SL} is the smoothness of $\overline{\CJ\CC}_B$. We give a proof for the reader's convenience.

\begin{lem}\label{lem4.1}
For any subvariety $Z\subset B$, we have
\[
\mathrm{codim}Z \geq \delta_Z.
\]
\end{lem}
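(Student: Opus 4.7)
The plan is to adapt the argument of \cite[proof of (41)]{dC_SL} verbatim, since, as the excerpt emphasizes, the only structural input used there is the smoothness of the total compactified Jacobian $\overline{\CJ\CC}_B$. The strategy is to reduce Lemma \ref{lem4.1} to the classical Severi-Teissier codimension bound for the equigeneric stratification of a plane curve singularity, and to use the smoothness of $\overline{\CJ\CC}_B$ to transfer that versal bound to the family $\pi_B$.

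First, I would identify $\delta(b)$ with the classical $\delta$-invariant of the integral curve $\CC_b$. For the normalization $\nu:\tilde\CC_b\to\CC_b$, the standard exact sequence
\[
1\to \mathcal{G}\to \mathrm{Pic}^0(\CC_b)\to \mathrm{Pic}^0(\tilde\CC_b)\to 1
\]
exhibits $\mathrm{Pic}^0(\CC_b)$ as an extension of an abelian variety by an affine group $\mathcal{G}$ of dimension $\dim_\BC(\nu_*\CO_{\tilde\CC_b}/\CO_{\CC_b})$. Hence $\delta(b)=p_a(\CC_b)-g(\tilde\CC_b)$, and since $\pi_B$ is flat with integral fibers, the arithmetic genus $p_a$ is constant while the drop of geometric genus equals $\delta(b)$. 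It therefore suffices to show that for every $\delta\ge 0$ the stratum $B_{\ge\delta}:=\{b\in B:\delta(b)\ge\delta\}$ has codimension at least $\delta$ in $B$.

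For this, at a general point $b\in B_{\ge\delta}$ I would choose a smooth point $(b,\mathcal{F})$ of the total space $\overline{\CJ\CC}_B$ lying over $b$; by Fantechi-Göttsche-van Straten (and Altman-Iarrobino-Kleiman), the fiber $\overline{\mathrm{Pic}}^0(\CC_b)$ is an irreducible local complete intersection of dimension $p_a$ whose smooth locus is open and dense, so such $\mathcal{F}$ exist. The smoothness of $\overline{\CJ\CC}_B$ at $(b,\mathcal{F})$ produces a short exact sequence of tangent spaces whose kernel has dimension $p_a$, and the standard deformation theory of rank-one torsion-free sheaves on $\CC_b$ identifies the image of the Kodaira-Spencer map $T_b B\to T_{[\CC_b]}\mathrm{Def}(\CC_b)$ as a subspace transverse to the equigeneric codirection. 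The classical Severi-Teissier bound for the equigeneric stratum in the versal deformation of a plane curve singularity then gives $\operatorname{codim}_B B_{\ge\delta}\ge \delta$. The main obstacle is precisely this transversality statement — it is where the smoothness hypothesis on $\overline{\CJ\CC}_B$ enters decisively, and the detailed verification proceeds identically to \cite[proof of (41)]{dC_SL}.
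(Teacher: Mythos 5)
Your proposal is correct and follows essentially the same route as the paper: both arguments map $B$ to a (semi-)universal deformation, use the smoothness of $\overline{\CJ\CC}_B$ (via Fantechi--G\"ottsche--van Straten, as in \cite[proof of (41)]{dC_SL}) to obtain transversality of the classifying map to the $\delta$-constant strata, and then invoke the classical codimension-$\geq\delta$ bound for the equigeneric locus in the versal deformation of a planar singularity. The only difference is presentational --- you phrase the transversality infinitesimally through the Kodaira--Spencer map, whereas the paper runs the dimension count directly on $\phi(B)\cap B_{\mathrm{univ}}^{\delta}$.
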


\begin{proof}
Let $\CC_{\mathrm{univ}} \to B_{\mathrm{univ}}$ be a semi-universal family of curves such that the family $\CC_B \to B$ is induced by a map 
\[
\phi:B \to B_{\mathrm{univ}}.
\]
Let $B_{\mathrm{univ}}^\delta \subset B_{\mathrm{univ}}$ be the locus given by $\{b\in B: \delta(b) = \delta\}$. Since $\overline{\CJ\CC}_B$ is nonsingular, by the paragraph following \cite[Theorem 2]{FGvS}, the image $\phi(B) \subset B_{\mathrm{univ}}$ meets $B_{\mathrm{univ}}^\delta$ transversally. Hence for any irreducible subvariety $Z \subset B$ whose general points lie in $B_{\mathrm{univ}}^\delta$, we have
\[
 \mathrm{dim}Z \leq \mathrm{dim}(\phi(B) \cap B_{\mathrm{univ}}^\delta) = \mathrm{dim}\phi(B) + \mathrm{dim}B_{\mathrm{univ}}^\delta - \mathrm{dim}B_{\mathrm{univ}} \leq \mathrm{dim}B-\delta
\]
where the equality follows from the transversality. We conclude that 
\[
\mathrm{codim}Z = \mathrm{dim}B-\mathrm{dim}Z \geq \delta = \delta_Z. \qedhere
\]
\end{proof}

Our major application of Lemma \ref{lem4.1} is for curves in a linear system on a del Pezzo surface.

\begin{cor}\label{cor4.2}
Let $\beta$ be a curve class on a del Pezzo surface, let 
\begin{equation}\label{eqn_cor4.2}
\pi_B: \CC_B \to B=\BP H^0(S, \CO_S(\beta))
\end{equation}
be the universal curve in the linear system, and let $\pi^\circ_B: \CC^\circ \to B^\circ$ be the restriciton of (\ref{eqn_cor4.2}) to the subset $B^\circ \subset B$ of integral curves. Then for any irreducible subvariety $Z\subset B$ whose generic point lies in $B^\circ$, we have
\[
\mathrm{codim}Z \geq \delta_Z.
\]
\end{cor}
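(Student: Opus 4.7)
The plan is to apply Lemma \ref{lem4.1} directly to the restricted family $\pi_B^\circ: \CC^\circ \to B^\circ$ of integral curves, and then transfer the resulting inequality from $B^\circ$ back to $B$. The key hypothesis we must verify for Lemma \ref{lem4.1} is that the relative compactified Jacobian $\overline{\CJ\CC}_{B^\circ}$ of the family $\pi_B^\circ$, parameterizing rank-$1$ torsion-free degree-$0$ sheaves on the integral fibers, is nonsingular.

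To check nonsingularity of $\overline{\CJ\CC}_{B^\circ}$, I would identify it with an open subset of a moduli space $M^L_{\beta,\chi}$ (for $\chi = 1-g_\beta$, and any polarization $L$ on $S$). Concretely, since a rank-$1$ torsion-free sheaf on an integral curve is automatically stable as a $1$-dimensional sheaf on $S$ regardless of $L$, the natural map from $\overline{\CJ\CC}_{B^\circ}$ to $M^L_{\beta,\chi}$ is an open immersion onto the locus $h^{-1}(B^\circ)$. Now the argument of Lemma \ref{lem2.5} shows that for any $1$-dimensional stable sheaf $\CF$ on $S$ we have $\Ext^2_S(\CF,\CF)=0$ whenever $-K_S$ is effective — and this works verbatim for a del Pezzo surface (no toric hypothesis required at this stage). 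Hence the moduli stack is smooth along the integral locus, and since over that locus it is a $\BG_m$-gerbe over its coarse moduli space, $\overline{\CJ\CC}_{B^\circ}$ is nonsingular as a scheme.

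With the hypothesis of Lemma \ref{lem4.1} verified for $\pi_B^\circ$, that lemma yields $\mathrm{codim}_{B^\circ}(Z') \geq \delta_{Z'}$ for every irreducible subvariety $Z' \subset B^\circ$. Given $Z \subset B$ whose generic point lies in $B^\circ$, I would apply this to $Z^\circ := Z \cap B^\circ$, which is an open dense subvariety of $Z$. Since $B^\circ$ is open in $B$, one has $\mathrm{codim}_B Z = \mathrm{codim}_{B^\circ} Z^\circ$, and since $\delta_Z$ depends only on the generic point of $Z$ we also have $\delta_Z = \delta_{Z^\circ}$. The inequality $\mathrm{codim}_B Z \geq \delta_Z$ follows.

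The only step I expect to require any care is the verification that the compactified Jacobian is nonsingular over the integral locus; everything else is formal once that smoothness, together with Lemma \ref{lem4.1}, is in hand. I do not foresee a genuine obstacle here, since smoothness of the moduli stack on del Pezzo surfaces is essentially the $\Ext^2$-vanishing computation already given in the proof of Lemma \ref{lem2.5}.
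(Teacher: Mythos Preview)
Your proposal is correct and follows essentially the same approach as the paper: identify $\overline{\CJ\CC}_{B^\circ}$ with an open subset of the moduli of stable $1$-dimensional sheaves, invoke the $\Ext^2$-vanishing of Lemma~\ref{lem2.5} to get smoothness, and then apply Lemma~\ref{lem4.1}. Your added remarks about transferring the inequality from $B^\circ$ to $B$ are correct but formal, and the paper simply omits them.
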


\begin{proof}
Since the compactified Jacobian $\overline{\CJ\CC}_{\CC^\circ}$ associated with $\pi^\circ_B: \CC^\circ \to B^\circ$ is an open subvariety of the moduli of stable pure 1-dimensional sheaves supported in the class $\beta$, we deduce the smoothness of $\overline{\CJ\CC}_{\CC^\circ}$ from the smoothness of the moduli stack (\emph{c.f.} Lemma \ref{lem2.5}). Hence Corollary \ref{cor4.2} follows by applying Lemma \ref{lem4.1} to the family $\pi^\circ_B: \CC^\circ \to B^\circ$.
\end{proof}

When the integral locus $B^\circ \subset B$ is nonempty, by Proposition \ref{prop2.10} the moduli space $M^L_{\beta,\chi}$ is irreducible for any polarization $L$ and $\chi \in \BZ$ satisfying that 
\[
\mathrm{dim}M^L_{\beta,\chi} = \mathrm{dim} \overline{\CJ\CC}_{B^\circ} = \beta^2+1.
\]
We define the following invariant associated with the curve class $\beta$:
\begin{equation}\label{Phi}
\Phi_\beta : = \mathrm{dim}\mathrm{Pic}^0({\CC^\circ/B^\circ})-2\mathrm{dim}B = \mathrm{dim}M^L_{\beta,\chi} -2\mathrm{dim}B = 1+\beta\cdot K_S
\end{equation}
where the last eqaulity follows from Lemma \ref{lem2.1}.

\subsection{$\delta$-inequalities for linear systems}\label{sec4.3}
In Section \ref{sec4.3}, we assume that $\beta$ is an ample curve class on a del Pezzo surface $S$.  We introduce a stratification of 
\[
B = \BP H^0(S, \CO_S(\beta)) 
\]
analogous to the stratification introduced in \cite[Section 9]{CL} and \cite[Section 5.2]{dC_SL} for Higgs bundles.

We consider the following $s$-tuples
\begin{equation}\label{type}
\underline{\beta} =\big{(}(m_1, \beta_1), (m_2, \beta_2), \dots, (m_s, \beta_s) \big{)}
\end{equation}
where $s\geq 1$, $m_i\geq 1$, and $\beta_i$ are (not necessarily distinct) curve classes on $S$ satisfying
\begin{enumerate}
    \item[(1)] $\sum_{i=1}^s m_i\beta_i = \beta$, and 
    \item[(2)] there exists an integral curve in $|\beta_i|$ for each $1\leq i \leq s$.
\end{enumerate}
The objects (\ref{type}) are called \emph{types} of the curves in the linear system $B=|\beta|$. We say that 
\[
\underline{\beta} =\big{(}(m_1, \beta_1), (m_2, \beta_2), \dots, (m_s, \beta_s) \big{)} \quad \mathrm{and}\quad \underline{\beta}' =\big{(}(m'_1, \beta'_1), (m'_2, \beta'_2), \dots, (m'_s, \beta'_{s'}) \big{)}
\]
give the same type, if $s=s'$, and there exists a bijection
\[
\sigma: \{ 1,2,\dots, s \} \rightarrow \{1,2,\dots, s\}
\]
such that $\beta_i = \beta'_{\sigma(i)}$ and $m_i = m'_{\sigma(i)}$. We have a stratification according to the types of the curves in $|\beta|$:
\[
B = \bigsqcup_{\underline{\beta}} B_{\underline{\beta}}
\]
where each $B_{\underline{\beta}}$ is a locally closed subset of $B$ formed by curves in $|\beta|$ of type $\underline{\beta}$:
\[
B_{\underline{\beta}} = \left\{ E= \sum_im_iE_i \in |\beta|,~~ E_i \in |\beta_i|,~~~ E_i \textup{~~are~~distinct~~integral~~curves} \right\}.
\]

\begin{prop}\label{prop4.3}
Let $Z\subset B$ be an irreducible subvariety whose general points have type 
\[
\underline{\beta} = \big{(} (m_1, \beta_1), (m_2, \beta_2), \dots, (m_s, \beta_s) \big{)}. 
\]
Then we have
\[
\Phi_\beta +\mathrm{codim}Z \geq \sum_{i=1}^s \Phi_{\beta_i}+\delta_Z.
\]
\end{prop}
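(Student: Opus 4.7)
The plan is to parametrize the stratum $B_{\underline{\beta}}$ by a product of integral loci, reduce the codimension bound to the $\delta$-inequality for integral curves (Corollary \ref{cor4.2}), and verify that the resulting inequality matches the claim via a numerical identity. Specifically, after grouping coincident pairs $(m_i,\beta_i)$ and modding out by the corresponding symmetric group action, the natural map
\[
\pi: \prod_i B^\circ_{\beta_i}\setminus\Delta \longrightarrow B_{\underline{\beta}}, \qquad (E_1,\dots,E_s)\mapsto \sum_i m_i E_i,
\]
is a finite \'etale cover, so $\dim B_{\underline{\beta}} = \sum_i \dim|\beta_i|$. I pick an irreducible component $Z'$ of $\pi^{-1}(Z\cap B_{\underline{\beta}})$ dominating $Z$, so that $\dim Z'=\dim Z$, and let $Z'_i\subset B^\circ_{\beta_i}$ denote the projection of $Z'$ onto the $i$-th factor. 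From $Z'\subseteq \prod_i Z'_i$ I get
\[
\mathrm{codim}\,Z \;\geq\; \mathrm{codim}\, B_{\underline{\beta}} + \sum_i \mathrm{codim}_{|\beta_i|} Z'_i \;\geq\; \mathrm{codim}\, B_{\underline{\beta}} + \sum_i \delta_{Z'_i},
\]
where the last inequality is Corollary \ref{cor4.2} applied to each $Z'_i\subset B^\circ_{\beta_i}$.

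Next, I compute $\delta_Z$ via the Chevalley decomposition of $\mathrm{Pic}^0$. For a generic $b\in Z$, corresponding to $E=\sum m_i E_i$ with each $E_i$ generic in $Z'_i$, the abelian part of $\mathrm{Pic}^0(E)$ equals $\prod_i \mathrm{Jac}(\tilde E_i)$ of dimension $\sum_i g(\tilde E_i)$; the non-reducedness of $E$ and the incidences $E_i\cap E_j$ contribute only to the affine part. For $\beta$ ample on a del Pezzo, Serre duality combined with Lemma \ref{lem2.1} gives $H^1(S,\CO_S(-E))=0$, hence $h^0(\CO_E)=1$, and therefore $\dim \mathrm{Pic}^0(E) = h^1(\CO_E)=p_a(E)=p_a(\beta)$, where I write $p_a(\gamma):=1+\frac{1}{2}\gamma(\gamma+K_S)$. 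Substituting $g(\tilde E_i) = p_a(\beta_i)-\delta(E_i)$ for each integral component yields
\[
\delta_Z \;=\; \Bigl(p_a(\beta) - \sum_i p_a(\beta_i)\Bigr) + \sum_i \delta_{Z'_i}.
\]

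Combining the two displays, the desired inequality $\Phi_\beta + \mathrm{codim}\,Z \geq \sum_i \Phi_{\beta_i}+\delta_Z$ reduces to the numerical identity
\[
\mathrm{codim}\, B_{\underline{\beta}} \;=\; \sum_i \Phi_{\beta_i} - \Phi_\beta + p_a(\beta) - \sum_i p_a(\beta_i),
\]
which in turn follows from the elementary relation $\Phi_\gamma + \dim|\gamma| = p_a(\gamma)$, valid for every effective class $\gamma$ on a del Pezzo surface; this is directly verified from $\Phi_\gamma = 1+\gamma\cdot K_S$, $\dim|\gamma|=\frac{1}{2}\gamma(\gamma-K_S)$ (Lemma \ref{lem2.1}), and the formula for $p_a$ above. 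The main obstacle is the structural statement underlying the $\delta$-formula: namely, that for a non-reduced reducible $E=\sum m_i E_i$ in an ample linear system on a del Pezzo, the abelian part of $\mathrm{Pic}^0(E)$ factors precisely as $\prod_i \mathrm{Jac}(\tilde E_i)$, with every ``extra'' dimension (coming from the multiplicities $m_i$ and from the intersections $E_i\cap E_j$) absorbed into the affine part. Once this decomposition is established (using the ampleness of $\beta$ to kill the term $h^0(\CO_E)-1$ that could otherwise inflate $\dim\mathrm{Pic}^0(E)$ beyond $p_a(E)$), the remainder of the argument is the bookkeeping reduction to Corollary \ref{cor4.2} and the identity above.
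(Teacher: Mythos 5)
Your argument is correct and is essentially the paper's own proof: both reduce to the Severi inequality for integral curves (Corollary \ref{cor4.2}) applied factor by factor after lifting the generic point of $Z$ along $\prod_i|\beta_i|\to B$, together with the fact that the abelian part of $\mathrm{Pic}^0\bigl(\sum_i m_iE_i\bigr)$ is $\prod_i\mathrm{Jac}(\widetilde{E}_i)$, and your bookkeeping via $p_a$ and the identity $\Phi_\gamma+\dim|\gamma|=p_a(\gamma)$ is equivalent to the paper's use of $\dim(\mathrm{Pic}_{\beta,\eta})^{\mathrm{ab}}=\Phi_\beta+\dim B-\delta_Z$. The structural claim you single out as the main obstacle is exactly what the paper imports from the first two paragraphs of the proof of \cite[Corollary 5.4.4]{dC_SL} rather than proving from scratch, so your proposal matches the paper's route.
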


\begin{proof}
We apply a similar argument as in \cite[Corollary 5.4.4]{dC_SL} for Higgs bundles. 

For a curve class $\beta_i$, we denote by $|\beta_i|^\circ$ the open subvariety of $|\beta_i|=\BP H^0(S, \CO_S(\beta))$ consisting of integral curves. We define
\begin{equation}\label{eqnn44}
\CC^\circ_{\beta_i} \to |\beta_i|^\circ, \quad \mathrm{Pic}_{\beta_i} \to |\beta_i|^\circ
\end{equation}
to be the universal curve and the corresponding relative degree 0 Picard variety over $|\beta_i|^\circ$. For a type $\underline{\beta}$ as in (\ref{type}), we have a finite morphism
\[
\lambda_{\underline{\beta}}: B'_{\underline{\beta_i}}:= \prod_{i=1}^s |\beta_i| \rightarrow B, \quad \quad (E_i)_{i=1}^s \mapsto \sum_{i=1}^s m_iE_i
\]
whose image is
\[
\mathrm{Im}(\lambda_{\underline{\beta}}) = \overline{{B_{\underline{\beta_i}}}} \subset B.
\]
The morphism $\lambda_{\underline{\beta}}$ sends the open subvariety $\prod_{i=1}^s |\beta_i|^\circ \subset B'_{\underline{\beta_i}}$ to ${{B_{\underline{\beta_i}}}} \subset B$.

Now we assume that $\eta \in B$ is the generic point of $Z$. By the first two paragraphs of \cite[Proof of Corollary 5.4.4]{dC_SL}, there exists a point 
\[
\eta' = (\eta_1,\eta_2,\dots, \eta_s)\in \lambda_{\underline{\beta}}^{-1}(\eta) \subset B'_{\underline{\beta_i}}, \quad \eta_i \in |\beta_i|^\circ
\]
satisfying that 
\begin{enumerate}
    \item[(i)] $\mathrm{dim}\overline{\{\eta\}} = \mathrm{dim}\overline{\{\eta'\}} = \mathrm{dim}Z $, 
    \item[(ii)] $\mathrm{dim}(\mathrm{Pic}_{\beta,\eta})^\mathrm{ab} = \sum_{i=1}^s\mathrm{dim}(\mathrm{Pic}_{\beta_i,\eta_i})^\mathrm{ab}$.
\end{enumerate}
Here for any connected commutative group scheme $P$, we use the notation $P^\mathrm{ab}$ to denote its abelian variety part in the Chevalley decomposition (\ref{Chevalley}). Since by definition (see (\ref{Phi})) we have
\[
\mathrm{dim}(\mathrm{Pic}_{\beta,\eta})^\mathrm{ab} = \Phi_\beta+\mathrm{dim}B-\delta_Z, \quad  \mathrm{dim}(\mathrm{Pic}_{\beta_i,\eta_i})^\mathrm{ab} = \Phi_{\beta_i}+\mathrm{dim}|\beta_i|-\delta_{\overline{\{\eta_i\}}},
\]
we obtain from (ii) that
\begin{equation}\label{eqn44}
\delta_Z + \sum_{i=1}^s\Phi_{\beta_i} -\Phi_\beta = \mathrm{dim}B -\sum_{i=1}^s\mathrm{dim}|\beta_i|+\sum_{i=1}^s \delta_{\overline{\{\eta_i\}}}.
\end{equation}

Applying Corollary \ref{cor4.2} to (\ref{eqnn44}), we have
\[
\delta_{\overline{\{\eta_i\}}} \leq \mathrm{dim}|\beta_i| - \mathrm{dim}\overline{\{\eta_i\}},
\]
which implies that the righthand side of (\ref{eqn44}) satisfies
\[
\mathrm{RHS~~of~~(\ref{eqn44})} \leq \mathrm{dim}B - \sum_{i=1}^s\mathrm{dim}\overline{\{\eta_i\}}= \mathrm{dim}B-\mathrm{dim}Z =\mathrm{codim}Z.
\]
This completes the proof.
\end{proof}

\subsection{Higgs bundles} The analog of Proposition \ref{prop4.3} for the moduli of Higgs bundles is exactly the inequality (75) of \cite[Corollary 5.4.4]{dC_SL}. Altough the paper \cite{dC_SL} works with Higgs bundles with $\mathrm{gcd}(n, \chi)=1$, Corolloary 5.4.4 only concerns the group scheme  --- the degree 0 Picard variety associated with the universal spectral curve, which is not constrained by the coprime assumption.

We now rewrite the inequality (75) of \cite[Corollary 5.4.4]{dC_SL} in Proposition \ref{prop4.4} parallel to the form of Proposition \ref{prop4.3}.

Consider the Hitchin fibration 
\[
h: \widetilde{M}_{n,\chi} \rightarrow B
\]
associated with $C$, $n$, $\chi$, and an effective divisor $D$ with degree $\mathrm{deg}(D)>2g-2$. Recall from \cite[Section 5.2]{dC_SL} that the Hitchin base
\[
B = \bigoplus_{i=1}^nH^0(C, \CO(iD))
\]
admits a stratification
\begin{equation}\label{strat}
B = \bigsqcup_{(n_\bullet, m_\bullet)} B_{n_\bullet, m_\bullet}.
\end{equation}
Here a type of spectral curves is given by $(n_\bullet, m_\bullet)$ with 
\[
s\geq 1, \quad n_\bullet = (n_1, n_2, \dots ,n_s), \quad m_\bullet =(m_1, m_2\ \dots, m_s), \quad \sum_{i=1}^s m_in_i = n,
\]
and $B_{n_\bullet, m_\bullet}$ are formed by spectral curves $E \subset \mathrm{Tot}(\CO_C(D))$ of the form $E= \sum_im_iE_i$ with $E_i$ distinct integral spectral curves which are degree $n_i$ covers of the zero section $C$. This actually coincides with the notion of (\ref{type}) since the class of any spectral curve in the surface $\mathrm{Tot}(\CO_C(D))$ is of the form $\beta_i = n_i [C]$ with $[C]$ the curve class of the zero section $C \subset \mathrm{Tot}(\CO_C(D))$. We refer to \cite[Section 5]{dC_SL} for more detials about the stratification (\ref{strat}).

We define the following invariant similar to (\ref{Phi}):
\[
\Phi_n = \mathrm{dim}\widetilde{M}_{n,\chi} - 2\mathrm{dim}B = 1+(2g-2-\mathrm{deg}(D))n
\]
where we use the dimension formulas of \cite[(77)]{dC_SL} in the last identity.

\begin{prop}\label{prop4.4}
Let $Z\subset B$ be an irreducible subvariety whose general points have type $(n_\bullet, m_\bullet)$. Then we have
\[
\Phi_n +\mathrm{codim}Z \geq \sum_{i=1}^s \Phi_{n_i}+\delta_Z.
\]
Here $\delta_Z$ is defined via the relative degree 0 Picared variety associated with the spectral curves.
\end{prop}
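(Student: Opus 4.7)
The plan is that Proposition~\ref{prop4.4} is essentially a direct translation of inequality (75) in \cite[Corollary 5.4.4]{dC_SL} into the notation of this paper, so the strategy is to mimic the proof of Proposition~\ref{prop4.3} word-for-word with the linear system $|\beta|$ on the del Pezzo surface replaced by the Hitchin base $B=\bigoplus_i H^0(C,\CO(iD))$, and ``integral curves in $|\beta_i|$'' replaced by integral spectral curves in $\mathrm{Tot}(\CO_C(D))$ of degree $n_i$ over the zero section $C$. Once the combinatorial setup is in place, the only analytic inputs used are the Severi-type inequality of Lemma~\ref{lem4.1} and the explicit formula for $\Phi_n$ given by the dimension counts of \cite[(77)]{dC_SL}.

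First, I would introduce the factorwise data: for each $n_i$ appearing in $(n_\bullet,m_\bullet)$, let
\[
B_{n_i}=\bigoplus_{j=1}^{n_i}H^0(C,\CO(jD))
\]
be the rank-$n_i$ Hitchin base, and let $|n_i|^\circ \subset B_{n_i}$ denote the open locus of integral spectral curves (non-empty since $\mathrm{deg}(D)>2g-2$). Let $\pi^\circ_{n_i}:\CC^\circ_{n_i}\to |n_i|^\circ$ be the universal integral spectral curve and $\mathrm{Pic}_{n_i}\to |n_i|^\circ$ its relative degree-$0$ Picard scheme. The assignment $(E_i)\mapsto \sum_i m_iE_i$ defines a finite morphism
\[
\lambda: \prod_{i=1}^s B_{n_i}\longrightarrow B
\]
whose image is $\overline{B_{n_\bullet,m_\bullet}}$ and which sends $\prod_i |n_i|^\circ$ onto $B_{n_\bullet,m_\bullet}$.

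Next, I would carry out the analog of the main calculation in the proof of Proposition~\ref{prop4.3}. Let $\eta\in B$ be the generic point of $Z$. By the argument of the first two paragraphs of \cite[Proof of Corollary 5.4.4]{dC_SL}, one can choose a preimage $\eta'=(\eta_1,\dots,\eta_s)\in \lambda^{-1}(\eta)$ with $\eta_i\in |n_i|^\circ$ such that $\mathrm{dim}\overline{\{\eta\}}=\mathrm{dim}\overline{\{\eta'\}}=\mathrm{dim}Z$ and the abelian parts of the Chevalley decompositions split additively:
\[
\mathrm{dim}(\mathrm{Pic}_{n,\eta})^{\mathrm{ab}} = \sum_{i=1}^s \mathrm{dim}(\mathrm{Pic}_{n_i,\eta_i})^{\mathrm{ab}}.
\]
Combining this with the identity $\mathrm{dim}(\mathrm{Pic}_{n,\eta})^{\mathrm{ab}}=\Phi_n+\mathrm{dim}B-\delta_Z$ (and the analog for each factor) gives the relation
\[
\delta_Z + \sum_{i=1}^s \Phi_{n_i} - \Phi_n = \mathrm{dim}B - \sum_{i=1}^s \mathrm{dim}B_{n_i} + \sum_{i=1}^s \delta_{\overline{\{\eta_i\}}},
\]
which is the spectral-curve counterpart of (\ref{eqn44}). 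Applying Lemma~\ref{lem4.1} to each family $\pi^\circ_{n_i}$ then yields $\delta_{\overline{\{\eta_i\}}}\leq \mathrm{dim}B_{n_i}-\mathrm{dim}\overline{\{\eta_i\}}$; summing and using $\sum_i \mathrm{dim}\overline{\{\eta_i\}}=\mathrm{dim}Z$ produces $\Phi_n+\mathrm{codim}Z\geq \sum_i \Phi_{n_i}+\delta_Z$.

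The main obstacle is checking the hypothesis of Lemma~\ref{lem4.1}, namely that the compactified Jacobian $\overline{\CJ\CC}_{\CC^\circ_{n_i}}$ of the universal integral spectral curve is nonsingular. This should follow because, via the spectral correspondence of Beauville--Narasimhan--Ramanan, $\overline{\CJ\CC}_{\CC^\circ_{n_i}}$ embeds as an open subvariety of the stable locus of $\widetilde{M}_{n_i,\chi_i}$ for an appropriate choice of $\chi_i$, whose smoothness is guaranteed by Proposition~\ref{prop2.9}(2). The second delicate step is the existence of the lift $\eta'$ with the additive splitting of abelian parts; this is not proved from scratch here but transferred directly from \cite[\S 5.4]{dC_SL}, where it is handled using the standard exact sequence relating the generalized Jacobian of a reducible spectral curve to the Jacobians of the normalizations of its irreducible components.
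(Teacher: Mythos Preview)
Your proposal is correct and coincides with the paper's approach. The paper does not give an independent proof of Proposition~\ref{prop4.4}: it simply states that the inequality is a rewriting of inequality~(75) of \cite[Corollary 5.4.4]{dC_SL} (noting that the coprime assumption there is irrelevant since the statement only concerns the group scheme $P$), and your write-up is precisely that argument transcribed into the present notation, following the same template the paper uses for Proposition~\ref{prop4.3}.
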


\subsection{Proof of Theorem \ref{thm0.2}}
We complete the proof of Theorem \ref{thm0.2} in this section. Let 
\[
h: M^L_{\beta,\chi} \rightarrow B
\]
be the morphism (\ref{0.2_1}). By Lemma \ref{lem2.5}, the open subvariety of stable sheaves 
\[
M^{L,s}_{\beta,\chi} \subset M^L_{\beta,\chi}
\]
is nonsingular. So we have 
\[
\left(\mathrm{IC}_{M^{L}_{\beta,\chi}}\right)\big{|}_{M^{L,s}_{\beta,\chi}} = \BQ[\mathrm{dim}M^{L}_{\beta,\chi}].
\]
In particular, the restriction of the direct image complex $Rh_*\mathrm{IC}_{M^{L}_{\beta,\chi}}$ to the open subset $U\subset B$ of nonsingular curves in $|\beta|$ satisfies
\begin{equation}\label{eqn47}
Rh_*\mathrm{IC}_{M^{L}_{\beta,\chi}}\big{|}_U \simeq  \bigoplus_{i=0}^{2d}\wedge^i R^1\pi_* \BQ[\mathrm{dim}M^{L}_{\beta,\chi}-i].
\end{equation}
Here $\pi: \CC \to U \subset B$, and (\ref{eqn47}) is an isomorphism of variations of Hodge structures by Proposition \ref{prop2.2}. Hence, in order to prove Theorem \ref{thm0.2}, it suffices to show that the lefthand side of (\ref{0.2_3}), as a bounded complex of perverse sheaves, has full support $B$.

Assume that the irreducible subvariety $Z \subset B$ is a support whose general point has type $\underline{\beta}$. We combine the inequalities of Proposition \ref{prop4.3} and Theorem \ref{thm3.1}:
\[
\Phi_\beta +\mathrm{codim}Z \geq \sum_{i=1}^s \Phi_{\beta_i}+\delta_Z \geq \sum_{i=1}^s \Phi_{\beta_i} + \mathrm{codim}Z,
\]
which implies $\Phi_\beta \geq \sum_{i=1}^s \Phi_{\beta_i}$. Therefore we obtain that 
\begin{equation}\label{eqn233}
1-s \geq (\beta -\sum_i \beta_i)\cdot(-K_S). 
\end{equation}
Since $-K_S$ is ample and $\beta -\sum_i \beta_i \geq 0$, the only possibility for (\ref{eqn233}) to hold is $s=1$ and $m_i=1$. Equivalently, we have $Z= B$. This complete the proof of Theorem \ref{thm0.2} for $M^L_{\beta,\chi}$.

The proof for $\widetilde{M}_{n,\chi}$ is completely identical where we apply Theorem \ref{thm3.1} and Proposition \ref{prop4.4}. \qed

%\begin{prop}
%The following two categories are naturally equivalent:
%\begin{enumerate}
%    \item[(1)] The category of pure 1-dimensional sheaves on $S$ whose reduced support is $C$.
%    \item[(2)] The category of the Higgs bundles $(\CE, \theta)$ 
%\end{enumerate}
%\end{prop}

\end{document}